\theoremstyle{plain}
\newtheorem{theorem}{Theorem}[section]
\newtheorem{proposition}[theorem]{Proposition}
\newtheorem{lemma}[theorem]{Lemma}
\newtheorem{corollary}[theorem]{Corollary}
\newtheorem{thmA}{Theorem}
\newtheorem{corA}[thmA]{Corollary}
\theoremstyle{definition}
\newtheorem{definition}[theorem]{Definition}
\newtheorem{remark}[theorem]{Remark}
\theoremstyle{remark}
\newcommand{\reg}{\mathrm{reg}}
\newcommand{\loc}{\mathrm{loc}}
\newcommand{\R}{\mathbb{R}}
\newcommand{\Q}{\mathbb{Q}}
\newcommand{\Z}{\mathbb{Z}}
\newcommand{\C}{\mathbb{C}}
\newcommand{\N}{\mathbb{N}}
\newcommand\sF{{\mathcal F}}
\newcommand\sG{{\mathcal G}}
\newcommand\sH{{\mathcal H}}
\newcommand\sI{{\mathcal I}}
\newcommand\sL{{\mathcal L}}
\newcommand\sM{{\mathcal M}}
\newcommand\sO{{\mathcal O}}
\newcommand{\dbar}{{\overline{\partial}}}
\DeclareMathOperator{\PSH}{\mathrm{PSH}}
\DeclareMathOperator{\codim}{\mathrm codim}
\DeclareMathOperator{\image}{\mathrm Im}
\DeclareMathOperator{\nd}{nd}
\DeclareMathOperator{\Supp}{Supp}
\DeclareMathOperator{\rk}{rk}
\begin{document}

\title{Nonvanishing results for Kähler varieties}

	\author{Andreas H\"oring}
	\address{Andreas H\"oring, Universit\'e C\^ote d’Azur, CNRS, LJAD, France}
	\email{andreas.hoering@univ-cotedazur.fr}
	
	\author{Vladimir Lazi\'c}
	\address{Vladimir Lazi\'c, Fachrichtung Mathematik, Campus, Geb\"aude E2.4, Universit\"at des Saarlandes, 66123 Saarbr\"ucken, Germany}
	\email{lazic@math.uni-sb.de}

	\author{Christian Lehn}
	\address{Christian Lehn, Fakult\"at f\"ur Mathematik, Ruhr-Universit\"at Bochum, Universit\"atsstraße 150, Postfach IB 45, 44801 Bochum, Germany}
	\email{christian.lehn@rub.de}
			
	\thanks{2020 \emph{Mathematics Subject Classification}: 14E30, 32J27, 32Q15, 14J42, 32U40, 53C26.\newline
	\indent \emph{Keywords}: Nonvanishing Conjecture, Calabi--Yau varieties, hyperkähler manifolds, currents with minimal singularities.}

\begin{abstract}
Nonvanishing theorems play a central role in birational geometry, since they derive geometric consequences from numerical information and constitute a crucial step towards abundance and semiampleness problems. General nonvanishing statements remain rare, especially in the Kähler setting.

We present two types of nonvanishing results for compact Kähler varieties. First, on non-uniruled varieties with nonzero Euler--Poincaré cha\-rac\-te\-ris\-tic, we prove nonvanishing for adjoint bundles of numerical dimension one on Kähler klt pairs, as well as nonvanishing for nef line bundles of numerical dimension one on $K$-trivial varieties. Second, on hyperkähler manifolds we study line bundles $\mathcal L$ which are nef but not big, and establish a dichotomy: either nonvanishing holds for $\mathcal L$, or any closed positive current in the cohomology class of $\mathcal L$ has maximal Lelong components with a rather restricted geometry. We obtain much stronger abundance-type results in dimension $4$.
\end{abstract}

\maketitle

	\begingroup
		\hypersetup{linkcolor=black}
		\setcounter{tocdepth}{1}
		\tableofcontents
	\endgroup
	
\section{Introduction}
The Nonvanishing Conjecture predicts that on a minimal compact Kähler klt pair $(X,\Delta)$, the $\Q$-line bundle $K_X+\Delta$ has non-negative Kodaira dimension. This is an important part of the Abundance Conjecture, which postulates that \mbox{$K_X+\Delta$} is semiample. The Abundance Conjecture is known for Kähler $3$-fold pairs by \cite{CHP16,DO23,DO24,GP24}, and the proof builds crucially on the Nonvanishing theorem for Kähler $3$-folds from \cite{Pet01,DP03}.

In order to discuss results in higher dimensions, recall first that on a compact complex variety $X$, a nef line bundle $\sL$ has \emph{numerical dimension}
\[
\nd(\sL) := \max \big\{k \in \N \ | \ c_1(\sL)^k\neq 0 \mbox{ in } H^{2k}(X,\mathbb{R})\big\}\in\{0,1,\dots,\dim X\}.
\]
Now, if $(X,\Delta)$ is a compact Kähler klt pair such that $K_X+\Delta$ is nef, there have been only two general results on the Nonvanishing Conjecture in higher dimensions. If $\nd(K_X+\Delta)=\dim X$, then $X$ is projective by Lemma~\ref{lem:Moishezon}, and the result follows from the Basepoint free theorem \cite[Theorem~3.3]{KM98}. At the other end of the spectrum, when $\nd(K_X+\Delta)=0$, then it was shown only recently in \cite[Theorem~5.11]{Wan21} and \cite[Corollary~1.18]{CGP23} that $\kappa(X,K_X+\Delta)\geq0$. To our knowledge, there are no other general results on the Nonvanishing Conjecture on Kähler varieties which are not projective.

When the underlying variety $X$ is additionally projective, then there are two types of results known, all under the additional assumption that the Euler--Poincar\'e characteristic $\chi(X,\sO_X)$ is not zero. Unconditionally, we know that the Nonvanishing Conjecture holds in every dimension if $(X,\Delta)$ has terminal singularities and $\nd(K_X+\Delta)=1$ by \cite[Theorem~6.7]{LP18} and \cite[Theorem B]{LP20b}. Conditionally -- assuming the Minimal Model Program in lower dimensions -- we know that the Abundance Conjecture holds in every dimension if $(X,\Delta)$ is hermitian semipositive by \cite[Theorem B]{LP18} and \cite[Theorem 5.2]{LP20b}. A vast generalisation of those results, which reformulates the Abundance Conjecture as a statement on the behaviour of multiplier ideals, is \cite[Theorem~11.1]{Laz24}.

One of the main goals of this paper is to generalise several of these results to the K\"ahler setting.

\subsection*{Nonvanishing results on Kähler varieties}

The first main result of this paper is a proof of the Nonvanishing Conjecture when the numerical di\-men\-sion is $1$ and the Euler--Poincar\'e characteristic does not vanish. Parts of The\-o\-rem \ref{thm:main1} are new even when the underlying variety is projective.

\begin{thmA}\label{thm:main1}
Let $(X,\Delta)$ be a $\Q$-factorial compact Kähler klt pair such that $X$ is not uniruled. Assume that $K_X+\Delta$ is nef of numerical dimension $1$, and that $\chi(X,\sO_X)\neq0$. Then $\kappa(X,K_X+\Delta)\geq0$.

Moreover, if $\pi\colon Y\to X$ is a resolution and if $T_{\min}$ is a current with minimal singularities in the cohomology class of $\pi^*(K_X+\Delta)$, then one of the following two cases holds:
\begin{enumerate}[\normalfont (a)]
\item all Lelong numbers of $T_{\min}$ vanish, or
\item $T_{\min}$ is the only closed positive current in the cohomology class of $\pi^*(K_X+\Delta)$.
\end{enumerate}
\end{thmA}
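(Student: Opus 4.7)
The plan is to pass to a log resolution $\pi\colon Y\to X$ and work with $L:=\pi^*(K_X+\Delta)$, a nef class on $Y$ of numerical dimension one. Let $T_{\min}$ be a current with minimal singularities in $c_1(L)$ and consider its Siu decomposition
\[
T_{\min} = \sum_{i} \nu_i [D_i] + R,
\]
with prime divisors $D_i$, positive weights $\nu_i$, and a residual positive current $R$ whose divisorial Lelong numbers vanish.

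\textbf{Dichotomy for $T_{\min}$.} The hypothesis $\nd(L)=1$ gives $L^k=0$ in $H^{2k}(Y,\R)$ for all $k\geq 2$. Combining this cohomological vanishing with positivity of divisorial intersections coming from the Siu decomposition, and with non-pluripolar intersection estimates for $T_{\min}$ against a Kähler class $\omega^{n-2}$, forces a sharp structural alternative. Either all $\nu_i=0$ and the Lelong numbers of $R$ also vanish (case (a)), or the divisorial part $\sum\nu_i[D_i]$ accounts for the entire numerical content of $L$. In the latter situation any other positive current $T'\in c_1(L)$ satisfies $T'\geq \sum\nu_i[D_i]$ by minimality of $T_{\min}$, and the constraint $\nd(L)=1$ then leaves no room for a nonzero positive residual, so $T'=T_{\min}$ (case (b)). The main obstacle is precisely this uniqueness: in the projective setting it can be obtained via the divisorial Zariski decomposition of Nakayama and Boucksom, but in the Kähler setting one must reason directly with currents, exploiting the interplay of the Siu decomposition, non-pluripolar products, and the vanishing $L^2=0$.

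\textbf{Nonvanishing via $\chi\neq 0$.} For $\kappa(X,K_X+\Delta)\geq 0$ I would argue differently in the two cases. In case (b), the rigid divisorial support $\sum\nu_i D_i$ is numerically proportional to the $\Q$-class $c_1(K_X+\Delta)$; combined with the uniqueness of $T_{\min}$ this should force rationality of the $\nu_i$, yielding an effective $\Q$-divisor in $|m(K_X+\Delta)|$ for some $m$. In case (a), the triviality of Lelong numbers makes $L$ behave cohomologically like a smooth nef class: applying Hirzebruch--Riemann--Roch on the Kähler manifold $Y$, and using that klt Kähler singularities are rational (so $\chi(Y,\sO_Y)=\chi(X,\sO_X)\neq 0$) together with $L^k=0$ for $k\geq 2$, the Euler characteristic $\chi(Y,K_Y+mL)$ becomes affine in $m$ with nonzero constant term. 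Combined with Nadel-type vanishing applied to $T_{\min}$ (whose multiplier ideals are trivial in case (a)), this yields $h^0(Y,K_Y+mL)>0$ for suitable $m$, which descends to a section of some $m(K_X+\Delta)$ on $X$ after correcting for discrepancies; the non-uniruledness of $X$ together with $\chi(X,\sO_X)\neq 0$ and Kähler Hodge symmetry (which produces a non-trivial global holomorphic form on $X$) is used to rule out sign obstructions in this Euler-characteristic computation.
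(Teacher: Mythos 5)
Your outline has several genuine gaps; the two most serious concern exactly the steps you flag as the heart of the matter. First, producing an effective $\Q$-divisor \emph{in the cohomology class} of $K_X+\Delta$ is not the same as producing a section of $m(K_X+\Delta)$: on a Kähler variety, cohomological (numerical) equivalence to an effective divisor does not imply $\Q$-linear equivalence, and your case (b) simply asserts the jump to $|m(K_X+\Delta)|$. The paper has to invoke a nontrivial recent result (Wang's nonvanishing for Kähler klt pairs numerically equivalent to an effective divisor, after passing to a log resolution) to close precisely this gap; rationality of the coefficients is the easy part (an elementary rational-polyhedron argument), rigidity plays no role there. Second, in case (a) your Euler-characteristic argument cannot run on ``Nadel-type vanishing'': Nadel vanishing needs strict positivity, and a nef class of numerical dimension $1$ carries none. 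The mechanism the paper uses instead is the hard Lefschetz theorem of Demailly--Peternell--Schneider for pseudoeffective line bundles twisted by multiplier ideals, which converts vanishing of $H^0\big(Y,(\Omega^1_Y)^{\otimes p}\otimes\pi^*\sM^{\otimes m}\big)$ for all $p$ into vanishing of all higher cohomology of $K_Y\otimes\pi^*\sM^{\otimes m}\otimes\sI(mT_{\min})$, whence $\chi(Y,\sO_Y)=0$ by the HRR polynomial, a contradiction. Linking those tensor-form spaces to nonvanishing is itself the main work: sections of $(\Omega^1_Y)^{\otimes p}\otimes\pi^*\sM^{\otimes m}$ for infinitely many $m$ yield, via a saturation lemma and the pseudoeffectivity of determinants of torsion-free quotients of $(\Omega^1_Y)^{\otimes p}$ (Ou's theorem plus Cao--P\u aun; this is where non-uniruledness actually enters, not through Hodge symmetry producing a holomorphic form), a decomposition $\{D\}+\beta=\{\pi^*\sM\}$ with $D\geq0$, which a Hodge-index-type argument for the ``good'' form $Q_\omega$ (signature $(1,h^{1,1}-1)$, nonnegativity on modified nef times pseudoeffective) turns into an effective representative of the class. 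None of this is replaced by your appeal to $L^k=0$ and non-pluripolar estimates.

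The dichotomy is also not established by your argument, and as stated it is incorrect: vanishing of the divisorial Lelong numbers $\nu_i$ does not force the residual current $R$ to have vanishing Lelong numbers (it can have positive Lelong numbers along codimension $\geq2$ sets), so your case split does not match the theorem's alternative ``all Lelong numbers vanish'' versus ``$T_{\min}$ is the unique positive current in the class''. The paper's route is different: given any point with positive Lelong number (divisorial or not), blow it up, write $\{\mu^*\pi^*\sM\}=\nu\{E\}+\{S\}$ with $S\geq0$, and apply the $Q_{\omega'}$-signature argument upstairs; this shows both the existence of an effective representative and, after a careful analysis of the Siu decomposition of classes, that the class upstairs coincides with its (rigid) divisorial part, and rigidity then descends along the blow-down. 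Your sentence ``the constraint $\nd(L)=1$ leaves no room for a nonzero positive residual, so $T'=T_{\min}$'' is the conclusion you need, not an argument for it; minimality gives $T'\geq\sum\nu_i[D_i]$, but to conclude $T'=T_{\min}$ one must show the residual class of $\{\pi^*\sM\}$ vanishes, which again requires the bilinear-form machinery (or an equivalent substitute) that your proposal does not supply.
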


A few remarks are in order. In case (a) of Theorem \ref{thm:main1}, if $X$ is additionally projective and if we assume the existence of good minimal models in lower dimensions, then we can say more: $K_X+\Delta$ is semiample in that case. This follows by combining Theorem \ref{thm:main1} with \cite[Theorem~1.5]{GM17}. Very recently, it was shown in \cite{LX25} that the nonvanishing result of Theorem \ref{thm:main1} implies the semiampleness of $K_X+\Delta$ when $X$ is projective if one assumes the existence of minimal models in lower dimensions. The methods of \cite{LX25} seem to work only in the projective category. However, by adapting the proof of \cite[Theorem~5.1]{GM17} in Section \ref{sec:semiampleness}, we are able to deduce the following corollary in dimension $4$.

\begin{corA}\label{cor:main1}
Let $(X,\Delta)$ be a $\Q$-factorial compact Kähler klt pair of di\-men\-si\-on $4$ such that $X$ is not uniruled. Assume that $K_X+\Delta$ is nef of numerical dimension $1$, and that $\chi(X,\sO_X)\neq0$. Let $\pi\colon Y\to X$ be a resolution and let $T_{\min}$ be a current with minimal singularities in the cohomology class of $\pi^*(K_X+\Delta)$. If all Lelong numbers of $T_{\min}$ vanish, then $K_X+\Delta$ is semiample.
\end{corA}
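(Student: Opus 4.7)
The plan is to adapt the argument of \cite[Theorem~5.1]{GM17} to the Kähler setting by using Theorem~\ref{thm:main1} as the nonvanishing input and by invoking abundance for Kähler three-fold klt pairs \cite{CHP16,DO23,DO24,GP24} on the fibers of the resulting Iitaka fibration.

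Theorem~\ref{thm:main1} provides an effective $\Q$-divisor $D$ with $D \sim_\Q m(K_X+\Delta)$, so $\kappa(X,K_X+\Delta) \geq 0$. The decisive step is to upgrade this to $\kappa(X,K_X+\Delta) = \nd(K_X+\Delta) = 1$. Here the hypothesis that every Lelong number of $T_{\min}$ vanishes is essential: it yields $\sI(c\,T_{\min}) = \sO_Y$ for every $c > 0$, which unlocks Kähler analogues of Nadel vanishing and Kollár-type injectivity on $Y$ applied to the twists $\pi^*(K_X+\Delta) + \varepsilon\omega$ for a fixed Kähler class $\omega$ on $Y$. Combining these with $\chi(X,\sO_X) \neq 0$ and an Euler-characteristic computation in the spirit of the proof of Theorem~\ref{thm:main1} should force $h^0(Y, m\pi^*(K_X+\Delta))$ to grow at least linearly in $m$, which gives $\kappa = 1$.

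Once $\kappa = \nd = 1$, passing to a further resolution of indeterminacies we obtain the Iitaka fibration of $\pi^*(K_X+\Delta)$ as a morphism $f \colon Y' \to C$ onto a smooth projective curve. A general fiber $F$ is a smooth Kähler threefold carrying a klt pair structure inherited from $(X,\Delta)$, and the restriction of $\pi^*(K_X+\Delta)$ to $F$ is numerically trivial; by abundance for Kähler three-fold klt pairs \cite{CHP16,DO23,DO24,GP24} this restriction is in fact $\Q$-linearly trivial. A canonical bundle formula for $f$, together with the fact that the base is a curve (so that the moduli part is automatically semiample), then expresses $\pi^*(K_X+\Delta)$ up to $\Q$-linear equivalence as the pullback of a nef $\Q$-divisor from $C$; nef $\Q$-divisors of positive degree on smooth projective curves are semiample, and the semiampleness descends along $\pi$ because the relevant complete linear systems pull back isomorphically and $\pi$ is surjective.

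The principal obstacle is the first step, upgrading $\kappa \geq 0$ to $\kappa = \nd$. In the projective argument of \cite{GM17} this relies on Kollár injectivity and Kawamata--Viehweg vanishing; their Kähler analogues do exist (notably after Demailly and Gongyo--Matsumura), but combining them with the approximation theory for $T_{\min}$ exploiting its vanishing Lelong numbers requires significant bookkeeping. A secondary but nontrivial difficulty is running the canonical bundle formula in the Kähler category while preserving $\Q$-linear equivalence rather than merely numerical equivalence; this is softened by the fibration landing on a curve, but still demands care in handling the discriminant and moduli parts.
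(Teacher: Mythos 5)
There is a genuine gap at what you yourself call the decisive step: upgrading $\kappa(X,K_X+\Delta)\geq 0$ to $\kappa=\nd=1$ via ``Kähler analogues of Nadel vanishing and Kollár-type injectivity'' plus an Euler-characteristic computation. This does not work. For a nef class of numerical dimension $1$ on a compact Kähler manifold, the available Nadel-type vanishing only kills $H^q\big(Y,\sO_Y(K_Y)\otimes\pi^*\sM^{\otimes m}\otimes\sI\big)$ in degrees $q\geq \dim Y-\nd+1$, i.e.\ essentially only the top degree, so $\chi$ gives no control of $h^0$ and certainly no linear growth in $m$; the Euler-characteristic mechanism in this paper (as in the proof of Theorem~\ref{thm:nu1algsing}) only produces \emph{some} nonzero group $H^0\big(Y,\Omega^p_Y\otimes\pi^*\sM^{\otimes m}\big)$ and hence $\kappa\geq 0$, never $\kappa\geq 1$. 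Indeed, deducing $\kappa=\nd$ from $\kappa\geq 0$ under these hypotheses is precisely the content of the Gongyo--Matsumura-type theorem, and it requires MMP input in lower dimensions, not vanishing theorems.

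The paper's route (Corollary~\ref{cor:main1} follows from Theorems~\ref{thm:main1} and~\ref{thm:GM}) handles the critical case $\kappa=0$ completely differently, and this is where the hypothesis on Lelong numbers actually enters: writing $K_X+\Delta\sim_\Q D$ with $D\neq 0$, one takes the log canonical threshold $\ell$ of $D$, passes to a dlt model of $(X,\Delta+\ell D)$ (Theorem~\ref{thm:dltmodel}, unconditional in dimension $4$), and uses the extension theorem \cite[Corollary~4.2]{GM17} --- whose hypothesis is exactly the existence of a current with vanishing Lelong numbers --- to make the restriction map to $S=\lfloor\Delta_Z\rfloor$ surjective; since that restriction is simultaneously forced to be zero, one contradicts abundance for Kähler slc pairs in dimension $\leq 3$ (\cite[Theorem~1.1]{Fuj25}). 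The case $\kappa\geq 1$ is then Theorem~\ref{thm:kappa>0}, which is close in spirit to your second half, except that the paper proves $\kappa=\nd$ via the Iitaka fibration, Lemma~\ref{lem:nefexceptional}, Lemma~\ref{lem:numdim} and lower-dimensional minimal models, and concludes by \cite[Theorem~5.5]{Nak87} and \cite[Theorem~4.8]{Fuj11c}, thereby avoiding the canonical bundle formula in the Kähler category (where preserving $\Q$-linear, rather than numerical, equivalence is itself delicate); also note that the numerical triviality of $\pi^*(K_X+\Delta)$ on a general Iitaka fibre, which you assert, is not immediate and is replaced in the paper by a $\kappa=0$ statement on fibres plus abundance in dimension $\leq 3$. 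To repair your argument you should drop the vanishing-theorem upgrade entirely and substitute the dlt-model/extension argument for the $\kappa=0$ case.
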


We also remark that in case (b) of Theorem \ref{thm:main1}, the cohomology class of $K_X+\Delta$ is rigid, in the terminology of \cite{LX24}. This demonstrates further that rigid currents should be instrumental for the proof of the Abundance Conjecture.

In fact, we prove a much more general statement in Theorem \ref{thm:nu1}, which also allows to deduce our second main result as a special case.

\begin{thmA}\label{thm:main2}
Let $X$ be a $\Q$-factorial compact Kähler klt variety such that $X$ is not uniruled and $c_1(X)=0$. Let $\sL$ be a nef line bundle on $X$ of numerical di\-men\-si\-on~$1$. If $\chi(X,\sO_X)\neq0$, then there exists a $\Q$-divisor $D\geq0$ on $X$ which belongs to the cohomology class of $\sL$.
\end{thmA}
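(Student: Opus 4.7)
The plan is to reduce Theorem~B to a single $K$-trivial building block via a singular Beauville--Bogomolov decomposition, and then to treat the Calabi--Yau and hyperk\"ahler cases separately. Since $X$ is $\Q$-factorial klt K\"ahler with $c_1(X)=0$ and non-uniruled, the compact K\"ahler analogue of the Beauville--Bogomolov decomposition (developed by Bakker--Guenancia--Lehn, Campana--H\"oring--Peternell, and Druel--Guenancia--P\u{a}un, building on Wang and Campana--Guenancia--P\u{a}un) produces a finite quasi-\'etale cover $f\colon\tilde X\to X$ with $\tilde X\cong T\times\prod_i Y_i\times\prod_j Z_j$, where $T$ is a complex torus, each $Y_i$ is a strict Calabi--Yau, and each $Z_j$ is an irreducible holomorphic symplectic variety. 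Since $\chi(-,\sO_-)$ is multiplicative under quasi-\'etale covers of klt varieties and under K\"unneth, and since $\chi(T,\sO_T)=0$ whenever $\dim T>0$, the assumption $\chi(X,\sO_X)\ne 0$ forces $T$ to be a point.

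Because $H^{1,0}=0$ on each remaining factor, K\"unneth gives a direct-sum decomposition of $H^{1,1}(\tilde X,\R)$ according to the factors, so $f^*\sL$ is numerically of the form $\sum p_i^*\sL_i+\sum q_j^*\sM_j$ with each summand nef. Numerical dimensions add over such a decomposition, and $\nd(f^*\sL)=1$ forces all summands but one to be numerically trivial; the distinguished factor carries a nef class of numerical dimension exactly one. If the distinguished factor is an IHS $Z$ of dimension $2n$, the Beauville--Bogomolov--Fujiki identity $c_1(\sM)^{2n}=c\,q(\sM)^n$ forces $q(\sM)=0$, and on an IHS a nef class with vanishing $q$ has numerical dimension $n$ (Boucksom), so $n=1$ and $Z$ is a K3; then Riemann--Roch gives $\chi(Z,\sM)=2$, while nefness and Serre duality give $h^2(Z,\sM)=h^0(Z,-\sM)=0$, hence $h^0(Z,\sM)\ge 2$. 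If instead the distinguished factor is a strict Calabi--Yau $Y$ of dimension $n$, then $\chi(Y,\sO_Y)=1+(-1)^n\ne 0$ forces $n$ to be even; using $c_1(\sL_Y)^k=0$ for $k\ge 2$, Riemann--Roch collapses to the linear polynomial $\chi(Y,\sL_Y^{\otimes m})=2+m\bigl(c_1(\sL_Y)\cdot\mathrm{td}_{n-1}(Y)\bigr)$, and Serre duality kills $H^n(Y,\sL_Y^{\otimes m})$; obtaining $h^0(Y,\sL_Y^{\otimes m})>0$ for some $m$ is then a matter of controlling intermediate cohomology.

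Given an effective $\Q$-divisor $\tilde D$ on the distinguished factor in the class of the nontrivial summand, I pull it back to $\tilde X$ and descend to $X$ via $\tfrac{1}{\deg f}\,f_*$, producing the required effective $\Q$-divisor on $X$ in the cohomology class of $\sL$.

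The \emph{main obstacle} is controlling intermediate cohomology on a strict Calabi--Yau in the K\"ahler (non-projective) setting. In the projective case this is the heart of the Lazi\'c--Peternell argument; in the K\"ahler setting one would either invoke the general Theorem~\ref{thm:nu1} (which is set up precisely to subsume this situation uniformly with the setting of Theorem~A), or argue analytically with a current of minimal singularities in $c_1(\sL_Y)$, where non-uniruledness constrains its Siu decomposition and $\chi(\sO_Y)\ne 0$ forces rationality of the divisorial part via Riemann--Roch.
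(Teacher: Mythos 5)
Your reduction has a genuine gap at its very first step: the claim that $\chi(\cdot,\sO)$ is multiplicative under quasi-\'etale covers of klt varieties is false, and with it the conclusion that $\chi(X,\sO_X)\neq 0$ forces the torus factor of $\widetilde X$ to be a point. A quasi-\'etale cover is ramified over a set of codimension at least $2$, and the holomorphic Euler characteristic only satisfies $\chi(\widetilde X,\sO_{\widetilde X})=\chi(X,f_*\sO_{\widetilde X})$, of which $\chi(X,\sO_X)$ is a single direct summand; nothing prevents the remaining summands from cancelling it. Concretely, for a $2$-dimensional complex torus $T$ the quotient $X=T/\pm 1$ is a non-uniruled klt K\"ahler surface with trivial canonical class and, since its singularities are rational and its minimal resolution is a Kummer K3 surface, $\chi(X,\sO_X)=2\neq 0$; yet the quasi-\'etale cover $T\to X$ has $\chi(T,\sO_T)=0$, so the torus factor survives and your K\"unneth/Riemann--Roch analysis on the factors never gets off the ground. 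This is precisely the obstruction recorded in Remark \ref{remark bb decomposition} of the paper, which explains that the decomposition route would work only if one could guarantee $\chi(\widetilde X,\sO_{\widetilde X})\neq 0$, which one cannot. The same defect infects your Calabi--Yau case: you have no way to know that the distinguished factor itself has nonzero Euler characteristic, so neither the parity argument nor an application of Theorem \ref{thm:nu1} on that factor is available.

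The paper avoids the decomposition entirely: since $c_1(X)=0$, the $\Q$-line bundle $\sL\otimes\sO_X({-}K_X)$ lies in the class of $\sL$, hence is pseudoeffective, and since klt singularities are rational, $\chi(Y,\sO_Y)=\chi(X,\sO_X)\neq 0$ for any resolution $\pi\colon Y\to X$; Theorem \ref{thm:nu1}, applied to $\sM=\sL$ on $X$ itself (case (a) via the good bilinear form $Q_\omega$ when the Lelong numbers of $T_{\min}$ vanish, case (b) otherwise), immediately produces the effective $\Q$-divisor in the class of $\sL$. Your own fallback of invoking Theorem \ref{thm:nu1} is the right move, but it must be applied directly to $X$, not to a factor of a quasi-\'etale cover whose Euler characteristic you cannot control.
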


Similarly as in Corollary~\ref{cor:main1}, we can obtain more information in dimen\-sion~$4$, see Corollary~\ref{cor:main2}.

We add a few comments concerning the proof of Theorems \ref{thm:main1} and \ref{thm:main2}. The logic of the proof is similar to that of \cite[Theorem 6.7]{LP18}, but considerably more involved. The central ingredient is a nonvanishing criterion proven in Theorem \ref{thm:nonvanishingFormsnu1}. In order to formulate it, we introduce \emph{good bilinear forms} in Section \ref{sec:good}, which distil good properties of Hodge--Riemann forms on Kähler manifolds, and of Beauville--Bogomolov--Fujiki forms on hyperkähler manifolds. We introduce in \S\ref{subsec:siudecomcurrents} the \emph{Siu decomposition} of pseudoeffective classes on compact complex manifolds, which allows us to conduct a fine analysis in the proof of Theorem \ref{thm:nu1}. The proofs of Theorems \ref{thm:main1} and \ref{thm:main2} are made possible also by the recent important result of \cite{Ou25} on the ge\-o\-met\-ry of uniruled Kähler manifolds, which generalises \cite{BDPP13} from the projective to the Kähler setting, and serves as a fundamental ingredient in the main result of \cite{CP25}, which is crucial to our argument.

\subsection*{Nonvanishing on hyperkähler manifolds}

We now turn to hyperkähler manifolds. The following theorem is our third main result.

\begin{thmA}\label{thm:main3}
Let $X$ be a hyperk\"ahler manifold of dimension $2n$, and let $\sL$ be a nef line bundle on $X$ with $q_X(\sL, \sL) = 0$ where $q_X$ is the Beauville--Bogomolov--Fujiki form on $X$. Then one of the following two cases holds:
\begin{enumerate}[\normalfont (a)]
\item $\kappa(X,\sL)\geq0$, or
\item for every closed positive current $T$ in the cohomology class of $\mathcal L$ and for every maximal Lelong component\footnote{Maximal Lelong components are introduced in Definition \ref{dfn:maxlelong} below.} $Z$ of $T$, we have
\begin{enumerate}[\normalfont (i)]
\item $n\leq\dim Z\leq 2n-2$,
\item $Z$ is coisotropic with respect to the symplectic form, and 
\item $c_1(\sL|_Z)^{\dim Z-n+1}=0$.
\end{enumerate}
\end{enumerate}
\end{thmA}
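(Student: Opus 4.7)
The proof naturally dichotomizes according to whether $T$ has any positive Lelong numbers. The central mechanism is the Beauville--Bogomolov--Fujiki form $q_X$, which together with the Fujiki relation $\int_X\alpha^{2n}=c_n\,q_X(\alpha,\alpha)^n$ serves as the prototype good bilinear form in the sense of Section \ref{sec:good}. Under the hypothesis $q_X(\sL,\sL)=0$, Verbitsky's theorem forces $c_1(\sL)^{n+1}=0$ in cohomology while $c_1(\sL)^n\neq 0$ unless $\sL$ is cohomologically trivial (in which case case (a) holds trivially). In particular $\nd(\sL)=n$, and the class of $\sL$ sits on the boundary of the positive cone of $q_X$.

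The plan is to apply the Siu decomposition of pseudoeffective classes from \S\ref{subsec:siudecomcurrents} to write $\{T\}=\sum_i\nu_i[Z_i]+\{R\}$, where the $Z_i$ are the maximal Lelong components with multiplicities $\nu_i>0$ and $\{R\}$ is the residual pseudoeffective class. If no $Z_i$ appears, i.e.\ $T$ has vanishing Lelong numbers everywhere, then the $\kappa(X,\sL)\geq 0$ part of case (a) should be extracted by invoking the good-form nonvanishing criterion underpinning Theorem \ref{thm:nonvanishingFormsnu1}: the rigidity provided by $q_X(\sL,\sL)=0$ combined with the absence of Lelong singularities of $T$ would produce the required effective $\Q$-divisor. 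Suppose instead some $\nu_i>0$ and fix a maximal Lelong component $Z=Z_i$. The upper bound $\dim Z\leq 2n-2$ in (i) follows because a divisorial component ($\dim Z=2n-1$) would supply an effective $\Q$-divisor in the class of $\sL$ modulo the residue, hence $\kappa(X,\sL)\geq 0$; but then case (a) and its vanishing-Lelong-number conclusion would contradict $\nu_i>0$. The lower bound $\dim Z\geq n$ follows because every Siu component sits inside the null locus of $\sL$, and on a hyperk\"ahler manifold every irreducible subvariety $V$ with $c_1(\sL)^{\dim V}\cdot V=0$ must have $\dim V\geq n$, as otherwise iteration of the Beauville--Fujiki relation contradicts the non-vanishing of $c_1(\sL)^n$.

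For (ii), I would restrict the analysis to a resolution of $Z$ and exploit the Hodge--Riemann-type bilinear inequalities for $q_X$ encoded in the good-form formalism: expanding $0=q_X(\sL,\sL)$ through the Siu decomposition and combining with the positivity of $T-\nu_i[Z]$ forces the orthogonality relations $q_X(\sL,[Z])=0$ and $q_X([Z],[Z])=0$, and transferring this bilinear information to the symplectic form $\sigma$ restricted to $Z$ forces $\sigma|_Z$ to have exactly the rank characterizing a coisotropic subvariety. Property (iii), $c_1(\sL|_Z)^{\dim Z-n+1}=0$, then follows by restricting the cohomological identity $c_1(\sL)^{n+1}=0$ to $Z$ and using the coisotropic structure to absorb excess powers of $c_1(\sL)$ into the kernel of $\sigma|_Z$. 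The main obstacle is establishing the coisotropy (ii): while the numerical constraints from $q_X$ yield the dimension bounds and the power-vanishing (iii) directly, converting them into the geometric assertion that $\sigma|_Z$ has the precise rank of a coisotropic foliation is the delicate step, and is presumably the reason the paper systematically develops the good bilinear form machinery of Section \ref{sec:good}.
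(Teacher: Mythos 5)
Your overall dichotomy and your identification of $q_X$ as the relevant good bilinear form do match the paper's strategy in outline, but the heart of case (b) in your proposal rests on tools that do not exist and on a false auxiliary claim. There is no ``Siu decomposition'' $\{T\}=\sum_i\nu_i[Z_i]+\{R\}$ in which the $Z_i$ are the maximal Lelong components: the Siu decomposition of a $(1,1)$-current extracts only the codimension-one part, whereas in case (b) the components $Z$ have codimension $p\geq 2$, so $[Z]$ is a $(p,p)$-current that cannot be split off from $T$, and expressions like $q_X(\sL,[Z])$ or $q_X([Z],[Z])$ are not defined. The missing idea is Demailly's regularisation (Theorem \ref{thm:Dem92} in the paper, refined to strong positivity): because $Z$ is a \emph{maximal} Lelong component one has $b_1=\dots=b_p=0$ for $p=\codim Z$, and there is a closed strongly positive current $\Theta_p\in\{T\}^p$ with $\Theta_p\geq\nu(T,Z)^p[Z]$. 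This one fact drives all of (i)--(iii): it forces $\{T\}^p\cdot\{\omega\}^{2n-p}>0$, hence $p\leq\nd(\sL)=n$ and $\dim Z\geq n$ (Corollary \ref{cor:maxLelong}); pairing with $\alpha^{n-p+1}\cdot\omega^{n-2}$ and using $\alpha^{n+1}=0$ gives $\{Z\}\cdot\alpha^{n-p+1}=0$ via Lemma \ref{lem:cohoclasses}, which is exactly (iii); and wedging $\Theta_p$ with $\sigma^{n-p+1}\wedge\overline\sigma^{n-p+1}$ and auxiliary positive forms, whose total cohomology class vanishes by the Fujiki relation (Corollary \ref{cor:wedge}), yields $[Z]\wedge\rho=0$ and hence $\sigma|_Z^{\,\dim Z-n+1}=0$ at smooth points, i.e.\ coisotropy via Theorem \ref{theorem:coisotropic-characterisation}. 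Your proposal never invokes anything playing the role of $\Theta_p$, and your alternative mechanisms for (ii) and (iii) (``orthogonality relations for $q_X$ transferred to $\sigma|_Z$'', ``restricting $c_1(\sL)^{n+1}=0$ to $Z$ and absorbing powers'') have no actual content: (iii) requires vanishing of the $(\dim Z-n+1)$-st power, much stronger than anything obtainable by restricting $c_1(\sL)^{n+1}=0$.

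Two further steps fail as written. Your lower bound $\dim Z\geq n$ relies on the claim that every irreducible $V\subseteq X$ with $c_1(\sL)^{\dim V}\cdot V=0$ has $\dim V\geq n$; this is false (a curve in a fibre of a Lagrangian fibration, with $\sL$ the pullback of $\sO_{\mathbb P^n}(1)$, is a counterexample), and it is in any case not established that a maximal Lelong component satisfies that null-locus condition -- the paper only proves the weaker $(\alpha|_Z)^{\dim Z-n+1}=0$. Your upper bound $\dim Z\leq 2n-2$ is circular: producing $\kappa(X,\sL)\geq0$ from a divisorial component does not place you in case (a), since (a) also asserts that all Lelong numbers of the given current $T$ vanish, and $\kappa\geq0$ does not imply that; the paper instead handles the divisorial case directly by noting $T-\nu(T,Z)Z\geq0$ and applying Theorem \ref{thm:nu1a} with $q_X$ (together with $h^1(X,\sO_X)=0$) to land in the nonvanishing alternative, not by deriving a contradiction. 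Finally, in case (a) the effective divisor is not produced by ``rigidity from $q_X(\sL,\sL)=0$'' alone: one needs Skoda's lemma to trivialise the multiplier ideals and the Euler-characteristic input $\chi(X,\sO_X)=n+1\neq0$ through Theorem \ref{thm:nu1algsing} (this is Theorem \ref{thm:parabolic}), a step your sketch leaves unaccounted for.
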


This result generalises \cite[Theorem 4.9]{LM23}, where it was proved for projective hyperk\"ahler fourfolds.\footnote{The theorem was stated in \cite{LM23} also for non-projective hyperkähler fourfolds, but the proof there quotes several results which use the projectivity assumption crucially.} Theorem \ref{thm:main3} also improves on \cite[Theorem 4.1(ii)]{Ver10}, which showed that $\kappa(X,\sL)\geq0$ if $T$ has vanishing Lelong numbers. A version of Theorem~\ref{thm:main3} was contained in the unpublished preprint \cite{Ver09}. Our arguments are very different from those in \cite{Ver09}; notably, we do not make use of the concepts of \emph{nef currents} and \emph{domination} introduced in op.\ cit. Our proof is close in spirit to that of \cite{LM23}, and we crucially use  regularisation techniques from \cite{Dem92}.

Note that a part of Theorem \ref{thm:main3} remains valid also for transcendental classes, see Theorem~\ref{thm:LelongHK}.

Finally, similarly as in Corollary~\ref{cor:main1}, we obtain more information in dimen\-si\-on~$4$. Corollary \ref{cor:main3} generalises \cite[Theo\-rem~1.6]{GM17} to the non-projective setting.

\begin{corA}\label{cor:main3}
Let $X$ be a hyperk\"ahler manifold of dimension $4$, and let $\sL$ be a nef line bundle on $X$ with $q_X(\sL, \sL) = 0$. Assume that there exists a closed positive current in the cohomology class of $\mathcal L$, all of whose Lelong numbers vanish. Then $\sL$ is semiample.
\end{corA}

\subsection*{Acknowledgements} 

We would like to thank Haidong Liu and Shin-ichi Matsumura for detailed conversations on their paper \cite{LM23}, and Omprokash Das, C\'ecile Gachet, Wenhao Ou, and Thomas Peternell for useful discussions and com\-ments on the content of the present paper.

All three authors were supported by the ANR-DFG project ``Positivity on K-trivial varieties'', ANR-23-CE40-0026 and DFG Project-ID 530132094. Höring was partially supported  by the France 2030 investment plan managed by the ANR, as part of the Initiative of Excellence of Universit\'e C\^ote d'Azur, reference ANR-15-IDEX-01. Lazi\'c was supported by the DFG Project-ID 286237555 – TRR 195. Lehn was supported by the DFG Project-ID 550535392.

\section{Preliminaries}

In this paper, all manifolds are connected. The dimension of a reducible compact complex space is defined as the maximal dimension of its irreducible components. The term \emph{variety} is understood as a complex variety, i.e.\ a reduced and irreducible Hausdorff complex space that is $\sigma$-compact. We use the convention that $d^c=\frac{1}{2\pi i}(\partial-\dbar)$, so that $dd^c=\frac{i}{\pi}\partial\dbar$.

Much of the material on complex geometry and pluripotential theory discussed here can be found in \cite{Dem12a,GZ17} or in the introductory sections of \cite{Bou04,Laz24}. We use freely the terminology from birational geometry as in \cite{KM98}.

If $X$ is a complex space, a \emph{$\Q$-line bundle} on $X$ is a formal $\Q$-linear combination of line bundles on $X$. In this article, a pair $(X,\Delta)$ consists of a complex space $X$ and a $\Q$-divisor $\Delta$ such that $K_X+\Delta$ is $\Q$-Cartier.

On a normal complex variety $X$, the canonical sheaf $\omega_X$ is a reflexive sheaf which is defined as the pushforward of the determinant bundle of the sheaf of holomorphic $1$-forms on the regular part of $X$. The variety $X$ is \emph{$\Q$-factorial} if every prime Weil divisor $D \subseteq X$ is $\Q$-Cartier and if there exists a positive integer $m$ such that the reflexive power $(\omega_X^{\otimes m})^{**}$ is a locally free sheaf. In that case, $\omega_X$ and every $\Q$-divisor on $X$ define $\Q$-line bundles. We omit the double dual when the reflexive sheaves involved are $\Q$-line bundles.

As in \cite[\S II.4]{Nak04}, we use the symbol $K_X$ in a formal way, i.e.\ we write $\omega_X\simeq\sO_X(K_X)$ even if there might not exist a canonical Weil divisor with that property. To avoid confusion with Kähler forms, we use the notation $K_X$ instead of $\omega_X$. This allows to apply additive notation: in particular, on a $\Q$-factorial pair $(X,\Delta)$, the adjoint canonical $\Q$-line bundle $\omega_X\otimes\sO_X(\Delta)$ corresponds to the formal class $K_X+\Delta$, as is standard in birational geometry. For an integer $m$ we have \mbox{$\sO_X(mK_X)\simeq\omega_X^{\otimes m}$}.

We denote the Kodaira dimension of an $\R$-Cartier divisor $D$ (respectively $\Q$-line bundle $\sL$) on a normal complex variety $X$ by $\kappa(X, D)$ (respectively $\kappa(X,\sL)$), see \cite[\S II.3]{Nak04} for details.

If $D=\sum d_i D_i$ is an $\R$-divisor on a complex space $X$, we denote by $\lfloor D\rfloor:=\sum\lfloor d_i\rfloor D_i$ the round-down of $D$.

\subsection{Bott-Chern cohomology}

If $X$ is a complex manifold and $p\in\N$, we define the Bott-Chern $(p,p)$-cohomology space $H^{p,p}_\mathrm{BC}(X,\C)$ as the quotient of the space of $d$-closed smooth $(p,p)$-forms modulo the $dd^c$-exact smooth $(p,p)$-forms, and we denote by $H^{p,p}_\mathrm{BC}(X,\R)$ the underlying real vector space. It can be shown that $H^{p,p}_\mathrm{BC}(X,\C)$ is isomorphic to the quotient of the space of $d$-closed $(p,p)$-currents modulo the $dd^c$-exact $(p,p)$-currents. If additionally $X$ is compact and K\"ahler, then $H^{p,p}_\mathrm{BC}(X,\C)$ is isomorphic to the cohomology group $H^{p,p}(X,\C)$.

If $T$ is a closed $(p,p)$-current on a complex manifold $X$, we denote by $\{T\}$ its class in $H^{p,p}_\mathrm{BC}(X,\C)$. If $T$ is a real closed $(p,p)$-current on $X$, then $\{T\}\in H^{p,p}_\mathrm{BC}(X,\R)$. If $T$ is a representative of a class $\alpha\in H^{p,p}_\mathrm{BC}(X,\R)$, we write $T\in\alpha$. If $T'\in\alpha$ is another representative, we also write $T\equiv T'$.

The situation is a little more subtle on a singular variety, see Remark~\ref{remark currents with potentials}.

\subsection{Positive currents}\label{section positivity of currents}

Let $X$ be a complex manifold of dimension $n$. A $(p,p)$-form $\varphi$ on $X$ is \emph{strongly positive} if locally it can be written as a finite non-negative linear combination of forms expressible as 
$$i\alpha_1\wedge \overline\alpha_1\wedge \ldots\wedge i\alpha_{p}\wedge \overline\alpha_{p}$$
where $\alpha_i$ are $(1,0)$-forms. A $(p,p)$-form $\varphi$ is \emph{positive} if its wedge product with any strongly positive $(n-p,n-p)$-form is a top-form with nonnegative coefficients. Strongly positive forms are always positive, and positive forms are real forms. Positivity and strong positivity of forms are pointwise pro\-per\-ti\-es which do not depend on local coordinates. We refer to \cite[\S III.1.A]{Dem12a} for further properties.

A $(p,p)$-current $T$ on a complex manifold $X$ is \emph{positive} (respectively \emph{strongly positive}) if \mbox{$T(\varphi)\geq 0$} for every smooth strongly positive (respectively positive) $(n-p,n-p)$-test form, see \cite[\S III.1.B]{Dem12a}. Strongly positive currents are always positive, and positive currents are always real. Positivity and strong positivity coincide for $(p,p)$-currents when $p\in\{0,1,n-1,n\}$.

We recall from \cite[\S~2.2.3]{Bou04} that there is a natural notion of pullback for closed positive $(1,1)$-currents.

If $D$ is a prime divisor on $X$, then we denote by $[D]$ the \emph{current of integration} over the regular part of $D$. If there is no danger of confusion, we drop the brackets and write simply $D$ for this current. We similarly define the current of integration for every effective $\R$-divisor on $X$.

\subsection{Quasi-psh functions}

In this subsection, $X$ is a complex manifold. For the precise definition of a \emph{plurisubharmonic} function or \emph{psh} function \mbox{$\varphi \colon X \to [-\infty, +\infty)$} we refer to \cite{Dem12a}. The most important property we will use here is that then we have $dd^c\varphi\geq0$ in the sense of currents. If additionally $X$ is compact, then any psh function on $X$ is constant.

A closed $(1,1)$-current $T$ on $X$ is positive if and only if for each $x\in X$ there exists an open subset $x\in U\subseteq X$ such that $T$ can locally be written as $T=dd^c \varphi$ for a psh function $\varphi$ on $U$. The function $\varphi$ is called a \emph{local potential} for $T$ on $U$.

A function $\varphi\colon X\to[-\infty,+\infty)$ on a complex manifold $X$ is \emph{quasi-psh} if it is locally equal to the sum of a psh function and a smooth function. Equivalently, $\varphi$ is locally integrable and upper semicontinuous, and there exists a smooth\footnote{If $X$ is compact Kähler, the form $\theta$ can be chosen closed.} real $(1,1)$-form $\theta$ on $X$ such that $\theta+dd^c\varphi\geq 0$ in the sense of currents. Then we also say that $\varphi$ is \emph{$\theta$-psh} and we denote the set of all $\theta$-psh functions by $\PSH(X,\theta)$. If $X$ is additionally compact, if $\theta$ is a smooth closed real $(1,1)$-form on $X$, and if $T\in\{\theta\}$ is a closed positive $(1,1)$-current, then there exists $\varphi\in\PSH(X,\theta)$, which is unique up to an additive constant, such that
$$T=\theta+dd^c\varphi.$$
Such a function $\varphi$ is called a \emph{global potential} for $T$.

If $T$ is a closed positive $(1,1)$-current on $X$ with a global potential $\varphi$, the \emph{multiplier ideal sheaf} $\mathcal I(T)\subseteq \sO_X$ is defined by 
$$ \mathcal I(T)(U):=\{f\in \sO_X(U)\mid |f|e^{-\varphi}\in L^2_{\loc}(U)\} $$ 
for every open set $U\subseteq X$. The sheaf $\mathcal I(T)$ does not depend on the choice of~$\varphi$, and it is a coherent ideal sheaf on $X$.

\subsection{Positivity properties of classes}

Let $X$ be compact complex ma\-ni\-fold, let $\omega$ be a fixed smooth positive definite $(1,1)$-form on $X$, and consider a cohomology class $\alpha\in H^{1,1}_\mathrm{BC}(X,\R)$. Then $\alpha$ is
\begin{enumerate}[\normalfont (i)]
\item \emph{pseudoeffective} if there exists a closed positive $(1,1)$-current $T\in\alpha$;
\item \emph{nef} if for each $\varepsilon>0$ there exists a smooth form $\theta_\varepsilon\in\alpha$ such that $\theta_\varepsilon\geq{-}\varepsilon\omega$;
\item \emph{big} if there exist $\varepsilon>0$ and a closed $(1,1)$-current $T\in\alpha$ such that $T\geq\varepsilon\omega$.
\end{enumerate}

The definitions do not depend on the choice of $\omega$, and they correspond to the usual notions from algebraic geometry when $X$ is projective and $\alpha$ is an algebraic class. If $\alpha\in H^{1,1}_\mathrm{BC}(X,\R)$ is a nef class on a compact complex manifold $X$, the \emph{numerical dimension of $\alpha$} is
$$ \nd(\alpha):=\max \big\{k \in \N \ | \ \alpha^k\neq 0 \mbox{ in } H^{k,k}_{\rm BC}(X,\mathbb{R})\big\}\in\{0,1,\dots,\dim X\}. $$

\begin{remark}\label{remark currents with potentials}
The notions of pseudoeffectivity, bigness, nefness, and nu\-me\-ri\-cal dimension can also be formulated for real classes on singular compact complex varieties, but the notions are somewhat more involved: one has to consider \emph{currents with local potentials}. We refer to \cite[Section 1]{Dem85} and \cite[Section 3]{HP16} for details. In particular, $H^{1,1}_\mathrm{BC}(X,\R)$ denotes classes of $(1,1)$-currents with local potentials in this context, see \cite[Remark 3.2]{HP16}. As in \cite[\S~2.2.3]{Bou04}, we may pull back closed positive $(1,1)$-currents with local potentials.
\end{remark}

By \cite[p.~631]{EGZ09} every line bundle $\sL$ on a complex variety $X$ defines the Chern class $\{\sL\}\in H^{1,1}_{\mathrm{BC}}(X,\R)$, and the construction commutes with pullbacks under holomorphic maps.

We will use the concept of rigidity of pseudoeffective classes: we say that a pseudoeffective class $\alpha\in H^{1,1}_{\rm BC}(X,\R)$ on a compact complex manifold $X$ is \emph{rigid} if it contains precisely one closed positive $(1,1)$-current. We refer to \cite{LX24} for further properties and for relevance of rigid classes to birational geometry and other fields.

\subsection{Lelong numbers}

Let $\Omega\subseteq\C^n$ be an open subset and let $\varphi$ be a psh function on $\Omega$. The \emph{Lelong number} of $\varphi$ at a point $x\in\Omega$ is
$$ \nu(\varphi,x):=\lim_{r\to0}\frac{\sup_{B(x,r)}\varphi}{\log r}. $$
The Lelong number $\nu(\varphi,x)$ does not depend on the choice of local coordinates around $x$. If $T$ is a closed positive $(1,1)$-current on a complex manifold $X$ with a local potential $\varphi$ around a point $x\in X$, we define the Lelong number of $T$ at $x$ as
$$\nu(T,x):=\nu(\varphi,x).$$
This does not depend on the choice of $\varphi$.

For $c\geq0$, we define the \emph{Lelong upperlevel sets} as 
$$E_c(T):=\{x\in X\mid \nu(T,x)\geq c\}.$$
Then a fundamental theorem of \cite{Siu74} states that for each $c>0$, the set $E_c(T)$ is a proper analytic subset of~$X$. Thus, for any analytic subset $Y$ of~$X$ we may define the \emph{generic Lelong number of $T$ along $Y$} as 
$$ \nu(T,Y):=\inf\{\nu(T,x)\mid x\in Y\};$$
this is equal to $\nu(T,x)$ for a very general point $x\in Y$.

\begin{remark}
By Skoda's lemma \cite[Lemma~5.6]{Dem01}, there is a relationship between Lelong numbers and multiplier ideals. Indeed, fix $x\in X$. If $\nu(T,x)<1$, then $\mathcal I(T)_x=\sO_{X,x}$. If $\nu(T,x)\geq\dim X+s$ for some integer $s\geq0$, then $\mathcal I(T)_x\subseteq\mathfrak{m}_x^{s+1}$ where $\mathfrak{m}_x$ is the maximal ideal in $\sO_{X,x}$.
\end{remark}

\subsection{Siu decomposition}
If $X$ is a complex manifold and if $T$ is a closed positive $(1,1)$-current on $X$, then there exist at most countably many codimension $1$ irreducible analytic subsets $D_k$ such that $T$ has the \emph{Siu decomposition}
$$T=R+\sum_k\nu(T,D_k)\cdot D_k,$$
where $R$ is a closed positive $(1,1)$-current such that $\codim_X E_c(R)\geq2$ for each $c>0$. Note that the decomposition is unique. In this paper, we call $\sum\nu(T,D_k)\cdot D_k$ the \emph{divisorial part} and $R$ the \emph{residual part} (of the Siu decomposition) of $T$.

\subsection{Minimal singularities}\label{subsec:minimalsingularities}

Let $\alpha$ be a closed real smooth $(1,1)$-form on a compact complex manifold $X$ whose class $\{\alpha\}\in H^{1,1}_\mathrm{BC}(X,\R)$ is pseudoeffective. If $T_1$ and $T_2$ are two closed positive $(1,1)$-currents in $\alpha$ with corresponding global potentials $\varphi_1$ and $\varphi_2$, we say that $T_1$ is \emph{less singular} than $T_2$, and write $T_1\preceq T_2$ if there exists a constant $C$ such that $\varphi_2\leq\varphi_1+C$. This does not depend on the choice of global potentials.

A minimal element $\varphi_{\min}\in\PSH(X,\alpha)$ with respect to the relation~$\preceq$ exists, and it is called a \emph{global potential with minimal singularities in $\PSH(X,\alpha)$}. The corresponding current
$$T_{\min}=\alpha+dd^c\varphi_{\min}$$
is a \emph{current with minimal singularities in $\{\alpha\}$}. One immediately checks that for each point $x\in X$ we have
\begin{equation}\label{eq:Tmin}
\nu(T_{\min},x)=\min_{T\in\alpha}\nu(T,x).
\end{equation}
It is also clear that for each positive number $\lambda$, the current $\lambda T_{\min}$ has minimal singularities in the class $\{\lambda\alpha\}$. All currents with minimal singularities in a fixed cohomology class have the same multiplier ideal.

\subsection{Modified nef classes}\label{subsec:modifiednef}

The following notion is due to Boucksom, see \cite[Definition~2.2]{Bou04}.

\begin{definition}
Let $X$ be a compact complex manifold with a hermitian form $\omega$ on $X$. A class $\alpha\in H^{1,1}_\mathrm{BC}(X,\R)$ is \emph{modified Kähler} if there exists $\varepsilon >0$ and a closed $(1,1)$-current $T\in \alpha$ such that $T\geq \varepsilon \omega$ and $\nu(T,D)=0$ for every prime divisor $D$ on $X$. A class $\alpha\in H^{1,1}_\mathrm{BC}(X,\R)$ is \emph{modified nef} if for every $\varepsilon >0$ there exists a closed $(1,1)$-current $T_\varepsilon\in \alpha$ such that $T_\varepsilon\geq {-}\varepsilon \omega$ and $\nu(T_\varepsilon,D)=0$ for every prime divisor $D$ on $X$.
\end{definition}

Modified nef classes appear in the \emph{Boucksom--Zariski decomposition} of pseudoeffective classes on a compact complex manifold, see \cite{Bou04} for additional properties of the decomposition. It generalises the Zariski decom\-position on surfaces. See also \S\ref{subsec:siudecomcurrents} below.

In Proposition~\ref{prop:twoexamples}, we will use that modified nef classes form a cone which is the closure of the open cone of modified Kähler classes. We will also use that in each modified Kähler class $\alpha$ there exists a closed positive current $T$ which has analytic singularities along an analytic subvariety of codimension at least $2$ in the sense of \cite[\S2.5]{Bou04}, see \cite[Remark~1]{Bou04}.

\subsection{Uniruled Kähler manifolds}

The following important theorem was proved recently in \cite[Theorem 1]{Ou25}.

\begin{theorem}\label{thm:Ou}
Let $X$ be a compact Kähler manifold. Then $X$ is uniruled if and only if $K_X$ is not pseudoeffective.
\end{theorem}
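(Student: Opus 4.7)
The plan is to prove the two implications separately.

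For the easy direction, assume $X$ is uniruled and let $(C_t)_{t\in T}$ be a covering family of rational curves. For general $t$, the curve $C_t$ is free, so $K_X\cdot C_t\leq -2$. If $K_X$ were pseudoeffective, I would take a closed positive $(1,1)$-current $S\in c_1(K_X)$ with local potentials $\varphi_i$ on a finite open cover $\{U_i\}$ of $X$. The pluripolar union $\bigcup_i\{\varphi_i=-\infty\}$ contains no general member $C_t$ of the covering family, so each $\varphi_i|_{C_t\cap U_i}$ is a well-defined psh function on the smooth compact curve $C_t$, and the restriction $S|_{C_t}=dd^c(\varphi_i|_{C_t\cap U_i})$ glues to a closed positive $(1,1)$-current on $C_t$ of total mass $\int_{C_t}S|_{C_t}=K_X\cdot C_t<0$, contradicting positivity.

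For the converse I would follow the three-step blueprint of \cite{BDPP13}, adapted to the Kähler setting in \cite{Ou25}. Step one is the duality between the pseudoeffective cone $\mathrm{Psef}(X)\subseteq H^{1,1}_\mathrm{BC}(X,\R)$ and the cone of \emph{strongly movable} classes in $H^{n-1,n-1}_\mathrm{BC}(X,\R)$, the latter cone being generated by classes of the form $\mu_*(\omega_1\wedge\cdots\wedge\omega_{n-1})$ for a modification $\mu\colon\widetilde X\to X$ and Kähler classes $\omega_i$ on $\widetilde X$. If $K_X\notin\mathrm{Psef}(X)$, duality produces a strongly movable class $\beta$ with $K_X\cdot\beta<0$. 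Step two is to realise $\beta$ as the class of an honest covering family of irreducible curves $(C_t)$ with $K_X\cdot C_t<0$ for general $t$; this is essentially a Chow-theoretic interpretation of the generators above. Step three is to replace $(C_t)$ by a covering family of \emph{rational} curves through a general point of $X$, which gives uniruledness by definition.

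The main obstacle is step three. In the projective setting it is Mori's bend-and-break, proved via reduction modulo $p$ — a tool unavailable on a general compact Kähler manifold. Ou's contribution is precisely to circumvent this: the strategy combines sharpened analytic versions of the BDPP orthogonality estimate (obtained through the Monge--Ampère equation) with a cutting-down argument reducing the production of rational curves to projective or Moishezon subvarieties where classical Mori theory applies, followed by a deformation-theoretic lifting of rational curves back to $X$. I would not attempt to reproduce this; I would simply invoke \cite[Theorem~1]{Ou25} for the non-trivial implication.
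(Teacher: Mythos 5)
The paper does not prove this statement at all: it is quoted directly as \cite[Theorem~1]{Ou25}, which is exactly what you do for the nontrivial implication, so your proposal matches the paper's approach. Your added sketch of the easy direction (restricting a positive current in $c_1(K_X)$ to a general free member of a covering family of rational curves) is the standard correct argument, and your outline of Ou's strategy is accurate background rather than a claimed proof.
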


This theorem generalises \cite[Corollary 0.3]{BDPP13} to the setting of Kähler manifolds. Using the methods from \cite{Ou25}, Theorem \ref{thm:quotientspsef} below was obtained in \cite[Theorem 1.4]{CP25}, which generalises \cite[Theorem~1.2]{CP19} and \cite[Theorem 3.11]{Ver10}. We will use it crucially in the proof of Propo\-sition~\ref{pro:quotientKaehler}.

\begin{theorem}\label{thm:quotientspsef}
Let $X$ be a compact Kähler manifold. Let $(\Omega^1_X)^{\otimes m}\to Q$ be a torsion-free coherent quotient for some $m\geq1$. If $K_X$ is pseudoeffective, then $\det Q$ is pseudoeffective.
\end{theorem}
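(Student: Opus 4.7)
The plan is to argue by contradiction. Assume that $\det Q$ is not pseudoeffective; the goal is then to produce a covering family of rational curves on $X$, so that $X$ is uniruled, contradicting the pseudoeffectivity of $K_X$ via Theorem~\ref{thm:Ou}. One may replace $Q$ by its reflexive hull, since this modifies $\det Q$ only by an effective divisor supported in codimension at least two.

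Dualizing the surjection $(\Omega_X^1)^{\otimes m} \to Q$ yields an inclusion $Q^* \hookrightarrow (T_X)^{\otimes m}$. The failure of pseudoeffectivity of $\det Q$ should, via a Kähler analogue of the BDPP duality between pseudoeffective $(1,1)$-classes and movable $(n-1,n-1)$-classes, provide a movable class $\alpha$ with $\det Q \cdot \alpha < 0$. With respect to such an $\alpha$, the subsheaf $Q^* \subset (T_X)^{\otimes m}$ has strictly positive slope, and a Harder--Narasimhan argument combined with standard tensor manipulations should produce a saturated coherent subsheaf $\sF \subset T_X$ such that $-K_\sF \cdot \alpha > 0$, in particular $-K_\sF$ is not pseudoeffective. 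After verifying that $\sF$ is closed under the Lie bracket, one obtains a holomorphic foliation with non-pseudoeffective canonical class, and a Kähler version of the Campana--P\u{a}un algebraicity and rational connectedness theorem for such foliations would produce a covering family of rational curves on $X$, yielding the desired contradiction.

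The principal obstacle in this program is the transition from the projective to the Kähler category. In the projective case, the required tools are classical: Bogomolov-type inequalities, Mehta--Ramanathan restriction theorems, and the Campana--P\u{a}un rational connectedness theorem for foliations. In the Kähler setting, however, even the notion of slope with respect to a transcendental movable class, as well as restriction theorems for non-algebraic polarizations, require substantial new input; these positivity manipulations should rest on the positive cones studied in \cite{Bou04} and the associated Boucksom--Zariski decomposition already recalled in \S\ref{subsec:modifiednef}. The result of \cite{Ou25} provides the crucial geometric ingredient that non-pseudoeffectivity of the canonical class implies uniruledness in the Kähler setting, and the heart of the argument should be to apply this criterion fibrewise to a desingularization of the algebraic leaf space of $\sF$, once algebraic integrability has been established. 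The rational connectedness step, which converts algebraic integrability together with $-K_\sF$ non-pseudoeffective into uniruledness of the leaves, is the most delicate part and is presumably where the bulk of the work of \cite{CP25} is concentrated.
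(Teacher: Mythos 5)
You should first be aware that the paper contains no proof of Theorem~\ref{thm:quotientspsef} at all: it is imported verbatim from \cite[Theorem~1.4]{CP25}, which the authors note was obtained with the methods of \cite{Ou25}, and it is then used as a black box in Proposition~\ref{pro:quotientKaehler}. So there is no internal argument to compare your sketch with, and the only question is whether your sketch could stand on its own. It cannot, because it defers exactly the steps that make the Kähler case hard. The first and most serious gap is the appeal to ``a Kähler analogue of the BDPP duality'' between the pseudoeffective cone and the cone of movable $(n-1,n-1)$-classes: this duality is a theorem only on projective manifolds \cite{BDPP13}, and for general compact Kähler manifolds it is a well-known open conjecture; neither the cones of \cite{Bou04} nor the Boucksom--Zariski decomposition recalled in \S\ref{subsec:modifiednef} supply it. Hence the very first reduction -- extracting a movable class $\alpha$ with $\det Q\cdot\alpha<0$ from the non-pseudoeffectivity of $\det Q$ -- is not available.

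Second, the subsequent ingredients (Harder--Narasimhan theory and restriction theorems with respect to a transcendental polarization, and above all ``a Kähler version of the Campana--P\u{a}un algebraicity and rational connectedness theorem for foliations'') are essentially the content of the result you are asked to prove: in the projective case the statement of Theorem~\ref{thm:quotientspsef} and the foliation theorem are established together in \cite{CP19}, so invoking a Kähler version of the latter as a hypothesis makes the argument circular, or at best reduces it to \cite[Theorem~1.4]{CP25} itself -- as you concede when you write that this is ``presumably where the bulk of the work of \cite{CP25} is concentrated.'' Your final step, deducing a contradiction from uniruledness via Theorem~\ref{thm:Ou}, is fine, but everything leading up to it rests on unproven Kähler analogues, one of which is an open conjecture. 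So the proposal is a program modelled on the projective argument rather than a proof; the fact that \cite{CP25} had to proceed by the new techniques of \cite{Ou25}, rather than the BDPP/Mehta--Ramanathan toolkit you outline, is precisely the point.
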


\subsection{A projectivity criterion}

We mentioned the following known result in the introduction.

\begin{lemma}\label{lem:Moishezon}
Let $X$ be a compact Kähler variety of dimension $n$ with rational singularities and let $\sL$ be a nef line bundle on $X$ with $\nd(\sL)=n$. Then $X$ is projective.
\end{lemma}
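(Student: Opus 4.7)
The plan is to show that $X$ is Moishezon, and then invoke the fact that a compact Moishezon K\"ahler variety with rational singularities is projective.

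Let $\pi\colon\tilde X\to X$ be a resolution of singularities, chosen so that $\tilde X$ is a compact K\"ahler manifold (possible when $X$ is compact K\"ahler, by suitable blow-ups combined with Varouchas-type results keeping the K\"ahler property). The pullback $\pi^*\sL$ is nef on $\tilde X$, and $c_1(\pi^*\sL)^n=c_1(\sL)^n$ since $\pi$ is bimeromorphic. This number is positive: indeed, $\sL$ is nef on the compact K\"ahler variety $X$, so $c_1(\sL)^n\in H^{2n}(X,\R)\cong\R$ is non-negative, and it is nonzero by the assumption $\nd(\sL)=n$.

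Next, by Demailly's holomorphic Morse inequalities applied to the compact K\"ahler manifold $\tilde X$, a nef line bundle with strictly positive top self-intersection is big; hence $\pi^*\sL$ is big on $\tilde X$. Because $X$ has rational singularities, $\pi_*\sO_{\tilde X}=\sO_X$ and the projection formula yield isomorphisms
\[
H^0(X,\sL^{\otimes m})\cong H^0(\tilde X,(\pi^*\sL)^{\otimes m})
\]
for every $m\geq 0$, so $\sL$ is big on $X$ as well. In particular $X$ is Moishezon, since the sections of $\sL^{\otimes m}$ grow like $m^n$ and therefore produce $n$ algebraically independent meromorphic functions.

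It remains to conclude that $X$ itself is projective. The smooth Moishezon K\"ahler manifold $\tilde X$ is projective by the classical theorem of Moishezon, and the main obstacle is to descend projectivity from $\tilde X$ to the singular variety $X$. One route is to fix an ample divisor $H$ on $\tilde X$ and consider, for a sufficiently small rational $t>0$, the class $\pi^*c_1(\sL)+t\,c_1(H)$ on $\tilde X$; using the rational singularities hypothesis together with the K\"ahler structure on $X$ to control the behaviour on exceptional fibres, one shows that a suitable positive multiple descends to an ample Cartier class on $X$. Alternatively, one can cite a ready-made descent result for projectivity of compact K\"ahler Moishezon varieties with rational singularities. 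Either way, this final step is where the argument is most delicate, and all the geometric input from the K\"ahler and rational-singularities assumptions enters.
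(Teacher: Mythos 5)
Your argument is essentially the paper's: resolve, deduce that $X$ is Moishezon (the paper gets this by citing the proof of Siu's Theorem~5.1 to make the Kähler resolution projective, you via holomorphic Morse inequalities and bigness of $\sL$ --- both routes are fine), and then conclude with a projectivity criterion for Moishezon Kähler varieties with rational singularities. The ``ready-made descent result'' you allude to in the final step is exactly what the paper cites, namely Namikawa's projectivity criterion (Theorem~1.6 of Nam02); invoking that result is the correct way to finish, whereas your alternative sketch of descending an ample class from the resolution by hand is vague and would be considerably harder to make rigorous, so it should be dropped in favour of the citation.
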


\begin{proof}
Pick a resolution $\pi\colon Y\to X$. Then $Y$ is a Kähler manifold and $\nd(\pi^*\sL)=n$, hence $Y$ is projective by the proof of \cite[Theorem~5.1]{Siu84}. Therefore, $X$ is a Moishezon variety, hence projective by \cite[Theo\-rem~1.6]{Nam02}.
\end{proof}

\subsection{Iitaka fibrations}

We will need the following lemma in Section \ref{sec:semiampleness}. The proof is the same as \cite[Lemma 2.3]{LP18} and \cite[Lemma 2.5]{Laz24a}; it is essentially contained in the proof of \cite[Theorem 2.1.33]{Laz04}, whose proof is almost the same in the analytic setting, see \cite[Theorem 5.10]{Uen75}.

\begin{lemma}\label{lem:iitaka}
Let $X$ be a compact complex variety and let $\sL$ be a $\Q$-line bundle on $X$ with $\kappa(X,\sL)\geq0$. Then for any sufficiently high resolution $\pi\colon Y\to X$ there exists a fibration $f\colon Y\to Z$ onto a projective manifold $Z$ such that \mbox{$\dim Z=\kappa(X,\sL)$}, and for every $\pi$-exceptional $\Q$-divisor $E\geq0$ on $Y$ and for a very general fibre $F$ of $f$, we have
$$\kappa\big(F,\pi^*\sL\otimes\sO_F(E)\big)=0.$$
\end{lemma}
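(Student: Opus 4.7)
The plan is to take $f$ to be the Iitaka fibration of $\pi^{*}\sL$ itself on a sufficiently high resolution $\pi\colon Y\to X$, and then show that this same morphism serves as the Iitaka fibration of $\pi^{*}\sL+E$ for every $\pi$-exceptional effective $\Q$-divisor $E$. This reduces the claim to the defining property of the Iitaka fibration on the very general fibre.

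First, I would apply the analytic Iitaka construction from \cite[Theorem~5.10]{Uen75}: since $\kappa(X,\sL)\geq 0$, on a sufficiently high resolution $\pi\colon Y\to X$ the rational maps defined by $|m\pi^{*}\sL|$ stabilise for large divisible $m$ to a morphism $f\colon Y\to Z$ onto a projective manifold $Z$ with $\dim Z=\kappa(Y,\pi^{*}\sL)=\kappa(X,\sL)$, and one has $\kappa(F,\pi^{*}\sL|_F)=0$ for a very general fibre $F$. We may reduce to the case where $X$ is normal by passing to its normalisation, which is finite and birational and so preserves Kodaira dimensions, and factoring the resolution through it.

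The key step is to establish the identity $\pi_{*}\sO_Y(mE)=\sO_X$ whenever $mE$ is Cartier: this follows from normality of $X$ together with the fact that $\pi(\Supp E)$ has codimension at least two in $X$, so that any rational function on $X$ with poles only along $\pi(\Supp E)$ extends regularly. By the projection formula, this yields
$$H^{0}(Y,m\pi^{*}\sL+mE)=H^{0}(X,m\sL\otimes\pi_{*}\sO_Y(mE))=H^{0}(X,m\sL)=H^{0}(Y,m\pi^{*}\sL),$$
the identification being realised by multiplication with the canonical section of $\sO_Y(mE)$. Consequently, the rational maps defined by $|m(\pi^{*}\sL+E)|$ and $|m\pi^{*}\sL|$ coincide on $Y$, and therefore the Iitaka fibration of $\pi^{*}\sL+E$ is also $f\colon Y\to Z$.

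Applying the defining property of the Iitaka fibration to $\pi^{*}\sL+E$ then gives $\kappa(F,(\pi^{*}\sL+E)|_F)=0$ for a very general fibre $F$. Since the set of $\pi$-exceptional effective $\Q$-divisors on $Y$ is countable (finitely many prime components with rational coefficients), the very general locus of fibres in $Z$ can be chosen uniformly for all such $E$. The main technical point in this proposal is the verification of $\pi_{*}\sO_Y(mE)=\sO_X$; once this is in hand, the remainder is a formal consequence of Iitaka's construction in the analytic setting.
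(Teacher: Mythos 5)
Your argument is correct and is essentially the paper's own proof: the paper simply defers to the Iitaka-fibration argument of \cite[Theorem 5.10]{Uen75} and \cite[Theorem 2.1.33]{Laz04} as adapted in \cite[Lemma 2.3]{LP18}, whose content is precisely your two steps --- from $\pi_*\sO_Y(mE)=\sO_X$ one gets $H^0\big(Y,m(\pi^*\sL+E)\big)=H^0(Y,m\pi^*\sL)$ via multiplication by the canonical section of $\sO_Y(mE)$, so the Iitaka fibration of $\pi^*\sL+E$ coincides with $f$ and its defining property gives $\kappa\big(F,(\pi^*\sL+E)|_F\big)=0$, with the very general fibre chosen uniformly over the countably many exceptional $\Q$-divisors $E$. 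The only caveat is your side remark that passing to the normalisation "preserves Kodaira dimensions": this is not true for arbitrary non-normal compact complex varieties, but it is harmless here, since the paper's conventions define $\kappa(X,\sL)$ only on normal varieties and that is the only case in which the lemma is used.
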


\subsection{Dlt models}

We will need the following higher dimensional version of \cite[Theorem 6.1]{DHP24}.

\begin{theorem}\label{thm:dltmodel}
Assume the Minimal Model Program for compact Kähler dlt pairs in dimension at most $n-1$.

Let $(X, \Delta)$ be a compact $\Q$-factorial K\"ahler log canonical pair of dimen\-sion~$n$.  Then there exists a $\Q$-factorial dlt pair $(X',\Delta')$ and a projective bi\-me\-ro\-mor\-phic morphism $\pi\colon X'\to X$ such that $K_{X'}+\Delta'\sim_\Q \pi^*(K_X+\Delta)$.

In particular, if $n\leq4$, then the result holds unconditionally.
\end{theorem}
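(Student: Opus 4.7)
The plan follows the standard strategy for constructing dlt modifications: take a log resolution, construct an auxiliary dlt pair that differs from the pullback of $K_X+\Delta$ by an effective exceptional divisor, and contract this divisor via a relative MMP. The induction hypothesis on the Kähler MMP will enter because the relevant MMP steps take place on the exceptional locus, which has dimension at most $n-1$.

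First, I would take a log resolution $f\colon Y\to X$ of $(X,\Delta)$ with $f$-exceptional prime divisors $E_1,\dots,E_r$, and set $\Gamma:=f^{-1}_*\Delta+\sum_i E_i$. Since $f$ is a log resolution, the pair $(Y,\Gamma)$ is log smooth, hence $\Q$-factorial dlt, and one has
$$K_Y+\Gamma=f^*(K_X+\Delta)+F,$$
where $F:=\sum_i(1+a_i)E_i$ with $a_i=a(E_i;X,\Delta)\geq-1$. The effectivity of $F$ is exactly the log canonicity of $(X,\Delta)$, and $F$ is $f$-exceptional by construction. The problem reduces to finding a bimeromorphic contraction over $X$ whose exceptional locus is precisely $\Supp(F)$.

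Second, I would run a $(K_Y+\Gamma)$-MMP over $X$ with scaling of a relatively Kähler form. Since $K_Y+\Gamma$ is $\Q$-linearly equivalent to $F$ over $X$, every $(K_Y+\Gamma)$-negative extremal ray is $F$-negative, so its locus is contained in $\Supp(F)$. In particular, the contraction loci and flipping loci all lie inside the $f$-exceptional set, whose dimension is at most $n-1$. Existence of the required flips and divisorial contractions, as well as termination, should then follow from the induction hypothesis applied in analytic neighborhoods of these loci, in the spirit of Shokurov's special termination argument as adapted to the Kähler setting in \cite{DHP24}. After finitely many steps, no extremal ray of the end model is $F$-negative, and the negativity lemma then forces $F$ to be contracted entirely.

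The output is a $\Q$-factorial dlt pair $(X',\Delta')$ together with a projective bimeromorphic morphism $\pi\colon X'\to X$ satisfying
$$K_{X'}+\Delta'\sim_\Q\pi^*(K_X+\Delta),$$
where $\Delta'$ is the strict transform of $\Gamma$. The main obstacle is justifying that an $n$-dimensional MMP on $Y$ can be run using only the induction hypothesis in dimension $n-1$: this requires a careful combination of special termination on the exceptional locus with the existence of flips there, both of which have only been worked out in the Kähler category relatively recently. For $n\leq 4$, the hypothesis is known unconditionally, as the Kähler MMP for $\Q$-factorial dlt threefolds follows from \cite{CHP16,DO23,DO24,GP24}, yielding the final assertion.
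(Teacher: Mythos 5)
Your proposal is essentially the paper's own argument: the paper simply refers to the proof of \cite[Theorem~6.1]{DHP24} (log resolution, the auxiliary dlt pair with $K_Y+\Gamma=f^*(K_X+\Delta)+F$, a relative $(K_Y+\Gamma)$-MMP over $X$ contracting $F$ by the negativity lemma), noting that the lower-dimensional MMP hypothesis enters exactly where you say it does, namely through Shokurov/Fujino special termination, with the $n\leq 4$ case unconditional by the known Kähler MMP in dimension $\leq 3$ (the paper cites \cite{DH25} for this). Your sketch matches that route, so no further comment is needed.
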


\begin{proof}
The proof is the same as that of \cite[Theorem 6.1]{DHP24}, where it was stated for $4$-folds. We only remark that the Minimal Model Program in lower dimensions is used in order to apply the special termination; the proof in the algebraic case from \cite{Fuj07a} applies verbatim. Note that the theorem holds unconditionally on $4$-folds by \cite[Theorems 1.1 and 1.2]{DH25}.
\end{proof}

\subsection{A result from mixed Hodge theory}

The following result is useful when dealing with singular varieties. The proof uses the mixed Hodge structure on the cohomology. 

\begin{lemma}\label{lemma mixed hodge lemma}
Let $X$ be a compact Kähler variety, let $Z \subseteq X$ be a closed subvariety, and let $\pi\colon\widetilde Z \to Z$ be a resolution of singularities. If a class $\alpha \in H^k(X,\C)$ satisfies $\pi^*(\alpha|_Z) = 0$, then $\alpha|_Z=0$.
\end{lemma}

\begin{proof}
Denote by $W_{k-1}H^k(Z,\C)$ the $(k-1)$-st step of the weight filtration of the mixed Hodge structure on $H^k(Z,\C)$. By \cite[Corollary~5.42]{PS08},\footnote{This corollary is stated for complex algebraic varieties, but the formalism of mixed Hodge theory is the same in the algebraic and analytic categories; one only needs the Kähler assumption (or, more generally, Fujiki's class $\mathscr C$ would be sufficient) to establish the existence of the mixed Hodge structure in the analytic case. We refer to \cite{AG05} (especially Chapter II.3) for mixed Hodge theory on compact complex spaces.} we have
$$ W_{k-1}H^k(Z,\C) = \ker\big(\pi^*\colon H^k(Z,\C)\to H^k(\widetilde Z,\C)\big). $$
Then strictness of morphisms of mixed Hodge structures, see \cite[Corollary~3.6]{PS08}, and purity of the Hodge structure on $H^k(X,\C)$ imply that the class $\alpha|_Z$ cannot be contained in $W_{k-1}H^k(Z,\C)$ unless it is zero.
\end{proof}

\section{Positive currents}

In this section, we collect several auxiliary results on positive and strongly positive currents, which we will need in the remainder of the paper.

\subsection{Siu decomposition of classes}\label{subsec:siudecomcurrents}

We define Siu decompositions of pseudoeffective $(1,1)$-classes, which will be crucial in the proof of Theorem~\ref{thm:main1}. This was implicit in \cite{Laz24}, and it is similar to the Boucksom--Zariski decomposition, but there are situations where the two decompositions differ and have applications in different contexts.

\begin{definition}\label{dfn:Siuclasses}
Let $X$ be a compact complex manifold. Let $\alpha\in H^{1,1}_{\mathrm{BC}}(X,\R)$ be a pseudoeffective class and let $T_{\min}\in\alpha$ be a current with minimal singularities. Let
$$T_{\min}=R+D$$
be the Siu decomposition of $T_{\min}$, where $D$ is the divisorial part and $R$ is the residual part. Set $D(\alpha):=\{D\}$ and $R(\alpha):=\{R\}$. Then the decomposition 
$$\alpha=R(\alpha)+D(\alpha)$$
is called the \emph{Siu decomposition of $\alpha$} into the \emph{divisorial part $D(\alpha)$} and the \emph{residual part $R(\alpha)$}. If $D(\alpha)=0$, then we say that $\alpha$ is a \emph{Siu-residual class}.
\end{definition}

Note that the Siu decomposition of a pseudoeffective class $\alpha\in H^{1,1}_{\mathrm{BC}}(X,\R)$ does not depend on the choice of $T_{\min}$ by \eqref{eq:Tmin}. Moreover, by \cite[Lemma~4.11]{LM23} or \cite[Lemma 5.8]{Laz24}, the current $D$ in Definition \ref{dfn:Siuclasses} is an $\R$-divisor.

\begin{remark}\label{rem:rigid1}
Let $X$ be a compact complex manifold and let $\alpha\in H^{1,1}_{\mathrm{BC}}(X,\R)$ be a Siu-residual pseudoeffective class. Then $\alpha$ is modified nef by \cite[Proposition 3.2(ii) and Proposition 3.6(i)]{Bou04}. Furthermore, if $\alpha$ and $\beta$ are proportional pseudoeffective classes in $H^{1,1}_{\mathrm{BC}}(X,\R)\setminus\{0\}$, then it is easy to see by \S\ref{subsec:minimalsingularities} that $\alpha$ is Siu-residual if and only if $\beta$ is Siu-residual.
\end{remark}

\begin{remark}\label{rem:rigid2}
Let $X$ be a compact complex manifold and let $\alpha\in H^{1,1}_{\mathrm{BC}}(X,\R)$ be a pseudoeffective class. Then the class $D(\alpha)$ is rigid. Indeed, let $T_{\min}\in\alpha$ be a current with minimal singularities, and let $T_{\min}=R+D$ be its Siu decomposition, where $D$ is the divisorial part. Then $D$ is a current with minimal singularities in $D(\alpha)$ by \cite[Remark 5.4]{Laz24}, hence for each closed positive current $T\in D(\alpha)$ we have $T\geq D$ by \eqref{eq:Tmin} and by considering the Siu decomposition of $T$. Therefore, $T-D\geq0$ and $T-D\equiv 0$, hence $T-D=0$ by \cite[Example 3.2]{LX24}.
\end{remark}

\begin{remark}
Let $X$ be a compact complex manifold and let $\alpha\in H^{1,1}_{\mathrm{BC}}(X,\R)$ be a pseudoeffective class. As mentioned in \S\ref{subsec:modifiednef}, we have the Boucksom--Zariski decomposition $\alpha=Z(\alpha)+N(\alpha)$ as in \cite[Definition 3.7]{Bou04}. Let us compare this to the Siu decomposition $\alpha=R(\alpha)+D(\alpha)$ from Definition~\ref{dfn:Siuclasses}. We have that $N(\alpha)$ is rigid by \cite[Theorem 3.12(i) and Proposition 3.13]{Bou04}, and $D(\alpha)$ is rigid by the previous remark. Let $N\in N(\alpha)$ and $D\in D(\alpha)$ be the unique closed positive currents; they are both $\R$-divisors. By \cite[Proposition 3.6]{Bou04}, we have $N\leq D$, with equality if $\alpha$ is a big class. Thus, the Siu decomposition and the Boucksom--Zariski decomposition coincide for big classes. They are, however, different in general: for instance, as in \cite[Exam\-ple 3.6]{LX24}, there exists a projective surface $X$ and a nef prime divisor $C$ on $X$ such that the cohomology class $\beta:=\{C\}$ is rigid. Then we have $N(\beta)=0$ and $D(\beta)=\beta$.
\end{remark}

\subsection{A rationality lemma}
We will need the following simple lemma in the proof of Theorem \ref{thm:nu1a}.

\begin{lemma}\label{lem:rationalclass}
Let $X$ be a compact complex manifold and let $\alpha\in H^{1,1}_{\mathrm{BC}}(X,\Q)$. Assume that there exists an $\R$-divisor $D \geq 0$ on $X$ such that $\alpha=\{D\}$. Then there exists a $\Q$-divisor $D'\geq0$ such that $\alpha=\{D'\}$ and $\kappa(X,D')=\kappa(X,D)$.
\end{lemma}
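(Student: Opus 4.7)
The plan is to construct $D'$ as a small rational perturbation of $D$, sandwiched between $(1-\varepsilon)D$ and $(1+\varepsilon)D$ for some small $\varepsilon\in\Q_{>0}$, so that the Kodaira dimension is automatically preserved by monotonicity and scale invariance.

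First, I would write $D=\sum_{i=1}^n a_i D_i$ with $a_i>0$ real and $D_i$ distinct prime divisors on $X$. Each class $\{D_i\}$ is integral and hence lies in the rational structure $H^{1,1}_{\mathrm{BC}}(X,\Q)$, and by assumption $\alpha$ does as well. Thus the $\R$-linear system
$$\phi_\R(b_1,\ldots,b_n):=\sum_i b_i\{D_i\}=\alpha$$
is defined over $\Q$ and admits the real solution $(a_1,\ldots,a_n)$. By elementary linear algebra, the affine subspace $S\subseteq\R^n$ of all real solutions then contains a rational point, and consequently rational points are dense in $S$.

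Fix $\varepsilon\in\Q_{>0}$ small. Since $a_i>0$ for every $i$, density in $S$ allows me to choose $(b_1,\ldots,b_n)\in S\cap\Q^n$ close enough to $(a_1,\ldots,a_n)$ to guarantee $(1-\varepsilon)a_i\le b_i\le(1+\varepsilon)a_i$ for all $i$; in particular $b_i>0$. Setting $D':=\sum_i b_i D_i$ yields an effective $\Q$-divisor with $\{D'\}=\alpha$ and $(1-\varepsilon)D\le D'\le(1+\varepsilon)D$. I then invoke two standard properties of the Kodaira dimension of effective $\R$-divisors from \cite[\S II.3]{Nak04}: monotonicity under the natural order (if $0\le A\le B$ then $\lfloor mA\rfloor\le\lfloor mB\rfloor$ for every $m$, hence $\kappa(X,A)\le\kappa(X,B)$), and scale invariance $\kappa(X,\lambda E)=\kappa(X,E)$ for any $\lambda\in\R_{>0}$. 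Applied to the sandwich, this gives
$$\kappa(X,D)=\kappa\bigl(X,(1-\varepsilon)D\bigr)\le\kappa(X,D')\le\kappa\bigl(X,(1+\varepsilon)D\bigr)=\kappa(X,D),$$
which forces $\kappa(X,D')=\kappa(X,D)$.

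The main subtle point is the existence of a rational solution to the $\R$-linear system expressing $\alpha$ as a combination of $\{D_1\},\ldots,\{D_n\}$; this step crucially uses the hypothesis $\alpha\in H^{1,1}_{\mathrm{BC}}(X,\Q)$ together with the integrality of the classes of prime divisors. Once $D'$ has been chosen, the sandwich combined with the standard scale invariance and monotonicity of $\kappa$ for $\R$-divisors yields the Kodaira dimension equality with no further work.
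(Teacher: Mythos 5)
Your proposal is correct and takes essentially the same route as the paper's proof: both reduce to the fact that the $\Q$-linear system $\sum_i b_i\{D_i\}=\alpha$ (rational because the $\{D_i\}$ and $\alpha$ are rational classes) has rational solutions dense in its nonempty real solution set, then pick a rational solution with positive coefficients close to the coefficients of $D$, and deduce $\kappa(X,D')=\kappa(X,D)$ from the resulting comparison of supports. The only difference is presentational: the paper packages the positivity constraint as a rational polyhedron $\rho^{-1}(\alpha)\cap\mathcal C$ and declares the equality of Kodaira dimensions to be clear, whereas you make that last step explicit via the sandwich $(1-\varepsilon)D\le D'\le(1+\varepsilon)D$ together with monotonicity and (rational) scale invariance of $\kappa$.
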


\begin{proof}
Let $D_1,\dots,D_r$ be the prime components of $D$ and consider the vector subspace $V:=\bigoplus_{i=1}^r \R D_i$ of the space of all divisors on $X$. Consider the cone $\mathcal C:=\sum_{i=1}^r\R_+D_i\subseteq V$ and the linear map
$$\rho\colon V\to H^{1,1}_{\mathrm{BC}}(X,\R),\quad E\mapsto \{E\}.$$
Then $\rho^{-1}\big(\rho(\alpha)\big)$ is a rational affine subspace of the real vector space $V$, hence the set $\mathcal P:=\rho^{-1}\big(\rho(\alpha)\big)\cap\mathcal C$ is cut out from $\rho^{-1}\big(\rho(\alpha)\big)$ by finitely half-spaces whose boundaries are rational hyperplanes. The set $\mathcal P$ is not empty since $D\in \mathcal P$. Thus, there exists a rational divisor $0\leq D'\in \mathcal P$ such that $\Supp D'=\Supp D$, and we clearly have $\alpha=\{D'\}$ and $\kappa(X,D')=\kappa(X,D)$.
\end{proof}

\subsection{A pseudoeffectivity lemma}
We will need the following result in the proof of Theorem \ref{thm:nonvanishingFormsnu1}.

\begin{lemma}\label{lem:psef}
Let $\pi\colon X\to Y$ be a proper bimeromorphic morphism, where $X$ and $Y$ are complex varieties with rational singularities. Let $\sF$ be a pseudoeffective line bundle on $X$ such that the reflexive sheaf $\sG:=(\pi_*\sF)^{**}$ is a line bundle on $Y$. Then $\sG$ is pseudoeffective.
\end{lemma}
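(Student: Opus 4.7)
The plan is to reduce to the case where $X$ is smooth, realise $\sF$ as $\pi^*\sG$ twisted by a $\pi$-exceptional Cartier divisor, and then push forward a closed positive $(1,1)$-current in $c_1(\sF)$ to produce one in $c_1(\sG)$.

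First I would reduce to the case when $X$ is a complex manifold. Given a resolution $\mu\colon W\to X$, the pullback $\mu^*\sF$ is pseudoeffective, and since $X$ has rational singularities, $\mu_*\sO_W=\sO_X$; hence by the projection formula $\mu_*(\mu^*\sF)=\sF$, so $((\pi\circ\mu)_*\mu^*\sF)^{**}=(\pi_*\sF)^{**}=\sG$. Replacing $(X,\pi,\sF)$ by $(W,\pi\circ\mu,\mu^*\sF)$, I may assume $X$ is smooth.

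With $X$ smooth, I compare $\sF$ and $\pi^*\sG$. On the maximal open $U\subseteq Y$ over which $\pi$ restricts to an isomorphism, the sheaf $\pi_*\sF|_U$ is already a line bundle and so agrees with its reflexive hull $\sG|_U$; hence $\sF$ and $\pi^*\sG$ coincide on $\pi^{-1}(U)$. Since $X$ is smooth, the line bundle $\sF\otimes(\pi^*\sG)^{-1}$ is then isomorphic to $\sO_X(E)$ for some Cartier divisor $E$ on $X$ supported on $X\setminus\pi^{-1}(U)$, whose prime components are necessarily $\pi$-exceptional.

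Now take a closed positive $(1,1)$-current $T$ on $X$ representing $c_1(\sF)$. Its pushforward $\pi_*T$ is a closed positive $(1,1)$-current on $Y$, and at the level of Bott--Chern cohomology
\[
\{\pi_*T\}=\pi_*c_1(\sF)=\pi_*\pi^*c_1(\sG)+\pi_*\{[E]\}=c_1(\sG),
\]
using that $\pi_*\pi^*=\mathrm{id}$ on $H^{1,1}_{\mathrm{BC}}(Y,\R)$ because $\pi$ is proper bimeromorphic, and that $\pi_*[E]=0$ because $E$ is $\pi$-exceptional. Hence $\sG$ is pseudoeffective. The main technical point is to confirm that $\pi_*T$ is a current \emph{with local potentials} on the singular variety $Y$ in the sense of Remark~\ref{remark currents with potentials}; working locally using a trivialisation of $\sG$, this reduces to the classical fact that the direct image of a plurisubharmonic function by a proper bimeromorphic map is plurisubharmonic.
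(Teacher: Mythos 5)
Your proposal is correct and follows essentially the same route as the paper: compare $\sF$ with $\pi^*\sG$ up to $\pi$-exceptional divisors, push a closed positive current in the class of $\sF$ forward, use $\pi_*[E]=0$ for exceptional divisors, and then handle local potentials across the codimension at least $2$ image of the exceptional locus. The only differences are minor: the paper skips your initial reduction to smooth $X$, working directly with currents with local potentials on the singular $X$ and writing the comparison as $\sF\otimes\sO_X(E_1)=\pi^*\sG\otimes\sO_X(E_2)$ with $E_1,E_2\geq0$ exceptional, and the ``classical fact'' you invoke at the end is exactly the step the paper makes precise by restricting $\pi_*T$ to the complement of the image of the exceptional set and extending it via \cite[Proposition~4.6.3(i)]{BG13}, a Grauert--Remmert-type extension on the normal space $Y$ -- so you have correctly identified, but not really discharged, the one nontrivial analytic point.
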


\begin{proof}
The line bundles $\sF$ and $\pi^*\sG$ differ along the exceptional locus of $\pi$, hence there exist $\pi$-exceptional divisors $E_1\geq0$ and $E_2\geq0$ such that
\begin{equation}\label{eq:55}
\sF\otimes\sO_X(E_1)=\pi^*\sG\otimes\sO_X(E_2).
\end{equation}
We have $\{\sG\} \in H^{1,1}_{\mathrm{BC}}(Y,\R)$ and $\{\pi^*\sG\}=\pi^*\{\sG\} \in H^{1,1}_{\mathrm{BC}}(X,\R)$. Let $T\in\{\sF\}$ be a closed positive $(1,1)$-current with local potentials, and let $\alpha\in\{\sG\}$ be a closed $(1,1)$-form with local potentials. Then \eqref{eq:55} gives
\begin{equation}\label{eq:excep}
T+[E_1]\equiv \pi^*\alpha+[E_2].
\end{equation}
Since $\pi_*[E_1]=\pi_*[E_2]=0$ by \cite[Proposition 4.2.78]{BM19}, applying $\pi_*$ to the equation \eqref{eq:excep} we obtain
$$\pi_*T\equiv \alpha.$$
Let $Z\subseteq Y$ be the image of the $\pi$-exceptional set in $Y$. Then $Z$ is of codimension at least $2$ in $Y$. The current $(\pi_*T)|_{Y\setminus Z}$ is closed and positive, has local potentials, and satisfies $(\pi_*T)|_{Y\setminus Z}\in\{\alpha\}|_{Y\setminus Z}$, hence by the proof of \cite[Proposition~4.6.3(i)]{BG13}, there exists a closed positive current with local potentials $S\in\{\alpha\}=\{\sG\}$. Thus, $\sG$ is pseudoeffective.
\end{proof}

\subsection{A rigidity result}

Several times in the paper we will need the following lemma, which ge\-ne\-ra\-li\-ses \cite[Example 3.2]{LX24} to classes of higher bi-degree on a compact Kähler manifold. We include the proof by Simone Diverio from \cite{Div12}.

\begin{lemma}\label{lem:cohotozero}
Let $X$ be a K\"ahler manifold of dimension $n$, let $\omega$ be a Kähler form on $X$, and let $T$ be a closed positive $(p,p)$-current on $X$.
\begin{enumerate}[\normalfont (a)]
\item If $\int_X T\wedge \omega^{n-p}=0$, then $T=0$. 
\item Assume that $X$ is compact. If $\{T\}\cdot\{\omega\}^{n-p}=0$, then $T=0$. In particular, if $\{T\}=0\in H^{p,p}(X,\R)$, then $T=0$.
\end{enumerate}
\end{lemma}

\begin{proof}
We first show (a). Let $h$ be the hermitian metric on $X$ whose fundamental form is $\omega$. Let $\|T\|$ be the mass measure with respect to local orthonormal frames in the metric $h$, see \cite[Remark~III.1.15]{Dem12a}, and let $\sigma_T$ be the trace measure $\sigma_T$ with respect to $\omega$, see \cite[Definition~III.1.21]{Dem12a}. Then there exists a positive constant $C$ such that \mbox{$\|T\|\leq C\sigma_T$} by \cite[Proposition~III.1.14 and~equation~III.(1.22)]{Dem12a}. Therefore, setting $C':=\frac{C}{2^{n-p}(n-p)!}$, we have
\begin{equation}\label{eq:measures}
\|T\|(X)\leq C\sigma_T(X)=C'\int_X T\wedge \omega^{n-p}.
\end{equation}
Since the coefficients of $T$ are measures whose total variations are dominated by $\|T\|$, we conclude by \eqref{eq:measures} and the assumption of (a) that~$T=0$.

For (b), note that $\int_X T\wedge \omega^{n-p} = \{T\}\cdot \{\omega\}^{n-p}$, since the integral depends only on the cohomology class of $T$, as $\omega$ is closed. We conclude as in (a), by \eqref{eq:measures} and the assumption of (b).
\end{proof}

We will often need the following useful result.

\begin{lemma}\label{lem:cohoclasses}
Let $X$ be a compact K\"ahler manifold of dimension $n$, let $\eta$ be a K\"ahler class on $X$, let $T$ be a closed positive $(p,p)$-current on $X$, and let $0\leq m \leq n-p$ be an integer. Let $\alpha_1,\dots,\alpha_m$ be nef classes in $H^{1,1}(X,\R)$. Then the following statements hold.
\begin{enumerate}[\normalfont (a)]
\item There exists a closed positive $(p+m,p+m)$-current $\Theta\in \{T\}\cdot \alpha_1\cdot\ldots\cdot\alpha_m$.
\item We have $\{T\}\cdot \alpha_1\cdot\ldots\cdot\alpha_m \cdot \eta^{n-m-p} \geq 0$, with equality if and only if $\{T\}\cdot \alpha_1\cdot\ldots\cdot\alpha_m=0$.
\end{enumerate}
\end{lemma}

\begin{proof}
We first show (a), and we argue similarly as in the proof of \cite[Proposition~6.21]{Dem12}. For every $\varepsilon>0$ and each $1\leq i\leq m$, the class $\alpha_i+\varepsilon\eta$ is Kähler. We fix Kähler forms $\omega_{i,\varepsilon}\in\alpha+\varepsilon\eta$ and a Kähler form $\omega\in\eta$. Then each current 
$$\Theta_\varepsilon:=T\wedge \omega_{1,\varepsilon}\wedge\ldots\wedge\omega_{m,\varepsilon}\in \{T\}\cdot(\alpha_1+\varepsilon\eta)\cdot\ldots\cdot(\alpha_m+\varepsilon\eta)$$
is positive by \cite[Corollary~III.1.16]{Dem12a}. Since
$$\int_X \Theta_\varepsilon\wedge \omega^{n-m-p}=\{T\}\cdot(\alpha_1+\varepsilon\eta)\cdot\ldots\cdot(\alpha_m+\varepsilon\eta)\cdot\eta^{n-m-p},$$
and since the last expression is uniformly bounded, we conclude by \cite[Proposition~III.1.23]{Dem12a} that there exists a sequence of positive real numbers $\{\varepsilon_n\}_{n\in\N}$ converging to $0$ such that there exists a weak limit $\Theta$ of the sequence of currents $\{\Theta_{\varepsilon_n}\}_{n\in\N}$. Clearly, $\Theta$ is a closed positive current in the class $\{T\}\cdot \alpha_1\cdot\ldots\cdot\alpha_m$.

For (b), note that by (a) we have
$$ \{T\}\cdot \alpha_1\cdot\ldots\cdot\alpha_m \cdot\eta^{n-m-p}=\{\Theta\} \cdot \eta^{n-m-p}=\int_X \Theta\wedge\omega^{n-m-p}\geq0 .$$
If equality holds, then $\Theta=0$ by Lemma \ref{lem:cohotozero}(b). This implies the result.
\end{proof}

\subsection{Positivity of cohomology classes}

We include the proof of the following well-known result for completeness and for the lack of explicit reference.

\begin{lemma}\label{lem:pseflines}
Let $X$ be a compact complex manifold. Then the cone of pseudo\-effective classes $\mathcal E(X)\subseteq H^{1,1}_{\mathrm{BC}}(X,\R)$ contains no affine lines.
\end{lemma}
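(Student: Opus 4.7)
The plan is to argue by contradiction: if $\mathcal E(X)$ contained an affine line, then some nonzero class $\beta$ would satisfy both $\beta\in\mathcal E(X)$ and $-\beta\in\mathcal E(X)$, and I want to rule this out by producing a linear functional on $H^{1,1}_{\mathrm{BC}}(X,\R)$ that is strictly positive on nonzero pseudoeffective classes.

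First I would set up the reduction. Suppose $\{\alpha_0+t\beta : t\in\R\}\subseteq\mathcal E(X)$ with $\beta\neq 0$. Because $\mathcal E(X)$ is a closed convex cone (closedness follows from weak-$*$ compactness of sets of closed positive currents of bounded mass with respect to a fixed Hermitian form on $X$), for $t>0$ the class $t^{-1}\alpha_0+\beta$ is pseudoeffective, and letting $t\to+\infty$ gives $\beta\in\mathcal E(X)$; the same argument applied with $t<0$ gives $-\beta\in\mathcal E(X)$. Pick closed positive $(1,1)$-currents $T_+\in\beta$ and $T_-\in-\beta$.

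The core of the proof uses a Gauduchon metric. By Gauduchon's theorem, $X$ admits a Hermitian metric $\omega$ with $dd^c(\omega^{n-1})=0$, where $n=\dim X$. Since $\omega^{n-1}$ is $dd^c$-closed, for any closed $(1,1)$-current $T$ the integral $\int_X T\wedge\omega^{n-1}$ depends only on the Bott--Chern class $\{T\}\in H^{1,1}_{\mathrm{BC}}(X,\R)$, yielding a well-defined linear functional
\[
\ell\colon H^{1,1}_{\mathrm{BC}}(X,\R)\to\R,\qquad \ell(\{T\})=\int_X T\wedge\omega^{n-1}.
\]
Repeating the trace-measure computation from the proof of Lemma \ref{lem:cohotozero} with $\omega$ replaced by the Gauduchon metric (the estimate $\|T\|\leq C\sigma_T$ from \cite[Prop.~III.1.14]{Dem12a} does not require $\omega$ to be Kähler, only Hermitian), one sees that $\ell(\{T\})\geq 0$ for every closed positive $(1,1)$-current $T$, with equality if and only if $T=0$.

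Applying this to our situation, $\ell(\beta)=\ell(\{T_+\})\geq 0$ and $\ell(-\beta)=\ell(\{T_-\})\geq 0$, forcing $\ell(\{T_+\})=0$ and hence $T_+=0$. But then $\beta=\{T_+\}=0$, a contradiction. The main obstacle is that $X$ need not be Kähler, so Lemma \ref{lem:cohotozero} does not apply directly; the resolution is that a Gauduchon metric is exactly the substitute needed to make the pairing well-defined on Bott--Chern cohomology, while the pointwise trace-measure argument continues to detect vanishing of the current.
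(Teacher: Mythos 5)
Your argument is correct, and its core mechanism differs from the paper's. The reduction to $\beta,-\beta\in\mathcal E(X)$ via closedness of the cone is the same, but from there the paper argues current-theoretically: it writes $T+S\equiv 0$ for positive representatives of $\beta$ and $-\beta$, invokes the rigidity of the zero class (\cite[Example 3.2]{LX24}, i.e.\ a $dd^c$-exact positive current has a constant global potential, hence vanishes) to get $T+S=0$, and then shows by a local partition-of-unity argument that every test $(n-1,n-1)$-form is a difference of strongly positive ones, so that a current which is positive together with its negative must be zero. You instead introduce a Gauduchon metric $\omega$ and use the functional $\ell(\{T\})=\int_X T\wedge\omega^{n-1}$, which is well defined on $H^{1,1}_{\mathrm{BC}}(X,\R)$ precisely because $dd^c(\omega^{n-1})=0$, is nonnegative on pseudoeffective classes, and detects vanishing of a positive representative via the trace-mass inequality $\|T\|\leq C\sigma_T$ (which, as you correctly note, is a pointwise Hermitian statement needing no Kähler hypothesis). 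Your route buys a single strictly positive linear functional on $\mathcal E(X)\setminus\{0\}$, which kills both $T_+$ and $T_-$ at once and bypasses both the rigidity input and the decomposition of test forms into strongly positive pieces; it also quietly supplies the mass bound underlying closedness of $\mathcal E(X)$. The paper's route stays within the references it already uses (no appeal to Gauduchon's theorem) and is more elementary in that sense, at the cost of the slightly fiddly local argument. Both proofs are complete; if you wanted to streamline yours further, you could note that once $\ell$ is in hand you only need it on the two chosen representatives, so the well-definedness on Bott--Chern classes is used exactly once, to see that $\int_X(T_++T_-)\wedge\omega^{n-1}=0$.
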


\begin{proof}
Assume that $\mathcal E(X)$ contains an affine line. Then there exist classes $\alpha,\beta\in H^{1,1}_{\mathrm{BC}}(X,\R)$ such that $\beta\neq0$ and $\alpha+t\beta\in\mathcal E(X)$ for all $t\in\R$. Since the cone $\mathcal E(X)$ is closed, we conclude that
$$\beta=\lim_{t\to\infty}\frac{1}{t}(\alpha+t\beta)\in\mathcal E(X)\quad\text{and}\quad {-}\beta=\lim_{t\to\infty}\frac{1}{t}(\alpha-t\beta)\in\mathcal E(X).$$
In particular, there exist closed positive $(1,1)$-currents $T$ in the class $\beta$ and $S$ in the class ${-}\beta$. Therefore, $T+S\equiv0$, hence 
\begin{equation}\label{eq:300}
T+S=0
\end{equation}
by \cite[Example 3.2]{LX24}. We will derive a contradiction by showing that $T=S=0$, and hence $\beta=0$.

To this end, note first that the problem is local by \cite[Theo\-rem~7]{dRh84}. Hence, we may assume that $X$ is a coordinate ball. From~\eqref{eq:300} we immediately obtain that
\begin{equation}\label{eq:300a}
T(\varphi)=S(\varphi)=0
\end{equation}
for every strongly positive test $(n-1,n-1)$-form $\varphi$ on $X$, where $n:=\dim X$. For the remainder of the proof, we fix a test $(n-1,n-1)$-form $\theta$. We need to show that $T(\theta)=S(\theta)=0$. 

Assume first that there exist a smooth function $f$ with compact support and a strongly positive form $\varphi$ on $X$ such that $ \theta=f\varphi$. Treating the real and imaginary parts of $f$ separately, we may assume that $f$ is a real function. Fix a smooth real function $\chi$ on $X$ with compact support such that $\chi\equiv1$ on the compact subset $K\subseteq X$ which is the support of $f$. Fix a constant $C\geq0$ such that $f(z)\geq{-C}$ for $z\in K$, and consider the functions $h:=C\chi$ and $g:=h+f=\chi(C+f)$. Then $g$ and~$h$ are smooth \emph{nonnegative} functions with compact support on $X$ such that $f=g-h$. Therefore, the test forms $ \theta_1:= g\varphi$ and $ \theta_2:= h\varphi$ are strongly positive, and we have $\theta=\theta_1-\theta_2$. Thus, $T(\theta)=S(\theta)=0$ by \eqref{eq:300a}.

In the general case, by \cite[Lem\-ma~III.1.4]{Dem12a} there exists a set $\{\varphi_1,\dots,\varphi_m\}$ of strongly positive forms which generate the $C^\infty(X,\C)$-module $C^\infty\big(X,\bigwedge^{n-1,n-1}T_X^*\big)$. Then there are smooth functions $f_1,\dots,f_m$ with compact support such that $ \theta=f_1\varphi_1+\dots+f_m\varphi_m $. Thus, we conclude by the case treated above.
\end{proof}

We will need the following two results in Section \ref{sec:semiampleness}.

\begin{lemma}\label{lem:numdim}
Let $X$ be a compact Kähler variety. If $\alpha$ and $\beta$ are nef classes in $H^{1,1}(X,\R)$ such that the class $\beta-\alpha$ is pseudoeffective, then
$$\nd(\alpha)\leq\nd(\beta).$$
\end{lemma}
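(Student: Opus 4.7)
Set $k := \nd(\alpha)$ and $n := \dim X$, and fix a Kähler class $\omega \in H^{1,1}(X,\R)$ together with a smooth positive definite representative $\eta \in \omega$. The strategy is to prove that $\beta^k \cdot \omega^{n-k} > 0$; since this forces $\beta^k \neq 0$ in $H^{2k}(X,\R)$, it immediately yields $\nd(\beta) \geq k = \nd(\alpha)$.

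The first step is to establish the strict inequality $\alpha^k \cdot \omega^{n-k} > 0$. Since $\alpha$ is nef, for each $\varepsilon > 0$ the class $\alpha + \varepsilon\omega$ is Kähler, so admits a smooth positive definite representative $\eta_\varepsilon$. Then $\eta_\varepsilon^k$ is a positive $(k,k)$-form representing $(\alpha+\varepsilon\omega)^k$, and the integrals $\int_X \eta_\varepsilon^k \wedge \eta^{n-k} = (\alpha+\varepsilon\omega)^k \cdot \omega^{n-k}$ are uniformly bounded as $\varepsilon \to 0$, with nonnegative limit $\alpha^k \cdot \omega^{n-k}$. By the weak compactness of positive currents of bounded mass, a subsequence of $\eta_\varepsilon^k$ converges weakly to a closed positive $(k,k)$-current $T$ with $\{T\} = \alpha^k$; this is essentially the argument used in the proof of Lemma~\ref{lem:cohoclasses}. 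If $\alpha^k \cdot \omega^{n-k}$ were zero, then $\int_X T \wedge \eta^{n-k} = 0$, so Lemma~\ref{lem:cohotozero} would give $T = 0$, contradicting $\{T\} = \alpha^k \neq 0$.

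The second step uses the telescoping identity
\[
\beta^k - \alpha^k \;=\; (\beta - \alpha) \cdot \sum_{i=0}^{k-1} \beta^i \cdot \alpha^{k-1-i},
\]
so that, setting $\gamma := \beta - \alpha$,
\[
\beta^k \cdot \omega^{n-k} \;=\; \alpha^k \cdot \omega^{n-k} \;+\; \sum_{i=0}^{k-1} \beta^i \cdot \alpha^{k-1-i} \cdot \omega^{n-k} \cdot \gamma.
\]
Each summand on the right is nonnegative: since $\alpha$ and $\beta$ are nef, approximate $\alpha$ and $\beta$ by the Kähler classes $\alpha + \varepsilon\omega$ and $\beta + \varepsilon\omega$ with smooth positive representatives $\eta^\varepsilon_\alpha$ and $\eta^\varepsilon_\beta$. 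Representing $\gamma$ by a closed positive $(1,1)$-current $T_\gamma$, the wedge product $(\eta^\varepsilon_\beta)^i \wedge (\eta^\varepsilon_\alpha)^{k-1-i} \wedge \eta^{n-k} \wedge T_\gamma$ is a positive top-degree current, so its integral is nonnegative. This integral computes $(\beta+\varepsilon\omega)^i \cdot (\alpha+\varepsilon\omega)^{k-1-i} \cdot \omega^{n-k} \cdot \gamma$; letting $\varepsilon \to 0$ gives $\beta^i \cdot \alpha^{k-1-i} \cdot \omega^{n-k} \cdot \gamma \geq 0$. Combined with step one, we conclude $\beta^k \cdot \omega^{n-k} > 0$, as desired.

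The main obstacle is the first step, where the jump from $\alpha^k \neq 0$ in cohomology to $\alpha^k \cdot \omega^{n-k} > 0$ requires extracting a genuine positive current representative of $\alpha^k$ and applying the vanishing criterion of Lemma~\ref{lem:cohotozero}. A minor further technicality is that the statement is for a possibly singular Kähler variety, so if one adopts the framework of $H^{1,1}_{\mathrm{BC}}$ and currents with local potentials, the same arguments apply; alternatively, one pulls back to a resolution, which preserves nefness, pseudoeffectivity, and (by projection-formula-type reasoning) the relevant intersection numbers.
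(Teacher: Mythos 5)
Your proposal is correct and follows essentially the same route as the paper: pass to a resolution, use the telescoping identity $\beta^k-\alpha^k=(\beta-\alpha)\cdot\sum_i\alpha^i\cdot\beta^{k-1-i}$, produce positive currents by Kähler approximation as in Lemma~\ref{lem:cohoclasses}, and conclude via Lemma~\ref{lem:cohotozero} that $\beta^k\cdot\{\omega\}^{n-k}>0$. The only cosmetic difference is where the strict inequality comes from: you prove $\alpha^k\cdot\{\omega\}^{n-k}>0$ directly and show the correction terms are nonnegative by pairing a positive current in $\beta-\alpha$ with smooth approximations, whereas the paper splits into the cases $\beta^k=\alpha^k$ and $\beta^k\neq\alpha^k$ and extracts strict positivity from a nonzero positive current in the class $\beta^k-\alpha^k$.
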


\begin{proof}
By passing to a resolution, we may assume that $X$ is a compact Kähler manifold. Let $k:=\nd(\alpha)$ and let $\eta$ be a Kähler class on $X$. Then by Lemma \ref{lem:cohoclasses}(b) we have
$$ \alpha^k\cdot\eta^{n-k}>0 $$
and
$$(\beta^k-\alpha^k)\cdot\eta^{n-k}=\sum_{i=0}^{k-1}(\beta-\alpha)\cdot\alpha^i\cdot\beta^{k-1-i}\cdot\eta^{n-k}\geq0.$$
These inequality imply that
$$\beta^k\cdot\eta^{n-k}=\alpha^k\cdot\eta^{n-k}+(\beta^k-\alpha^k)\cdot\eta^{n-k}>0,$$
which shows that $\beta^k\neq0$, as desired.
\end{proof}

\begin{lemma}\label{lem:nefexceptional}
Let $f\colon X\dashrightarrow Y$ be a bimeromorphic map from a compact Kähler manifold $X$ to a normal compact complex variety $Y$. Assume that the map $f^{-1}$ does not contract divisors. Let $\alpha\in  H^{1,1}(X,\R)$ be a nef class and assume that there exist an $f$-exceptional $\R$-divisor $E_1\geq0$ and an $\R$-divisor $E_2\geq0$ on $X$ with $E_1-E_2\in\alpha$. Then $E_1=E_2$. 
\end{lemma}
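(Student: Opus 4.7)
The idea is to resolve the indeterminacy of $f$ and then run an intersection-theoretic argument on a common Kähler resolution. Let $p\colon W\to X$ be a bimeromorphic morphism from a smooth compact Kähler manifold $W$ that resolves the indeterminacy of $f$, so that $q:=f\circ p\colon W\to Y$ is a morphism; set $n=\dim W$ and fix a Kähler class $\omega$ on $W$. I would first verify that $p^*E_1$ is $q$-exceptional: strict transforms of components of $E_1$ are $q$-exceptional because $E_1$ is $f$-exceptional, and any $p$-exceptional prime divisor $G$ on $W$ must be $q$-exceptional as well, for if $q(G)$ were a prime divisor $D'$ on $Y$, then by hypothesis the strict transform $\widetilde{D'}$ on $X$ would be a divisor, whose strict transform on $W$ would be a second prime divisor mapping onto $D'$ via $q$, contradicting the bimeromorphy of $q$.

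Pulling back the cohomological identity $E_1-E_2\in\alpha$ gives $p^*E_1-p^*E_2\equiv p^*\alpha$ on $W$, with $p^*\alpha$ nef. Decompose into positive and negative parts: set $G_1:=(p^*E_1-p^*E_2)_+$ and $G_2:=(p^*E_2-p^*E_1)_+$, so $G_1,G_2\geq 0$ are effective with disjoint supports and $G_1\equiv p^*\alpha+G_2$. Since the support of $G_1$ is contained in that of $p^*E_1$, the divisor $G_1$ is itself $q$-exceptional. The key computation is
\[
G_1^2\cdot\omega^{n-2}=G_1\cdot p^*\alpha\cdot\omega^{n-2}+G_1\cdot G_2\cdot\omega^{n-2}\geq 0,
\]
where both summands are non-negative: the first because $p^*\alpha$ is nef, $G_1$ is effective, and $\omega$ is Kähler; the second because $G_1$ and $G_2$ are effective with disjoint supports. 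On the other hand, by negative definiteness of the $\omega^{n-2}$-intersection form on $q$-exceptional $\R$-divisors (the Kähler-analytic analogue of the classical Zariski / Kawamata negativity lemma for proper bimeromorphic morphisms), $G_1^2\cdot\omega^{n-2}\leq 0$ with equality if and only if $G_1=0$. Hence $G_1=0$.

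With $G_1=0$ the relation becomes $-G_2\equiv p^*\alpha$. Intersecting with $\omega^{n-1}$ and using nefness of $p^*\alpha$ gives $G_2\cdot\omega^{n-1}\leq 0$, while effectivity of $G_2$ gives the reverse inequality; so $G_2\cdot\omega^{n-1}=0$, and Lemma \ref{lem:cohotozero} applied to the integration current $[G_2]$ forces $G_2=0$. Therefore $p^*E_1=p^*E_2$ as divisors on $W$, and applying $p_*$ we conclude $E_1=E_2$. The main obstacle is the negative definiteness of the $\omega^{n-2}$-pairing on $q$-exceptional $\R$-divisors in the Kähler-analytic setting with only a normal target $Y$: this is classical and well known in the projective category and extends to the Kähler setting by a Hodge-index-type argument on $W$ combined with the vanishing of $q_*$ on $q$-exceptional classes, but it warrants an explicit reference or short independent verification.
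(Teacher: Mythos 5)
Your reduction (resolve the indeterminacy, observe that the pullback of $E_1$ stays exceptional because $f^{-1}$ contracts no divisors, split into positive and negative parts, and finally kill $G_2$ by intersecting with $\omega^{n-1}$ and applying Lemma~\ref{lem:cohotozero}) is sound and parallels the paper's proof. The genuine gap is the step that kills $G_1$: the ``negative definiteness of the $\omega^{n-2}$-intersection form on $q$-exceptional $\R$-divisors'' for an arbitrary K\"ahler class $\omega$ on $W$ is simply false in dimension $\geq 3$, already in the projective category. For example, let $q\colon W\to \mathbb{P}^3$ be the blowup of a line, with exceptional divisor $E$ and $H$ the hyperplane class. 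Then $E^2\cdot \pi^*H=-1$ and $E^3=-2$, so for the class $\omega_\varepsilon=\pi^*H-\varepsilon E$, which is K\"ahler for every $\varepsilon\in(0,1)$, one gets $E^2\cdot\omega_\varepsilon=2\varepsilon-1>0$ once $\varepsilon>1/2$. So an effective, $q$-exceptional divisor can have strictly positive self-intersection against a K\"ahler class, and your key inequality $G_1^2\cdot\omega^{n-2}\geq 0$ yields no contradiction. The classical statements you allude to are different: Mumford/Zariski-type negative definiteness holds on surfaces, or in higher dimension only after intersecting with $(n-2)$ classes pulled back from the base (general hyperplane sections of $Y$), while Kawamata's negativity lemma concerns $q$-nef exceptional divisors; neither gives the assertion you use. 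Moreover, in the present setting $Y$ is merely a normal compact complex variety, so there are in general no K\"ahler or ample classes on $Y$ to pull back, which blocks the usual repair of this step.

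The paper replaces exactly this step by Nakayama's lemma \cite[Lemma III.5.1]{Nak04}: after arranging (via a further blowup and Hironaka's flattening) that $\pi:=f\circ g$ is a morphism and cancelling common components of $F_1:=g^*E_1$ and $F_2:=g^*E_2$, if $F_1\neq 0$ then some component $\Gamma$ of the effective $\pi$-exceptional divisor $F_1$ satisfies that $\{F_1\}|_\Gamma$ is not pseudoeffective; this contradicts $\{F_1\}|_\Gamma=\{F_2|_\Gamma\}+\{F_1-F_2\}|_\Gamma$, which is pseudoeffective because $F_2|_\Gamma$ is effective and $\{F_1-F_2\}$ is nef. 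If you substitute this argument for your negativity claim (your verification that the pulled-back divisor is exceptional, and your final step eliminating $G_2$, can stay essentially as they are), the proof goes through; as written, however, the crucial step rests on a false principle.
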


\begin{proof}
By taking a resolution of indeterminacies of the map $f$, then applying \cite[Corollary 2]{Hir75}, and then the resolution of singularities \cite[Theo\-rem~1.10]{BM97}, we conclude that there exists a bimeromorphic morphism \mbox{$g\colon Z\to X$} from a compact Kähler manifold $Z$, such that the induced map $\pi:=f\circ g$ is a projective morphism from $Z$ to $Y$, and the $\R$-divisors $F_1:=g^*E_1$ and $F_2:=g^*E_2$ have simple normal crossings. Then $F_1$ is $\pi$-exceptional and we have $F_1-F_2\in\pi^*\alpha$. It suffices to show that $F_1=F_2$. By cancelling common components, we may assume that $F_1$ and $F_2$ have no common components. It remains to show that $F_1=F_2=0$. We set $D:=F_1-F_2$.

Assume that $F_1\neq0$. Then, by \cite[Lemma III.5.1]{Nak04}, there exists a component $\Gamma$ of $F_1$ such that the class $\{F_1\}|_\Gamma$ is not pseudoeffective on $\Gamma$. On the other hand, the class 
$$\{F_1\}|_\Gamma=\{F_2\}|_\Gamma+\{D\}|_\Gamma=\{F_2|_\Gamma\}+\{D\}|_\Gamma$$
is pseudoeffective, since $F_2|_\Gamma$ is an effective $\R$-divisor and $\{D\}|_\Gamma$ is a nef class. This is a contradiction which shows that $F_1=0$.

Therefore, $D$ is a nef $\R$-divisor such that ${-}D=F_2$ is effective. By Lem\-ma~\ref{lem:pseflines}, this is only possible if $\{F_2\}=0$, hence $F_2=0$ by Lemma \ref{lem:cohotozero}(b). This finishes the proof.
\end{proof}

\subsection{On strong positivity}

The following lemma is well known, but we could not find an explicit reference.

\begin{lemma}\label{lem:localpositivity}
Let $X$ be a complex manifold and let $T$ be a current on $X$. Let $\{U_i\}_{i\in I}$ be an open covering of $X$.  Then $T$ is positive (respectively strongly positive) if and only if for each $i\in I$ the current $T|_{U_i}$ is positive (respectively strongly positive).
\end{lemma}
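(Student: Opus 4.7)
The plan is to prove both equivalences simultaneously by reducing every global positivity test to local positivity via a smooth partition of unity. The forward implication is essentially immediate, while the backward implication requires that multiplication by a nonnegative smooth function preserves strong positivity (and positivity) of forms.

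First, I would handle the easy direction. Suppose $T$ is positive (respectively strongly positive) on $X$ and fix some $i \in I$. Given a smooth strongly positive (respectively positive) test $(n-p,n-p)$-form $\varphi$ with compact support in $U_i$, its extension $\widetilde\varphi$ by zero is a smooth strongly positive (respectively positive) test form on $X$, since strong positivity and positivity are pointwise local properties. Hence $T|_{U_i}(\varphi) = T(\widetilde\varphi) \geq 0$, proving that $T|_{U_i}$ is positive (respectively strongly positive).

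For the converse, assume each restriction $T|_{U_i}$ is positive (respectively strongly positive). Since $X$ is paracompact, fix a smooth partition of unity $\{\rho_i\}_{i \in I}$ subordinate to the covering $\{U_i\}_{i \in I}$, so that each $\rho_i$ is nonnegative, $\Supp \rho_i \subseteq U_i$, and $\sum_i \rho_i \equiv 1$ on $X$. Let $\varphi$ be a smooth strongly positive (respectively positive) test $(n-p,n-p)$-form on $X$. Because $\Supp \varphi$ is compact, only finitely many $\rho_i$ are nonzero on $\Supp\varphi$, and we have the finite decomposition
\[
\varphi = \sum_i \rho_i \varphi.
\]
Each $\rho_i \varphi$ is a test form with compact support contained in $U_i$. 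Moreover, multiplication by the nonnegative function $\rho_i$ preserves strong positivity (respectively positivity) of forms, as is clear pointwise from the definitions recalled in \S\ref{section positivity of currents}. Therefore $T(\rho_i \varphi) = T|_{U_i}(\rho_i \varphi) \geq 0$ for every $i$, and summing yields $T(\varphi) = \sum_i T(\rho_i \varphi) \geq 0$.

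The only mild subtlety is the closure property ``$\rho \varphi$ is (strongly) positive when $\rho \geq 0$ is smooth and $\varphi$ is (strongly) positive,'' but this is immediate from the pointwise characterisations: a nonnegative real scalar combines with the local non-negative linear combinations $i\alpha_1\wedge\overline\alpha_1\wedge\cdots\wedge i\alpha_p\wedge\overline\alpha_p$ to produce forms of the same type, and likewise preserves positivity by compatibility with the wedge pairing against strongly positive forms. No deep input beyond paracompactness of $X$ and this elementary closure property is required.
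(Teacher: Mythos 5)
Your proposal is correct and follows essentially the same route as the paper: the easy direction by extending test forms by zero, and the converse by decomposing a test form via a partition of unity subordinate to the cover, using that multiplication by a nonnegative smooth cutoff preserves (strong) positivity, and summing the finitely many nonnegative terms $T(\rho_i\varphi)$. The paper's proof is identical in substance, only citing de Rham for the partition of unity.
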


\begin{proof}
The proof is analogous to that of \cite[Theorem 7]{dRh84}. One direction is clear. For the other direction, assume that each $T|_{U_i}$ is strongly positive; the case of positive currents is completely analogous. Let $\eta$ be a positive test form on $X$ and let $K$ be a compact subset of $X$ containing the support of $\eta$. Let $\{\psi_i\}_{i\in I}$ be a partition of unity subordinate to the cover $\{U_i\}_{i\in I}$ as in \cite[Corollary 2]{dRh84}. Then $K$ meets the supports of only finite many $\psi_i$; we denote these by $\psi_1,\dots,\psi_m$ and the corresponding members of the open covering by $U_1,\dots,U_m$. Since each $\psi_i\eta$ is a positive test form on $U_i$, we have $T(\psi_i\eta)\geq0$ by assumption. Therefore,
$$T(\eta)=T\bigg(\sum_{i=1}^m\psi_i\eta\bigg)=\sum_{i=1}^m T(\psi_i\eta)\geq0,$$
as desired.
\end{proof}

We are grateful to Shin-ichi Matsumura for communicating the proof of the following lemma which will be used in the proof of Theorem \ref{thm:LelongHK}.

\begin{lemma}\label{lem:strongly1}
Let $X$ be a complex manifold and let $T$ be a strongly positive $(p,p)$-current on $X$. Let $Z$ be an irreducible analytic subvariety of $X$ of codimension $p$ and let $0\leq\nu\leq\nu(T,Z)$. Then the $(p,p)$-current $T-\nu[Z]$ is strongly positive.
\end{lemma}

\begin{proof}
Since the current of integration $[Z]$ is strongly positive by \cite[III.\S1.20 and Theorem III.2.7]{Dem12a}, and since
$$T-\nu[Z]=\big(T-\nu(T,Z)[Z]\big)+\big(\nu(T,Z)-\nu\big)[Z],$$
we may assume that $\nu=\nu(T,Z)$.

By \cite[Proposition III.8.16]{Dem12a}, we have $\mathbf{1}_Z T=\nu [Z]$, hence $T - \nu [Z]=\mathbf{1}_{X \setminus Z} T$. It suffices to show that the current $\mathbf{1}_{X \setminus Z} T$ is strongly positive. The problem is local by Lemma \ref{lem:localpositivity}, so we may assume that $X$ is a coordinate ball. Let $\{\chi_k\}_{k\geq1}$ be smooth real functions on $X$ with compact support, whose supports exhaust $X\setminus Z$, and which converge pointwise to $\mathbf{1}_{X \setminus Z}$. Let~$\eta$ be a positive test form on $X$. Since the coefficients of $\mathbf{1}_{X \setminus Z} T$ are Radon measures, we have
$$ \mathbf{1}_{X \setminus Z} T(\eta)=\lim_{k\to\infty}\mathbf{1}_{X \setminus Z} T(\chi_k\eta)=\lim_{k\to\infty}T(\chi_k\eta) \geq 0. $$
This finishes the proof.
\end{proof}

We will need the following result several times in the paper.

\begin{lemma}\label{lem:strongly2}
Let $X$ be a compact complex manifold of dimension $n$ and let $T$ be a strongly positive $(p,p)$-current on $X$. Let $\alpha\in H^{1,1}_\mathrm{BC}(X,\R)$ be a pseudoeffective class, let $\theta\in\alpha$ be a smooth form, and let $\varphi\in\PSH(X,\theta)$ such that the unbounded locus of $\varphi$ is contained in an analytic set of dimension at most $n-p-1$. Set $T':=\theta+dd^c\varphi$. Then the wedge product 
$$T\wedge T':=T\wedge\theta+dd^c(\varphi T)$$
is a well-defined strongly positive $(p+1,p+1)$-current.
\end{lemma}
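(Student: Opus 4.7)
The plan is to reduce the statement to a local question on a coordinate ball and then invoke Demailly's extension of the Bedford--Taylor theory of Monge--Amp\`ere operators to psh functions whose unbounded locus is analytic and sufficiently thin.

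By Lemma \ref{lem:localpositivity}, strong positivity is a local property, so it suffices to establish the claim on each sufficiently small coordinate ball $\Omega\subseteq X$. On such a ball, the local $\partial\overline{\partial}$-lemma yields a smooth real function $\psi$ with $\theta=dd^c\psi$, and setting $u:=\psi+\varphi$ we have $dd^c u=\theta+dd^c\varphi\geq 0$, so $u$ is psh on $\Omega$. Moreover, the unbounded locus of $u$ coincides with that of $\varphi$, and is therefore contained in an analytic subset $A\subseteq\Omega$ of complex dimension at most $n-p-1$.

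I would then appeal to Demailly's theorem (see \cite[Chapter III]{Dem12a}): for a closed strongly positive $(p,p)$-current $T$ and a psh function $u$ whose unbounded locus lies in an analytic set of complex dimension at most $n-p-1$, the trace measure $\sigma_T$ does not charge this analytic set (which has complex codimension strictly larger than $p$), so $u$ is locally $\sigma_T$-integrable and $uT$ is a well-defined $(p,p)$-current; moreover, $dd^c u\wedge T:=dd^c(uT)$ is a well-defined strongly positive $(p+1,p+1)$-current, obtained as the weak limit of the strongly positive currents $dd^c u_k\wedge T$ for any decreasing sequence of smooth psh regularizations $u_k\searrow u$. Applied to our $u$, this furnishes the desired well-definedness and strong positivity on $\Omega$.

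Finally, since $\psi$ is smooth and $T$ is closed, the product rule gives $dd^c(\psi T)=dd^c\psi\wedge T=\theta\wedge T=T\wedge\theta$, so
\[
dd^c(uT)\;=\;dd^c(\psi T)+dd^c(\varphi T)\;=\;T\wedge\theta+dd^c(\varphi T),
\]
which identifies the expression defining $T\wedge T'$ with the strongly positive current $dd^c u\wedge T$ on $\Omega$. The main obstacle is to apply the correct variant of Demailly's wedge product construction: we are at the borderline case where the complex dimension of the unbounded locus of $u$ is exactly one less than the bidimension of $T$, so the integrability of $u$ with respect to $\sigma_T$ and the continuity of the wedge product under regularization must be drawn from the precise version of the theorem that accommodates singular psh functions rather than just locally bounded ones.
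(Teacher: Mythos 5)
Your argument is correct and follows essentially the same route as the paper: localize via Lemma \ref{lem:localpositivity}, absorb $\theta$ into a local psh potential, invoke Demailly's Bedford--Taylor-type results (well-definedness under the dimension condition on the unbounded locus, plus monotone continuity along a decreasing smooth psh regularization), and conclude strong positivity because each smooth approximant $dd^c u_k\wedge T$ is strongly positive and weak limits of strongly positive currents are strongly positive. The only cosmetic difference is that you write $u=\psi+\varphi$ explicitly, while the paper simply assumes $T'=dd^c\varphi$ on the coordinate ball and cites \cite[Corollary III.4.10, Corollary III.4.3, Corollary III.1.16]{Dem12a} for the corresponding steps.
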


\begin{proof}
The wedge product is well defined by \cite[Corollary III.4.10]{Dem12a}. To show that it is strongly positive, we follow the proof of \cite[Proposition~III.3.2]{Dem12a}. The problem is local by Lemma \ref{lem:localpositivity}, so we may assume that $X$ is a coordinate ball and that $T'=dd^c\varphi$, where $\varphi$ is psh on $X$. Pick a smoothing kernel $\rho\in C^\infty(X,\R)$, set $\rho_k(x):=k^{2n}\rho(kx)$, and consider the convolution $\varphi_k:=\varphi*\rho_k$ for $k\in\N_{>0}$. Then $\{\varphi_k\}_{k>0}$ is a decreasing family of smooth psh functions on $X$ such that $\displaystyle\lim_{k\to\infty}\varphi_k=\varphi$ pointwise, hence
\begin{equation}\label{eq:09a}
\lim_{k\to\infty}dd^c(\varphi_kT)=dd^c(\varphi T)
\end{equation}
by \cite[Corollary III.4.3]{Dem12a}. Since each $\varphi_k$ is smooth, we have $dd^c(\varphi_kT)=dd^c\varphi_k\wedge T$, hence $dd^c(\varphi_kT)$ is strongly positive for each $k$ by \cite[Corollary~III.1.16]{Dem12a}. Therefore, \eqref{eq:09a} yields that $T\wedge T'=dd^c(\varphi T)$ is the weak limit of a sequence of strongly positive currents, thus it is strongly positive.
\end{proof}

\section{Semiampleness results}\label{sec:semiampleness}

In this section, we prove semiampleness results which will be used in the proofs of Corollaries \ref{cor:main1}, \ref{cor:main2} and \ref{cor:main3}, and are also of independent interest. 

We start with the following theorem, which generalises \cite[Theo\-rem~7.3]{Kaw85b} and \cite[Theorem 1.35]{DO24}. The proof is essentially the same; the main difference is that we invoke Lemma \ref{lem:nefexceptional}, allowing us to bypass an argument relying on restriction to a complete intersection surface, which does not work in the Kähler setting.

\begin{theorem}\label{thm:kappa>0}
Assume the existence of minimal models and the Abundance Conjecture for compact Kähler klt pairs in dimension at most $n-1$.

Let $(X, \Delta)$ be a compact $\Q$-factorial K\"ahler klt pair of dimension $n$ such that $K_X+\Delta$ is nef. If $\kappa(X, K_X+\Delta)\geq 1$, then $K_X+\Delta$ is semiample.

In particular, if $n\leq4$, then the result holds unconditionally.
\end{theorem}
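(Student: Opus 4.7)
The proof follows the strategy of \cite[Theorem 7.3]{Kaw85b} and \cite[Theorem 1.35]{DO24}, with Lemma \ref{lem:nefexceptional} replacing an application of the Hodge index theorem on surfaces that is unavailable in the Kähler setting. The plan is to pass to the Iitaka fibration of $K_X+\Delta$, reduce to fibrewise triviality via the inductive hypotheses, and then descend back to $X$.

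First I would take a log resolution $\pi\colon Y\to X$ and write $K_Y+\Gamma=\pi^*(K_X+\Delta)+E$ with $(Y,\Gamma)$ klt and $E\geq 0$ a $\pi$-exceptional $\Q$-divisor. Applying Lemma \ref{lem:iitaka} to $K_X+\Delta$ and possibly replacing $\pi$ by a higher resolution, I obtain a fibration $f\colon Y\to Z$ onto a projective manifold with $\dim Z=\kappa(X,K_X+\Delta)\geq 1$, such that
$$\kappa\big(G,(K_Y+\Gamma)|_G\big)=\kappa\big(G,\pi^*(K_X+\Delta)|_G+E|_G\big)=0$$
for a very general fibre $G$. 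The central claim is that $\pi^*(K_X+\Delta)|_G\sim_\Q 0$ for such~$G$. Since $(G,\Gamma|_G)$ is klt of dimension at most $n-1$, the inductive hypotheses give a good minimal model $\mu\colon G\dashrightarrow G'$; as the Kodaira dimension is zero, $K_{G'}+\mu_*(\Gamma|_G)\sim_\Q 0$. Taking a Kähler common resolution $p\colon W\to G$, $q\colon W\to G'$ of $\mu$, the negativity-of-contractions argument from the MMP produces an effective $q$-exceptional $\Q$-divisor $F$ with
$$p^*(K_G+\Gamma|_G)=q^*\bigl(K_{G'}+\mu_*(\Gamma|_G)\bigr)+F\sim_\Q F.$$
Setting $D:=\pi^*(K_X+\Delta)|_G$, the pullback $p^*D$ is nef on $W$, and the identity $p^*D+p^*(E|_G)\sim_\Q F$ shows that $\{p^*D\}$ is represented by the divisor $F-p^*(E|_G)$, with $F\geq 0$ being $q$-exceptional and $p^*(E|_G)\geq 0$. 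Because $q^{-1}$ does not contract divisors, Lemma \ref{lem:nefexceptional} forces $F=p^*(E|_G)$, hence $p^*D\sim_\Q 0$ and therefore $D\sim_\Q 0$. This is precisely the step where Lemma \ref{lem:nefexceptional} substitutes for the Hodge index theorem.

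With fibrewise triviality in hand, a standard Iitaka-fibration descent argument (equivalently, a canonical bundle formula for $f$) yields a nef $\Q$-line bundle $B$ on the projective base $Z$ with $\pi^*(K_X+\Delta)\sim_\Q f^*B$ and $\kappa(Z,B)=\dim Z$; thus $B$ is big and nef, hence semiample on the smooth projective $Z$ by the basepoint-free theorem \cite[Theorem~3.3]{KM98}. The associated morphism factors through $\pi$ because every $\pi$-exceptional curve is $\pi^*(K_X+\Delta)$-trivial, which produces the morphism on $X$ witnessing the semiampleness of $K_X+\Delta$. The unconditional $n\leq 4$ case follows from the known Kähler MMP and abundance in dimension $3$ \cite{CHP16,DO23,DO24,GP24}. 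The most delicate point of the plan is the descent: upgrading fibrewise $\Q$-triviality to a genuine $\Q$-linear equivalence $\pi^*(K_X+\Delta)\sim_\Q f^*B$ (rather than merely a numerical one) relies crucially on $f$ being the Iitaka fibration, so that no vertical twist can persist.
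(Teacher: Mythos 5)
Your reduction to fibrewise triviality is essentially the paper's argument: Lemma \ref{lem:iitaka} to get the fibration, the inductive hypothesis to produce a good minimal model of the klt fibre with $\kappa=0$, the negativity lemma to write $K_F+\Delta_F\sim_\Q(\text{exceptional divisor})$, and Lemma \ref{lem:nefexceptional} to conclude $\pi^*(K_X+\Delta)|_F\sim_\Q 0$. Up to that point the proposal is sound and matches the paper (the paper also first disposes of the cases $\kappa\geq n-1$, where $\nd\in\{n-1,n\}$, via the basepoint-free theorem and the Nakayama--Fujino semiampleness criterion, so that the fibres of the Iitaka fibration have dimension at least $2$; you skip this, which is harmless for the fibrewise step but leaves the boundary cases untreated).

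The genuine gap is your final descent. From $\pi^*(K_X+\Delta)|_G\sim_\Q 0$ on \emph{very general} fibres you assert a global $\Q$-linear equivalence $\pi^*(K_X+\Delta)\sim_\Q f^*B$ with $B$ nef and big on $Z$, calling this a "standard Iitaka-fibration descent argument (equivalently, a canonical bundle formula)". No such statement is available here: fibrewise $\Q$-triviality on very general fibres only forces an effective representative $D\sim_\Q\pi^*(K_X+\Delta)$ to be vertical, and vertical divisors need not be pullbacks from $Z$ (components of special fibres can carry a "vertical twist"); moreover a canonical bundle formula would apply to $K_Y+\Gamma$, which differs from $\pi^*(K_X+\Delta)$ by the exceptional divisor $E$, and the relevant b-divisor machinery is not established in the Kähler category at this level of generality. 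You flag this as the delicate point but give no argument, and the argument is exactly what is missing. The paper avoids the descent altogether: taking $D\geq0$ with $D\sim_\Q\pi^*(K_X+\Delta)$ (possible since $\kappa\geq1$), it invokes \cite[Lemma 1.34]{DO24} to obtain only the inequality $D\leq f^*H$ for some ample $H$ on $Z$, then compares numerical dimensions via \cite[Proposition 2.10]{Nak87} and Lemma \ref{lem:numdim} to get $\nd(K_X+\Delta)=\kappa(X,K_X+\Delta)$, and concludes semiampleness from \cite[Theorem 5.5]{Nak87} and \cite[Theorem 4.8]{Fuj11c}. To repair your proof you would either need to prove the descent $\pi^*(K_X+\Delta)\sim_\Q f^*B$ (a substantial statement in its own right) or replace that step by the weaker comparison $D\leq f^*H$ plus the $\kappa=\nd$ semiampleness criterion, as the paper does.
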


\begin{proof}
The last statement of the theorem follows from the first one by \cite[Theorems~1.1 and 1.2]{DH25}, \cite[Theorem 1.1]{DO24}, and \cite[Theorem 1.2]{DO23}. In the remainder of the proof, we show the first statement.

\medskip

\emph{Step 1.}
In this step, we reduce to the case
\begin{equation}\label{eq:200f}
\kappa(X, K_X+\Delta)\leq n-2.
\end{equation}

Assume first that $\kappa(X,K_X+\Delta)\geq n-1$. If $\nd(K_X+\Delta)=n$, then $X$ is projective by Lemma~\ref{lem:Moishezon}, and $K_X+\Delta$ is nef and big, so we conclude by the Basepoint free theorem \cite[Theorem 3.3]{KM98}. If $\nd(K_X+\Delta)=n-1$, then we conclude by \cite[Theorem 5.5]{Nak87} and \cite[Theorem 4.8]{Fuj11c}.
        
\medskip

\emph{Step 2.}
By Lemma \ref{lem:iitaka}, there exists a log resolution $\pi\colon Y\to X$ of the pair $(X,\Delta)$ and a fibration $f\colon Y\to Z$ to a projective manifold $Z$ such that \mbox{$\dim Z=\kappa(X,K_X+\Delta)$}, and such that for every $\pi$-exceptional $\Q$-divisor $E\geq0$ on $Y$ and for a very general fibre $F$ of $f$, we have
\begin{equation}\label{eq:200a}
\kappa\big(F,(\pi^*(K_X+\Delta)+E)|_F\big)=0.
\end{equation}
Fix such a fibre $F$ of $f$ for the remainder of the proof. Then $\dim F\geq2$ by~\eqref{eq:200f} and $F$ is smooth by generic smoothness.

\medskip

\emph{Step 3.}
We show in this step that
\begin{equation}\label{eq:600}
\pi^*(K_X+\Delta)|_F\sim_\Q 0.
\end{equation}

To this end, we may write
\begin{equation}\label{eq:200}
K_Y+\Delta_Y\sim_\Q\pi^*(K_X+\Delta)+E_1
\end{equation}
where $\Delta_Y$ and $E_1$ are effective $\Q$-divisors without common components. In particular, $E_1$ is $\pi$-exceptional. If we denote $\Delta_F:=\Delta_Y|_F$, then
$$K_F + \Delta_F\sim_\Q \big(\pi^*(K_X+\Delta)+E_1\big)|_F$$
by \eqref{eq:200} and by adjunction, and the pair $(F,\Delta_F)$ is klt. In particular, we have
\begin{equation}\label{eq:200b}
\kappa(F,K_F+\Delta_F)=0
\end{equation}
by \eqref{eq:200a}.

Since $\dim Z=\kappa(X,K_X+\Delta)\geq1$, we have $\dim F\leq n-1$. By assumption, there exists a minimal model $\varphi\colon (F,\Delta_F)\dashrightarrow (F',\Delta_{F'})$ such that $K_{F'}+\Delta_{F'}$ is semiample. Since $\kappa(F',K_{F'}+\Delta_{F'})=\kappa(F,K_F+\Delta_F)=0$ by \eqref{eq:200b}, we con\-clude that
\begin{equation}\label{eq:200c}
K_{F'}+\Delta_{F'}\sim_\Q 0.
\end{equation}

Consider a resolution of indeterminacies $(p,q)\colon \widehat{F}\to F\times F'$ of the map~$\varphi$. Then by the Negativity lemma \cite[Lemma 1.3]{Wan21} and by \eqref{eq:200c}, there exists a $q$-exceptional $\Q$-divisor $\widehat E\geq0$ such that
$$ p^*(K_F+\Delta_F)\sim_\Q q^*(K_{F'}+\Delta_{F'})+\widehat E\sim_\Q \widehat E. $$
If we denote $E_2:=p_*\widehat{E}$, then $E_2$ is $\varphi$-exceptional and
$$ K_F+\Delta_F\sim_\Q E_2. $$
This and \eqref{eq:200} give
$$\pi^*(K_X+\Delta)|_F\sim_\Q E_2-E_1|_F,$$
which together with Lemma \ref{lem:nefexceptional} implies \eqref{eq:600}.

\medskip

\emph{Step 4.}
In this final step, we finish the proof. Since $\kappa(X,K_X+\Delta)\geq1$, there exists a $\Q$-divisor $D\geq0$ on $Y$ such that $D\sim_\Q \pi^*(K_X+\Delta)$. Hence, by Step~3 and by \cite[Lemma 1.34]{DO24}, there exists an ample $\Q$-divisor $H$ on $Z$ such that $D\leq f^*H$. Therefore, by \cite[Pro\-po\-si\-tion 2.10]{Nak87} and by Lemma~\ref{lem:numdim}, we have
\begin{align*}
\nd(K_X+\Delta)&=\nd\big(\pi^*(K_X+\Delta)\big)=\nd(D)\leq \nd(f^*H)=\kappa(Y, f^*H)\\
&=\kappa(Z, H)=\dim Z=\kappa(X, K_X+\Delta)\leq\nd(K_X+\Delta).
\end{align*}
Thus, $\kappa(X, K_X+\Delta)=\nd(K_X+\Delta)$, and we conclude by \cite[Theo\-rem~5.5]{Nak87} and \cite[Theorem 4.8]{Fuj11c}.
\end{proof}

The following theorem generalises \cite[Theorem 5.1]{GM17} to the Kähler setting, and the proof follows that of op.\ cit.\ very closely. We include a de\-ta\-iled proof for the benefit of the reader.

\begin{theorem}\label{thm:GM}
Assume the Minimal Model Program and the Abundance Conjecture for compact Kähler klt pairs in dimension at most $n-1$.

Let $(X, \Delta)$ be a compact $\Q$-factorial K\"ahler klt pair of dimension $n$ such that $K_X+\Delta$ is nef and $\kappa(X,K_X+\Delta)\geq0$. Let $\pi\colon Y \to X$ be a resolution and assume that there exists a closed positive current $T\in\{\pi^*(K_X+\Delta)\}$, all of whose Lelong numbers vanish. Then $K_X+\Delta$ is semiample.

In particular, if $n\leq4$, then the result holds unconditionally.
\end{theorem}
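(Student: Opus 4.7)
The assertion for $n\leq 4$ follows from the first part of the theorem together with the unconditional Kähler MMP/abundance results \cite[Theorems~1.1 and~1.2]{DH25}, \cite[Theorem~1.1]{DO24}, and \cite[Theorem~1.2]{DO23}, so I focus on the main statement.

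My first step is to reduce to the case $\kappa(X,K_X+\Delta)=0$. Indeed, by Theorem \ref{thm:kappa>0}, which is available under the assumed MMP and abundance in dimension at most $n-1$, if $\kappa(X,K_X+\Delta)\geq 1$ then $K_X+\Delta$ is already semiample. I may therefore assume $\kappa(X,K_X+\Delta)=0$, in which case semiampleness of $K_X+\Delta$ is equivalent to $K_X+\Delta\sim_\Q 0$. Since $\kappa(X,K_X+\Delta)=0$, there exists an effective $\Q$-divisor $D\geq 0$ on $X$ with $D\sim_\Q K_X+\Delta$, and it suffices to prove $D=0$; I argue by contradiction and assume $D\neq 0$.

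Next, I set $\alpha:=\{\pi^*(K_X+\Delta)\}\in H^{1,1}_{\mathrm{BC}}(Y,\R)$ and exploit the two competing representations of $\alpha$. The current $[\pi^*D]$ lies in $\alpha$ and has strictly positive Lelong numbers along each prime component of $\pi^*D$. On the other hand, the hypothesis on $T$ combined with \eqref{eq:Tmin} forces the current with minimal singularities $T_{\min}\in\alpha$ to have vanishing Lelong numbers everywhere, so the Siu decomposition of $T_{\min}$ has trivial divisorial part. Consequently, $\alpha$ is Siu-residual in the sense of Definition \ref{dfn:Siuclasses}, hence modified nef by Remark \ref{rem:rigid1}, and the class $D(\alpha)=0$ is rigid by Remark \ref{rem:rigid2}.

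The main step, and what I expect to be the principal obstacle, is to deduce the contradiction $D=0$ from this setup, closely following \cite[Theorem~5.1]{GM17}. The plan is to pass to a $\Q$-factorial dlt modification of $(X,\Delta)$ via Theorem \ref{thm:dltmodel} (available under the lower-dimensional MMP assumption) in order to rigidify the birational setup, and then to combine the assumed abundance in lower dimensions with the rigidity of $D(\alpha)$ and a repeated application of Lemma \ref{lem:nefexceptional} to compare the two representations $[\pi^*D]$ and $T_{\min}$ of $\alpha$. The guiding principle is that the Siu-residuality of $\alpha$ (no current in $\alpha$ has nontrivial minimal divisorial content) is incompatible, under the standing assumption $\kappa(X,K_X+\Delta)=0$ and the nefness of $K_X+\Delta$, with the existence of a nonzero effective divisorial representative $\pi^*D\in\alpha$: the expected outcome of the Boucksom-type decomposition analysis is that every prime component of $D$ must have trivial class on a suitable dlt model, and therefore vanish by Lemma \ref{lem:cohotozero}. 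The principal technical difficulty is translating the line-bundle and ample-divisor arguments of the projective proof into the Kähler setting using Bott--Chern cohomology, currents with local potentials as in Remark \ref{remark currents with potentials}, and the Siu-decomposition framework developed in \S\ref{subsec:siudecomcurrents}, rather than algebraic intersection-theoretic manipulations.
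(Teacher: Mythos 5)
Your reductions are fine and agree with the paper: the $n\leq4$ statement via \cite{DH25,DO24,DO23}, the reduction to $\kappa(X,K_X+\Delta)=0$ via Theorem \ref{thm:kappa>0}, and the reformulation as ``$D\sim_\Q K_X+\Delta$ effective, show $D=0$''. But the core of the argument is missing, and the ``guiding principle'' you propose to close it is not correct. Siu-residuality of $\alpha=\{\pi^*(K_X+\Delta)\}$ only says that the current with minimal singularities in $\alpha$ has no divisorial part; it is in no way incompatible with $\alpha$ also containing the nonzero effective divisor $\pi^*D$. (Think of the class of a fibre of a fibration: it is nef, contains a smooth semipositive form, hence is Siu-residual, and simultaneously contains a nonzero integration current.) Rigidity of $D(\alpha)$ gives nothing here because $D(\alpha)=0$, and Lemma \ref{lem:nefexceptional} compares an exceptional divisor with an effective divisor in a nef class on a fixed bimeromorphic model --- there is no exceptional divisor in your setup to feed into it. So the statement ``every prime component of $D$ must have trivial class, hence vanish by Lemma \ref{lem:cohotozero}'' is essentially a restatement of the desired conclusion, not a consequence of the decomposition formalism; no purely cohomological/current-theoretic manipulation of $\alpha$ can yield it, since the conclusion genuinely uses the adjoint structure and the lower-dimensional MMP/abundance hypotheses.

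What the paper actually does in the $\kappa=0$ case is quite different, and none of its key ingredients appear in your plan. One sets $\ell$ equal to the log canonical threshold of $D$ with respect to $(X,\Delta)$ and takes a dlt model $\varphi\colon(Z,\Delta_Z)\to X$ of $(X,\Delta+\ell D)$ via Theorem \ref{thm:dltmodel}, so that $S:=\lfloor\Delta_Z\rfloor\neq0$, $K_Z+S+B\sim_\Q(1+\ell)\varphi^*D$, and $S\subseteq\Supp\varphi^*D$. The hypothesis on $T$ is then used analytically, not cohomologically: after arranging that $\pi$ factors through $\varphi$ (the vanishing of Lelong numbers is preserved under pullback by \cite[Corollary~4]{Fav99}), the extension theorem \cite[Corollary~4.2]{GM17} --- which requires precisely a positive current in the class with vanishing Lelong numbers --- shows that the restriction maps $H^0(Z,m(K_Z+S+B))\to H^0(S,m(K_S+B|_S))$ are surjective for sufficiently divisible $m$. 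Since $\kappa(Z,K_Z+S+B)=\kappa(X,D)=0$ and $S\subseteq\Supp\varphi^*D$, these restriction maps are zero, so $h^0(S,m(K_S+B|_S))=0$; on the other hand $(S,B|_S)$ is a lower-dimensional Kähler slc pair with $K_S+B|_S$ nef, and the assumed MMP/abundance together with \cite[Theorem~1.1]{Fuj25} make $K_S+B|_S$ semiample, a contradiction. These steps --- the lc-threshold dlt model with nonempty boundary inside $\Supp\varphi^*D$, the $L^2$-type extension from the boundary using the vanishing Lelong numbers, and slc abundance on the boundary --- are the actual content of the proof and are absent from your proposal, so as it stands the argument has a genuine gap.
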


\begin{proof}
The last statement of the theorem follows from the first one by \cite[Theorems~1.1 and 1.2]{DH25}, \cite[Theorem 1.1]{DO24}, and \cite[Theorem 1.2]{DO23}. In the remainder of the proof we show the first statement.

If $\kappa(X,K_X+\Delta)\geq1$, then we conclude by Theorem \ref{thm:kappa>0}. Thus, we may assume that
\begin{equation}\label{eq:100}
\kappa(X,K_X+\Delta)=0.
\end{equation}

\medskip

\emph{Step 1.}
By hypothesis, there exists a $\Q$-divisor $D\geq0$ such that
$$K_X+\Delta\sim_\Q D.$$
If $D=0$, we are done. Therefore, we may assume that $D\neq0$, and set
$$\ell:=\max\big\{t\geq0\mid (X,\Delta+tD)\text{ is log canonical}\big\}\in\Q.$$
By Theorem \ref{thm:dltmodel}, there exists a $\Q$\nobreakdash-factorial Kähler dlt pair $(Z,\Delta_Z)$ and a pro\-jec\-tive bimeromorphic morphism $\varphi\colon Z\to X$ such that 
$$K_Z+\Delta_Z\sim_\Q\varphi^*(K_X+\Delta+\ell D).$$
Set $S:=\lfloor\Delta_Z\rfloor$ and $B:=\Delta_Z-S$; note that $S\neq0$ since the pair $(X, \Delta+lD)$ is log canonical but not klt. Then we have
\begin{equation}\label{eq:101}
K_Z+S+B\sim_\Q (1+\ell)\varphi^*D.
\end{equation}

\medskip

\emph{Step 2.}
We will prove in Step 3 that the restriction map
\begin{equation}\label{eq extension of sections}
\rho_m\colon H^0\big(Z, m(K_Z+S+B)\big) \to H^0\big(S, m(K_S+B|_S)\big)
\end{equation}
is surjective for each sufficiently divisible positive integer $m$. In this step, we show how this surjectivity implies the theorem.

Indeed, note that 
$$\kappa(Z,K_Z+S+B)=\kappa(X,D)=0$$
by \eqref{eq:100} and \eqref{eq:101}. Note further that $S \subseteq \Supp\varphi^*D$ since $S$ maps onto a log canonical centre of the pair $(X,\Delta+\ell D)$, which must lie in $D$ as $(X,\Delta)$ is klt. These two facts immediately give that for each sufficiently divisible $m$, the map $\rho_m$ is the zero-morphism. Therefore, the claim implies that
\begin{equation}\label{eq:102}
h^0\big(S, m(K_S+B|_S)\big)=0\quad\text{for all }m\text{ sufficiently divisible.}
\end{equation}
On the other hand, the pair $(S, B|_S)$ is a Kähler slc pair such that $K_S+B|_S$ is nef, since $K_Z+S+B$ is nef by \eqref{eq:101}. But then $K_S+B|_S$ is semiample by the assumptions of the theorem and by \cite[Theorem 1.1]{Fuj25}, which contradicts~\eqref{eq:102}. 

\medskip

\emph{Step 3.}
It remains to prove the claim about surjectivity of the restriction map $\rho_m$ from \eqref{eq extension of sections}. By blowing up further, we may assume that the morphism $\pi$ factors through $\varphi$. We denote by  $f\colon Y\to Z$ the induced morphism. Note that the condition of the theorem on the Lelong numbers of the current $T$ is preserved by taking pullbacks by \cite[Corollary 4]{Fav99}.

There exist effective $\Q$-divisors $S_Y$, $B_Y$ and $E_Y$, pairwise without common components, such that
\begin{equation}\label{eq:103}
K_Y+S_Y+B_Y\sim_\Q f^*(K_Z+S+B)+E_Y,
\end{equation}
where $S_Y=\lfloor S_Y+B_Y\rfloor$, and the divisor $E_Y$ is $f$-exceptional. Then, as in Step 2, we have $S_Y \subseteq \Supp\pi^*D$, and \eqref{eq:103} gives
\begin{equation}\label{eq:104}
K_Y+S_Y+B_Y\sim_\Q \pi^*D+E_Y
\end{equation}
and
\begin{equation}\label{eq:105}
K_{S_Y}+B_Y|_{S_Y}\sim_\Q (f|_{S_Y})^*(K_S+B|_S)+E_Y|_{S_Y}.
\end{equation}

Now, fix a positive integer $m$ such that $m(K_Z+S+B)$, $mD$ and $mE_Y$ are are Cartier. Let $s\in H^0(Y,mE_Y)$ be a section whose zero-divisor is $mE_Y$. By \eqref{eq:103} and \eqref{eq:105}, we have the commutative diagram
		\begin{center}
			\begin{tikzcd}
				H^0\big(Z,m(K_Z+S+B)\big) \arrow[rr, "\psi"] \arrow[d, "\rho_m" swap] && H^0\big(Y,m(K_Y+S_Y+B_Y)\big) \arrow[d, "\rho_m'"] \\
				H^0\big(S, m(K_S+B|_S)\big) \arrow[rr, "\psi'"] & & H^0\big(S_Y,m(K_{S_Y}+B_Y|_{S_Y})\big) ,
			\end{tikzcd}
		\end{center}
where the vertical maps are restrictions, the map $\psi$ sends a section $t$ to~$s\cdot f^*t$, and the map $\psi'$ sends a section $t'$ to $s|_{S_Y}\cdot (f|_{S_Y})^*t'$. Note that $\psi$ is an isomorphism, and $\psi'$ is injective by the projection formula, since we have $(f|_{S_Y})_*\sO_{S_Y}=\sO_{S}$ by \cite[Theorem 5.48]{KM98}.

Pick $u\in H^0\big(S, m(K_S+B|_S)\big)$. Then, by~\eqref{eq:104} and by \cite[Corol\-la\-ry~4.2]{GM17}, the section
$$\psi'(u)\in H^0\big(S_Y,m(K_{S_Y}+B_Y|_{S_Y})\big)$$
extends to a section
$$U'\in H^0\big(Y,m(K_Y+S_Y+B_Y)\big);$$
the assumption on the existence of a current $T$ with vanishing Lelong numbers as in the statement of the theorem is used here. Therefore, the preimage $U\in H^0\big(Z,m(K_Z+S+B)\big)$ of $U'$ under the iso\-mor\-phism $\psi$ is a section such that $\rho_m(U)=u$, as desired.
\end{proof}

\section{Hyperkähler preliminaries}

A \emph{hyperkähler manifold} is a simply connected compact Kähler manifold whose space of global holomorphic $2$-forms is spanned by a holomorphic symplectic form. It is always of even dimension.

\subsection{Beauville--Bogomolov--Fujiki form}

We recall that by \cite{Bea83}, on a hyperkähler manifold $X$ of dimension $2n$ there exists a unique non-degenerate primitive integral bilinear form 
$$q_X\colon H^2(X,\R)\times H^2(X,\R)\to \R$$
of signature $(1,h^{1,1}(X)-1)$ on $H^{1,1}(X,\R)$ which satisfies the \emph{Fujiki relation}: there exists a positive rational number $c_X$ depending only on the deformation class of $X$ such that for each $\alpha \in H^2(X,\R)$ we have
$$q_X(\alpha,\alpha)^n = c_X \int_X\alpha^{2n}.$$
We refer to $q_X$ as the \emph{Beauville--Bogomolov--Fujiki form}, or \emph{BBF form} for short.

We include the proof of the following known lemma for the sake of completeness.

\begin{lemma}\label{lem:Fujikirelation}
Let $X$ be a hyperkähler manifold of dimension $2n$ and let $q_X$ be the BBF form on $X$. Then there exists a positive rational number $c$ such that for all $\alpha_1,\dots,\alpha_{2n}\in H^2(X,\R)$ we have
$$ \alpha_1\cdot\ldots\cdot\alpha_{2n}=c\sum_{\rho\in S_{2n}}q_X(\alpha_{\rho(1)},\alpha_{\rho(2)})\cdot q_X(\alpha_{\rho(3)},\alpha_{\rho(4)})\cdot\ldots\cdot q_X(\alpha_{\rho(2n-1)},\alpha_{\rho(2n)}). $$
\end{lemma}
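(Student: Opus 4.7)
The plan is to apply a standard polarization argument to two symmetric $2n$-multilinear forms on $H^2(X,\R)$. Define
$$F(\alpha_1,\ldots,\alpha_{2n}) := \alpha_1\cdot\alpha_2\cdots\alpha_{2n} \in \R,$$
interpreting the right-hand side as the evaluation of the cup product on the fundamental class; this is symmetric in its arguments because the $\alpha_i$ all lie in the even-degree group $H^2(X,\R)$. Also define
$$G(\alpha_1,\ldots,\alpha_{2n}) := \sum_{\rho\in S_{2n}} q_X(\alpha_{\rho(1)},\alpha_{\rho(2)})\cdots q_X(\alpha_{\rho(2n-1)},\alpha_{\rho(2n)}),$$
which is manifestly symmetric as a symmetrization. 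Both $F$ and $G$ are $2n$-multilinear, so the content of the lemma is that they are proportional.

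The key input is the polarization identity: any symmetric $d$-multilinear form $\Phi$ on a real vector space is determined by the polynomial function $\alpha\mapsto \Phi(\alpha,\ldots,\alpha)$ through the formula
$$\Phi(\alpha_1,\ldots,\alpha_d) = \frac{1}{d!}\sum_{S\subseteq\{1,\ldots,d\}}(-1)^{d-|S|}\Phi\Bigl(\textstyle\sum_{i\in S}\alpha_i,\ldots,\sum_{i\in S}\alpha_i\Bigr).$$
Thus, to prove $F = cG$ as symmetric multilinear forms, it suffices to check equality on the diagonal $\alpha_1=\cdots=\alpha_{2n}=\alpha$.

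Next I would evaluate both sides on the diagonal. By the Fujiki relation, $F(\alpha,\ldots,\alpha) = \int_X \alpha^{2n} = c_X^{-1}\, q_X(\alpha,\alpha)^n$, while $G(\alpha,\ldots,\alpha) = (2n)!\, q_X(\alpha,\alpha)^n$. Setting $c := \frac{1}{(2n)!\, c_X}\in\Q_{>0}$ gives $F(\alpha,\ldots,\alpha) = c\, G(\alpha,\ldots,\alpha)$ for every $\alpha\in H^2(X,\R)$, and polarization then upgrades this to the identity $F = cG$ as symmetric multilinear forms, which is exactly the claimed equation.

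There is essentially no obstacle here; the only things to verify are the symmetry of $F$ (cup product commutativity on even-degree classes), the rationality of $c$ (immediate from $c_X\in\Q_{>0}$), and the correctness of the polarization computation, all of which are routine.
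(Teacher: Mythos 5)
Your proof is correct and takes essentially the same route as the paper: both arguments polarize the Fujiki relation $q_X(\alpha,\alpha)^n = c_X\int_X\alpha^{2n}$ — the paper by expanding $\alpha_t=\sum t_i\alpha_i$ and comparing coefficients of $t_1\cdots t_{2n}$, you via the abstract polarization identity for symmetric multilinear forms, which is the same computation in different packaging. Your constant $c=\tfrac{1}{(2n)!\,c_X}$ matches what the coefficient comparison yields, so there is nothing to fix.
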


\begin{proof}
For $\alpha_1,\ldots,\alpha_{2n} \in H^2(X,\R)$ and $t_1,\dots,t_{2n}\in\R$, set 
$$\alpha_t:=t_1 \alpha_1 + \ldots + t_{2n}\alpha_{2n}.$$
Then by the Fujiki relation there exists $c_X\in\Q_{>0}$ such that
$$q_X(\alpha_t,\alpha_t)^n = c_X \int_X \alpha_t^{2n}.$$
The lemma follows by comparing coefficients with $t_1\cdot\ldots\cdot t_{2n}$ on both sides of this equation.
\end{proof}

The following corollary is also a consequence of \cite[Corollary~2.15]{Ver09}. We provide the details for the reader's convenience.

\begin{corollary}\label{cor:wedge}
Let $X$ be a hyperkähler manifold of dimension $2n$ and let~$q_X$ be the BBF form on $X$. Let $\sigma\in H^2(X,\R)$ be the class of a symplectic $2$-form on $X$ and let $\beta\in H^{1,1}(X,\R)$ be a class with $q_X(\beta,\beta)=0$. Then for any $2 \leq p\leq n$ and any classes $\omega_1,\dots,\omega_{p-2}\in H^{1,1}(X,\R)$ on $X$ we have
$$\beta^p \cdot \sigma^{n-p+1} \cdot \overline{\sigma}^{n-p+1}\cdot \omega_1\cdot\ldots\cdot\omega_{p-2}=0\in H^{2n,2n}(X,\R).$$
\end{corollary}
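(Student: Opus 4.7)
The plan is to reduce to an intersection number and expand it using Lemma \ref{lem:Fujikirelation}, then show that every term in the resulting sum over perfect matchings vanishes. The only substantive ingredients will be the Hodge-theoretic orthogonality properties of $q_X$ and an elementary pigeonhole argument forcing each nonzero term of the expansion to contain a factor $q_X(\beta,\beta)$.

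First I would observe that since $X$ is a compact Kähler manifold of complex dimension $2n$, the space $H^{2n,2n}(X,\R)$ is one-dimensional, so the statement is equivalent to the vanishing of the intersection number
$$I := \beta^p \cdot \sigma^{n-p+1} \cdot \overline{\sigma}^{n-p+1} \cdot \omega_1 \cdot \ldots \cdot \omega_{p-2} \in \R.$$
Applying Lemma \ref{lem:Fujikirelation} to the ordered tuple $(\alpha_1, \ldots, \alpha_{2n})$ consisting of $p$ copies of $\beta$, followed by $(n-p+1)$ copies of $\sigma$, then $(n-p+1)$ copies of $\overline{\sigma}$, and finally $\omega_1, \ldots, \omega_{p-2}$, one rewrites $I$ as a finite sum, indexed by permutations $\rho \in S_{2n}$, of products of $n$ BBF pairings $q_X(\alpha_{\rho(2i-1)}, \alpha_{\rho(2i)})$. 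It suffices to show that every such term vanishes.

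The next step is to record the Hodge-theoretic vanishings of $q_X$: the BBF form makes $H^{1,1}(X,\C)$ orthogonal to $H^{2,0}(X)\oplus H^{0,2}(X)$, and each of $H^{2,0}(X)$ and $H^{0,2}(X)$ is isotropic. Consequently,
$$q_X(\sigma,\sigma) = q_X(\overline{\sigma},\overline{\sigma}) = 0, \quad q_X(\sigma,\beta) = q_X(\overline{\sigma},\beta) = q_X(\sigma,\omega_i) = q_X(\overline{\sigma},\omega_i) = 0,$$
and the hypothesis supplies $q_X(\beta,\beta)=0$. Hence in any term of the expansion that does not vanish identically, each copy of $\sigma$ must be paired with a copy of $\overline{\sigma}$, which consumes all $n-p+1$ copies of each.

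The remaining $2p-2$ positions are occupied by $p$ copies of $\beta$ together with the $p-2$ classes $\omega_1,\dots,\omega_{p-2}$, and must be matched among themselves. Since at most $p-2$ of the $\beta$-copies can be paired with an $\omega_j$, at least two copies of $\beta$ are forced to be paired together, contributing a factor $q_X(\beta,\beta)=0$. Therefore every term of the sum vanishes, and $I=0$. I do not anticipate a genuine obstacle here: the only ingredient beyond Lemma \ref{lem:Fujikirelation} is the standard orthogonality of $H^{1,1}$ with $H^{2,0}\oplus H^{0,2}$ under $q_X$, and the rest is combinatorial bookkeeping.
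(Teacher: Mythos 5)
Your proposal is correct and follows essentially the same route as the paper: expand the intersection number via Lemma \ref{lem:Fujikirelation} and show each permutation term contains a vanishing BBF pairing, using the Hodge-theoretic orthogonality of $q_X$ together with the pigeonhole count that the $p$ copies of $\beta$ cannot all be matched with the $p-2$ classes $\omega_i$. The only cosmetic difference is that the paper restricts attention to pairs containing $\beta$ (so it needs only $q_X(\beta,\beta)=q_X(\beta,\sigma)=q_X(\beta,\overline\sigma)=0$), whereas you first force the $\sigma$--$\overline\sigma$ matching using the extra (true) vanishings $q_X(\sigma,\sigma)=q_X(\sigma,\omega_i)=0$; both arguments are sound.
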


\begin{proof}
Set
$$\alpha_i:=\begin{cases}\beta& \text{for }i=1,\dots,p,\\
\sigma& \text{for }i=p+1,\dots,n+1,\\
\overline{\sigma}&\text{for }i=n+2,\dots,2n-p+2,\\
\omega_{i+p-2n-2}& \text{otherwise}.
\end{cases}$$
We have to show that $\alpha_1\cdot\ldots\cdot\alpha_{2n}=0$. To that end, fix a permutation $\rho\in S_{2n}$. We claim that there exist a pair $(2k-1,2k)$ with $1\leq k\leq n$, such that
\begin{equation}\label{eq:767}
q_X(\alpha_{\rho(2k-1)},\alpha_{\rho(2k)})=0.
\end{equation}
The claim implies the result immediately by Lemma \ref{lem:Fujikirelation}. To prove the claim, consider any $k$ such that $\beta\in\{\alpha_{\rho(2k-1)},\alpha_{\rho(2k)}\}$. If \mbox{$\{\alpha_{\rho(2k-1)},\alpha_{\rho(2k)}\}=\{\beta\}$}, then the equation \eqref{eq:767} holds by assumption. If $\{\alpha_{\rho(2k-1)},\alpha_{\rho(2k)}\}=\{\beta,\sigma\}$ or if $\{\alpha_{\rho(2k-1)},\alpha_{\rho(2k)}\}=\{\beta,\overline{\sigma}\}$, then \eqref{eq:767} holds for degree reasons. Otherwise, for each such $k$ there exists $1\leq i\leq p-2$ with $\{\alpha_{\rho(2k-1)},\alpha_{\rho(2k)}\}=\{\beta,\omega_i\}$, but this is impossible since $\beta$ occurs $p$ times in the cup product in the statement of the result.
\end{proof}

\subsection{Symplectic linear algebra}

We collect several well-known concepts from symplectic linear algebra. As a general reference for this subsection, we refer to \cite[Chapter~I, Part~1]{LM87}.

\begin{definition}
Let $V$ be a finite-dimensional complex vector space and let $\sigma \in \bigwedge^2 V^*$ be an alternating bilinear form on $V$. Consider the map
$$ \lambda\colon V \to V^*, \quad v \mapsto \sigma(v, \cdot). $$
The \emph{rank} of $\sigma$ is 
$$ \rk(\sigma) := \dim V - \dim \ker(\lambda). $$
The form $\sigma$ is \emph{symplectic} if $\rk(\sigma)=\dim V$. If~$\sigma$ is symplectic, we call $(V,\sigma)$ a \emph{symplectic vector space} and denote it also by~$V$.
\end{definition}

The rank of a $2$-form is of even dimension by the following proposition, see \cite[Theorem I.2.3]{LM87}.

\begin{proposition}\label{prop:rank}
Let $V$ be a finite-dimensional complex vector space and let $\sigma \in \bigwedge^2 V^*$. Then $\rk(\sigma)$ is even. More precisely, there exist linearly independent vectors $v^1, v^2, \dots, v^{2k}\in V^*$ such that
$$ \sigma = v^1 \wedge v^2 + v^3 \wedge v^4 + \dots + v^{2k-1} \wedge v^{2k}, $$
in which case $\operatorname{rk}(\sigma) = 2k$.
\end{proposition}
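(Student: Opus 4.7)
The plan is to proceed by induction on $n := \dim V$, with the case $\sigma = 0$ handled trivially by taking $k = 0$. If $\sigma \neq 0$, I will select vectors $v_1, v_2 \in V$ with $\sigma(v_1, v_2) \neq 0$ and rescale so that $\sigma(v_1, v_2) = 1$. Setting $U := \mathrm{span}(v_1, v_2)$ and $W := \{w \in V : \sigma(v_1, w) = \sigma(v_2, w) = 0\}$, the first key step will be to verify that $V = U \oplus W$, which follows from the explicit decomposition
$$ v = \sigma(v_1, v)\, v_2 - \sigma(v_2, v)\, v_1 + w $$
for every $v \in V$, where the remainder $w$ lies in $W$ by a direct computation using $\sigma(v_1, v_1) = \sigma(v_2, v_2) = 0$. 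In particular, $\dim W = n - 2$.

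Next I will apply the inductive hypothesis to the alternating form $\sigma|_W$ on $W$ to obtain linearly independent $v^3, \ldots, v^{2k} \in W^*$ with $\sigma|_W = v^3 \wedge v^4 + \ldots + v^{2k-1} \wedge v^{2k}$. Extending each such $v^j$ to an element of $V^*$ by setting $v^j|_U = 0$, and defining $v^1, v^2 \in V^*$ as the dual basis to $\{v_1, v_2\}$ on $U$ extended by zero on $W$, I will check that
$$ \sigma = v^1 \wedge v^2 + v^3 \wedge v^4 + \ldots + v^{2k-1} \wedge v^{2k} $$
by evaluating both sides on pairs drawn from a basis of $V$ adapted to the splitting $U \oplus W$. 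On pairs inside $U$ both sides give $1$; on pairs inside $W$ the $v^1 \wedge v^2$ term vanishes and the remaining terms agree by induction; on mixed pairs both sides vanish because $W$ is the $\sigma$-orthogonal complement of $U$ and each $v^j$ for $j \geq 3$ kills $U$. Linear independence of $\{v^1, \ldots, v^{2k}\}$ is immediate from the block structure, since the first two live in the annihilator of $W$ while the remaining ones live in the annihilator of $U$.

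Finally, I will compute the rank. The associated map $\lambda : V \to V^*$, $v \mapsto \sigma(v, \cdot)$, has image contained in $\mathrm{span}(v^1, \ldots, v^{2k})$, and is surjective onto it since $\lambda(v_2) = v^1$, $\lambda(-v_1) = v^2$, and by induction $\lambda|_W$ surjects onto $\mathrm{span}(v^3, \ldots, v^{2k})$. Consequently $\rk(\sigma) = \dim V - \dim \ker \lambda = 2k$, so the rank is automatically even.

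This argument contains no genuine obstacle; it is the standard normal form theorem for alternating bilinear forms. The only point requiring care is the verification of the direct sum decomposition $V = U \oplus W$ and the evaluation on mixed pairs, both of which follow from the defining property of $W$.
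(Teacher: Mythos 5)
Your proof is correct: it is the standard normal-form argument for alternating $2$-forms, by induction on $\dim V$ via the splitting $V = U \oplus W$ with $U = \mathrm{span}(v_1,v_2)$ and $W$ the $\sigma$-orthogonal complement of $U$. The paper itself gives no proof of this proposition; it simply cites \cite[Theorem I.2.3]{LM87}, so what you have done is supply the self-contained elementary argument behind that reference, and every step you outline (the explicit decomposition of $v$, the evaluation on pairs adapted to $U \oplus W$, the block-structure argument for linear independence, and the identification of the image of $\lambda$) goes through. One cosmetic slip: with the usual convention $(\alpha \wedge \beta)(x,y) = \alpha(x)\beta(y) - \alpha(y)\beta(x)$ one gets $\lambda(v_2) = -v^1$ and $\lambda(v_1) = v^2$, not $\lambda(v_2) = v^1$ and $\lambda(-v_1) = v^2$ as you wrote; the signs are harmless, since the image of $\lambda$ still contains $\pm v^1, \pm v^2$ and hence equals $\mathrm{span}(v^1,\dots,v^{2k})$, giving $\rk(\sigma) = 2k$ as claimed.
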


Let $(V,\sigma)$ be a symplectic vector space of dimension $2n$ and let $W$ be a subspace of $V$. Then the set
$$ W^\perp := \{ v \in V \mid \sigma(v, w) = 0 \text{ for all } w \in W \} $$
is the \emph{$\sigma$-orthogonal complement} of $W$. We have
$$\dim W+\dim W^\perp=\dim V.$$

\begin{definition}
Let $(V,\sigma)$ be a symplectic vector space of dimension~$2n$. A subspace $W \subseteq V$ of a symplectic vector space is
\begin{enumerate}[\normalfont (a)]
\item \emph{coisotropic} if $ W^\perp \subseteq W$,
\item \emph{Lagrangian} if $ W^\perp = W$.
\end{enumerate}
\end{definition}

\begin{remark}\label{rem:coisotropic}
With notation from the previous definition, assume that $W$ is coisotropic. Then $\dim W\geq n$, and it is Lagrangian if and only if $\dim W = n$.
\end{remark}

\begin{theorem} \label{theorem:coisotropic-characterisation}
Let $(V,\sigma)$ be a symplectic vector space of dimension $2n$ and let $W$ be a  subspace of $V$ of dimension $p$. Let $\sigma_W:=\sigma|_{W\times W}$ be the form induced by $\sigma$ on $W$. Then the following conditions are equivalent:
\begin{enumerate}[\normalfont (a)]
    \item $W$ is coisotropic,
    \item $\rk(\sigma_W) = 2(p-n)$,
	\item $\sigma_W^{p-n+1} = 0$.
\end{enumerate} 
\end{theorem}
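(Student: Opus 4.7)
The plan is to reduce both (b) and (c) to statements about $\dim(W \cap W^\perp)$, which directly links them to (a).

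The key observation is a kernel computation. Viewing $\sigma_W$ as the linear map
$$W \to W^*, \quad w \mapsto \sigma(w,\cdot)|_W,$$
its kernel is precisely $W \cap W^\perp$. Hence
$$\rk(\sigma_W) = p - \dim(W \cap W^\perp).$$
Since $W \cap W^\perp \subseteq W^\perp$ and $\dim W^\perp = 2n-p$, this already yields the lower bound
$$\rk(\sigma_W) \geq p - (2n-p) = 2(p-n).$$

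For the equivalence (a) $\Leftrightarrow$ (b), I would argue as follows. The inclusion $W^\perp \subseteq W$ is equivalent to $W \cap W^\perp = W^\perp$, which by a dimension count is equivalent to $\dim(W \cap W^\perp) = 2n - p$. Substituting in the kernel formula, this translates precisely to $\rk(\sigma_W) = 2(p-n)$. Note that either condition forces $p \geq n$, consistent with Remark~\ref{rem:coisotropic}.

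For the equivalence (b) $\Leftrightarrow$ (c), I would apply Proposition~\ref{prop:rank} to $\sigma_W$. If $\rk(\sigma_W)=2k$, then there exist linearly independent $v^1,\ldots,v^{2k}\in W^*$ with
$$\sigma_W = v^1\wedge v^2 + v^3\wedge v^4 + \ldots + v^{2k-1}\wedge v^{2k}.$$
A direct expansion gives $\sigma_W^k = k!\, v^1\wedge v^2\wedge\ldots\wedge v^{2k}\neq 0$, while $\sigma_W^{k+1}=0$ for dimension reasons (only $2k$ independent covectors appear). Consequently $\sigma_W^{p-n+1}=0$ holds if and only if $p-n+1 \geq k+1$, that is, $\rk(\sigma_W)\leq 2(p-n)$. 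Combined with the general lower bound $\rk(\sigma_W)\geq 2(p-n)$ from the kernel computation, condition (c) is equivalent to the equality $\rk(\sigma_W)=2(p-n)$, which is (b).

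The only nontrivial ingredient is the kernel identification together with the bound $\dim(W \cap W^\perp)\leq\dim W^\perp$; without this lower bound on $\rk(\sigma_W)$, condition (c) would only give one direction of the rank equality. Everything else is routine linear algebra combined with the normal form from Proposition~\ref{prop:rank}.
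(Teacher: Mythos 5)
Your proof is correct and follows essentially the same route as the paper: the equivalence (a)$\Leftrightarrow$(b) via the dimension of $W\cap W^\perp$, and (b)$\Leftrightarrow$(c) via the normal form of Proposition~\ref{prop:rank} together with the lower bound $\rk(\sigma_W)\geq 2(p-n)$. The only difference is that the paper cites \cite[Proposition I.6.5 and Corollary I.6.3]{LM87} for the first equivalence and the rank bound, whereas you prove these directly through the kernel identification $\ker(\sigma_W)=W\cap W^\perp$, which is exactly the content of those references.
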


\begin{proof}
The equivalence of (a) and (b) follows from \cite[Proposition I.6.5]{LM87}. Part (b) implies (c) by Proposition \ref{prop:rank}. Now assume (c). In particular, we have $p\geq n$, and hence $\rk(\sigma_W) \geq 2(p-n)$ by \cite[Corollary I.6.3]{LM87}. On the other hand, Proposition \ref{prop:rank} implies that $\rk(\sigma_W) \leq 2(p-n)$. This shows~(b).
\end{proof}

Now, let $X$ be a hyperkähler manifold with a holomorphic symplectic form~$\sigma$. A subvariety $Z \subseteq X$ is called \emph{coisotropic} if at all smooth points $z \in Z$ the subspace $T_{Z,z} \subseteq T_{X,z}$ is coisotropic with respect to $\sigma$.

\section{Good bilinear forms}\label{sec:good}

In order to uniformise approaches to nonvanishing problems in different contexts, we introduce bilinear forms with special properties.

\begin{definition}\label{dfn:goodbilinear}
Let $X$ be a compact complex manifold. A symmetric bilinear form 
$$Q\colon H^{1,1}_{\mathrm{BC}}(X,\R)\times H^{1,1}_{\mathrm{BC}}(X,\R)\to\R$$
is \emph{good} if the following two conditions are satisfied.
\begin{enumerate}[\normalfont (a)]
\item The form $Q$ has signature $\big(1,h^{1,1}(X)-1\big)$.
\item If $\alpha,\beta\in H^{1,1}_{\mathrm{BC}}(X,\R)$ are classes such that $\alpha$ is modified nef and $\beta$ is pseudoeffective, then $Q(\alpha,\beta)\geq0$.
\end{enumerate}
\end{definition}

The next proposition provides two main examples of good bilinear forms which are relevant for this paper.

\begin{proposition}\label{prop:twoexamples}
Let $X$ be a compact Kähler manifold of dimension $n$.
\begin{enumerate}[\normalfont (a)]
\item Let $\eta$ be a Kähler class on $X$ and consider the symmetric bilinear form 
$$Q_\eta\colon H^{1,1}(X,\R)\times H^{1,1}(X,\R)\to\R,\quad (\alpha,\beta)\mapsto \alpha\cdot\beta\cdot\eta^{n-2}.$$
Then $Q_\eta$ is a good bilinear form.
\item Assume that $X$ is a hyperkähler manifold and let $q_X$ be the BBF form on $X$. Then the restriction of $q_X$ to $H^{1,1}(X,\R)$ is a good bilinear form.
\end{enumerate}
\end{proposition}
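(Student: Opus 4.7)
My plan is to prove (a) and (b) in parallel, separating the signature conditions from the non-negativity conditions. For (a)(i) I would invoke the Hodge--Riemann bilinear relations: the Lefschetz decomposition $H^{1,1}(X,\R) = \R\omega \oplus H^{1,1}_{\mathrm{prim}}(X,\R)$ is $Q_\omega$-orthogonal, $Q_\omega(\omega,\omega) = \int_X \omega^n > 0$, and $Q_\omega$ is negative definite on the primitive part. Part (b)(i) is classical for hyperkähler manifolds: with a Kähler class $\omega$ one has $q_X(\omega,\omega)>0$, the real plane spanned by $\mathrm{Re}\,\sigma$ and $\mathrm{Im}\,\sigma$ in $H^{2,0}\oplus H^{0,2}$ is $q_X$-positive definite thanks to $q_X(\sigma,\bar\sigma)>0$, and combined with the global signature $(3, b_2(X)-3)$ of $q_X$ on $H^2(X,\R)$ this forces the asserted signature $(1, h^{1,1}(X)-1)$ on $H^{1,1}(X,\R)$.

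For (a)(ii), since the modified nef cone is the closure of the modified Kähler cone and $Q_\omega$ is continuous, I would reduce to the case where $\alpha$ is modified Kähler. As recalled in \S\ref{subsec:modifiednef}, such a class admits a closed positive current $S\in\alpha$ whose unbounded locus is contained in an analytic set of codimension at least $2$. Picking any closed positive $(1,1)$-current $T\in\beta$, Lemma~\ref{lem:strongly2} with $p=1$ gives that $S\wedge T$ is a well-defined strongly positive $(2,2)$-current, and then
\[
Q_\omega(\alpha,\beta) = \int_X S\wedge T\wedge \omega^{n-2}\geq 0,
\]
since the integrand pairs a strongly positive current with the positive $(n-2,n-2)$-form $\omega^{n-2}$.

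For (b)(ii) I would transfer the statement to an intersection number controlled by the symplectic form. A first input is the Hodge orthogonality $q_X(\gamma,\sigma) = q_X(\gamma,\bar\sigma)=0$ for every $\gamma\in H^{1,1}(X,\R)$, which I would derive from Lemma~\ref{lem:Fujikirelation} applied to bidegree-zero products such as $\sigma^{2}\cdot\omega^{2n-2}=0$, combined with $q_X(\sigma,\sigma)=0$. Expanding $\alpha\cdot\beta\cdot\sigma^{n-1}\cdot\bar\sigma^{n-1}$ via Lemma~\ref{lem:Fujikirelation}, the only non-vanishing pairings are those that match $\alpha$ with $\beta$ and each $\sigma$ with a $\bar\sigma$, whence
\[
\alpha\cdot\beta\cdot\sigma^{n-1}\cdot\bar\sigma^{n-1} = C\cdot q_X(\alpha,\beta)\cdot q_X(\sigma,\bar\sigma)^{n-1}
\]
for an explicit combinatorial constant $C>0$. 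Since $q_X(\sigma,\bar\sigma)>0$, it suffices to show the left-hand side is $\geq 0$. Reducing once more to $\alpha$ modified Kähler and using currents $S\in\alpha$, $T\in\beta$ as in (a)(ii), Lemma~\ref{lem:strongly2} makes $S\wedge T$ into a strongly positive $(2,2)$-current, and the required intersection equals $\int_X S\wedge T\wedge \sigma^{n-1}\wedge \bar\sigma^{n-1}$.

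The main obstacle is then the positivity of the smooth $(2n-2,2n-2)$-form $\sigma^{n-1}\wedge\bar\sigma^{n-1}$ on $X$, which is what guarantees that this last integral is non-negative. I would establish it via the $SU(2)$ triple $(I,J,K)$ of complex structures underlying the hyperkähler metric: in the $I$-complex structure $\sigma = \omega_J + i\omega_K$, so $\sigma\wedge\bar\sigma = \omega_J^2 + \omega_K^2$, and a standard twistor-theoretic analysis of $SU(2)$-invariant forms on hyperkähler manifolds yields the positivity of this form and of its powers.
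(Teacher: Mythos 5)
Your treatment of (a) coincides with the paper's proof: the signature via the Hodge--Riemann relations and the Lefschetz decomposition, and the inequality $Q_\omega(\alpha,\beta)\geq 0$ by reducing to a modified Kähler class, taking a closed positive current in $\alpha$ whose unbounded locus sits in an analytic set of codimension at least $2$, forming the strongly positive $(2,2)$-current via Lemma~\ref{lem:strongly2} and pairing with $\omega^{n-2}$. For (b) you take a genuinely different, more self-contained route: the paper simply quotes \cite[Corollary~23.11]{GHJ03} for the signature and \cite[Proposition~4.2]{Bou04} for the statement that $q_X(\alpha,\beta)\geq0$ when $\alpha$ is modified nef and $\beta$ is pseudoeffective, whereas you re-derive the signature from the global signature $(3,b_2(X)-3)$ and re-prove the positivity by combining the polarised Fujiki relation (Lemma~\ref{lem:Fujikirelation}) with the Hodge orthogonality $q_X(\gamma,\sigma)=q_X(\gamma,\overline\sigma)=0$ to get $\alpha\cdot\beta\cdot\sigma^{n-1}\cdot\overline\sigma^{n-1}=C\,q_X(\alpha,\beta)\,q_X(\sigma,\overline\sigma)^{n-1}$ with $C>0$, and then pairing the strongly positive current $S\wedge T$ against $\sigma^{n-1}\wedge\overline\sigma^{n-1}$; this is in substance Boucksom's own argument written out, so your version buys self-containedness at the cost of length (note that to get the orthogonality you should first extract $q_X(\sigma,\omega)=0$ from $\sigma\cdot\omega^{2n-1}=0$ and then $q_X(\gamma,\sigma)=0$ from $\gamma\cdot\sigma\cdot\omega^{2n-2}=0$, which works). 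The one place where your justification is both vague and unnecessarily heavy is the positivity of $\sigma^{n-1}\wedge\overline\sigma^{n-1}$: no hyperkähler metric, $SU(2)$ triple or twistor theory is needed, since $\sigma^{n-1}$ is a $(2n-2,0)$-form and hence $i^{(2n-2)^2}\sigma^{n-1}\wedge\overline\sigma^{n-1}=\sigma^{n-1}\wedge\overline\sigma^{n-1}$ is a positive form by \cite[Example~III.1.2]{Dem12a} --- exactly the fact the paper itself invokes in Section~\ref{sec:Demailly} --- and a strongly positive $(2,2)$-current pairs non-negatively with a merely positive $(2n-2,2n-2)$-form by definition, so your integral inequality closes without the detour.
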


\begin{proof}
It is a well-known consequence of the Hodge--Riemann bilinear relations that $Q_\eta$ has signature $\left(1,h^{1,1}(X)-1\right)$ on $H^{1,1}(X,\R)$, and we recall the proof here for convenience. The proof of the analogous statement for $q_X$ is in \cite[Corollary 23.11]{GHJ03}. Denote by $H^{1,1}(X,\R)_{\mathrm{prim}}$ the primitive part of $H^{1,1}(X,\R)$ and let $L_\eta$ denote the Lefschetz operator associated to $\eta$. Then, by the Hard Lefschetz theorem, we have the decomposition
$$H^{1,1}(X,\R)=L_\eta H^0(X,\R)\oplus H^{1,1}(X,\R)_{\mathrm{prim}}.$$
The form $Q_\eta$ is clearly positive definite on $L_\eta H^0(X,\R)\simeq \R\eta$, and it is negative definite on $H^{1,1}(X,\R)_{\mathrm{prim}}$ by \cite[Theorem 6.32]{Voi02}.

Now, let $\alpha,\beta\in H^{1,1}(X,\R)$ be classes such that $\alpha$ is modified nef and $\beta$ is pseudoeffective. We need to show that $Q_\eta(\alpha,\beta)\geq0$. The proof is similar to that of \cite[Proposition 4.2]{Bou04}, where the analogous statement was shown for $q_X$. Since the modified nef cone is the closure of the modified Kähler cone, by continuity it suffices to prove the claim when $\alpha$ is modified Kähler. By \S\ref{subsec:modifiednef} there exists a closed positive current $T\in\alpha$ with analytic singularities along an analytic subvariety of codimension at least $2$. If $S\in\beta$ is any closed positive current, then $T\wedge S\in\alpha\cdot\beta$ exists and it is a closed strongly positive $(2,2)$-current by Lemma~\ref{lem:strongly2}. Let $\omega\in\eta$ be a Kähler form. Since the form $\omega^{n-2}$ is strongly positive by \cite[Proposition III.1.11]{Dem12a}, we have
$$Q_\eta(\alpha,\beta)=\int_X T\wedge S\wedge \omega^{n-2}\geq0,$$
as desired.
\end{proof}

The first properties of good bilinear forms are summarised in the following result.

\begin{proposition}\label{HITgood}
Let $X$ be a compact complex manifold with a good bilinear form $Q\colon H^{1,1}_{\mathrm{BC}}(X,\R)\times H^{1,1}_{\mathrm{BC}}(X,\R)\to\R$. Then the following statements hold.
\begin{enumerate}[\normalfont (a)]
\item If $Q(\alpha,\alpha)>0$ and $Q(\alpha,\beta)=0$ for some $\alpha,\beta\in H^{1,1}_{\mathrm{BC}}(X,\R)$, then $Q(\beta,\beta)\leq0$, with equality if and only if $\beta=0$.
\item Let $\alpha,\beta\in H^{1,1}_{\mathrm{BC}}(X,\R)$ be classes with
$$Q(\alpha,\alpha)=Q(\beta,\beta)=Q(\alpha,\beta)=0.$$ 
Then $\alpha$ and $\beta$ are proportional.
\end{enumerate}
\end{proposition}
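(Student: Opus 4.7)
My plan is to reduce Proposition \ref{HITgood} to standard linear algebra for a symmetric bilinear form of signature $(1, h^{1,1}(X)-1)$ on the finite-dimensional space $V := H^{1,1}_{\mathrm{BC}}(X,\R)$. I expect to use only condition (a) of Definition \ref{dfn:goodbilinear}; the modified-nef-versus-pseudoeffective positivity property (b) should play no role here, which is worth noting explicitly since it is the non-trivial content of the definition.

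For part (a), I would exploit the non-degeneracy given by $Q(\alpha,\alpha) > 0$: the line $\R\alpha$ is a non-degenerate subspace, so one has the orthogonal decomposition $V = \R\alpha \oplus \alpha^\perp$. Since $Q$ restricted to $\R\alpha$ has signature $(1,0)$, Sylvester's law of inertia gives that the restriction of $Q$ to $\alpha^\perp$ has signature $(0, h^{1,1}(X)-1)$, and non-degeneracy of $Q$ on $V$ makes this restriction negative definite. The hypothesis $Q(\alpha,\beta)=0$ places $\beta$ in $\alpha^\perp$, yielding $Q(\beta,\beta) \leq 0$, with equality if and only if $\beta = 0$.

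For part (b), I would dispose first of the trivial case where $\alpha$ or $\beta$ vanishes, and then assume both are nonzero. The key observation to set up is that any class $\omega \in V$ with $Q(\omega,\omega) > 0$ (which exists by the signature hypothesis) must pair nontrivially with every nonzero isotropic class: indeed, if $Q(\alpha,\omega)=0$ and $Q(\alpha,\alpha)=0$, then part (a) applied to the pair $(\omega,\alpha)$ forces $\alpha = 0$. Hence $Q(\alpha,\omega) \neq 0$ and $Q(\beta,\omega) \neq 0$. Setting
$$c := \frac{Q(\beta,\omega)}{Q(\alpha,\omega)} \quad \text{and} \quad \gamma := \beta - c\,\alpha,$$
a short bilinear expansion using the three vanishing pairings among $\alpha$ and $\beta$ gives $Q(\gamma,\gamma)=0$, while by construction $Q(\gamma,\omega)=0$. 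Applying the observation a second time, now to $\gamma$ in place of $\alpha$, yields $\gamma = 0$, i.e.\ $\beta = c\alpha$.

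I do not anticipate any substantial obstacle: the proposition is a well-known fact about symmetric bilinear forms of hyperbolic (Lorentzian) signature, a viewpoint that also explains why the two parts are naturally linked (part (a) gives negative-definiteness on $\alpha^\perp$, while part (b) is the statement that the light cone contains no two-dimensional linear subspaces). The only care required is in handling the degenerate edge cases (zero classes or $h^{1,1}(X)=1$) and in keeping track of the fact that only the signature hypothesis from Definition \ref{dfn:goodbilinear} is being invoked.
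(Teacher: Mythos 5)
Your proof is correct and follows essentially the same route as the paper: part (a) is the standard Sylvester/signature argument (which the paper dismisses as immediate from the definition), and your part (b) — pairing against a class $\omega$ with $Q(\omega,\omega)>0$, forming the isotropic class $\gamma=\beta-c\alpha$ orthogonal to $\omega$, and applying (a) to conclude $\gamma=0$ — is exactly the paper's argument up to a rescaling of $\gamma$. Your explicit remark that only the signature condition of Definition \ref{dfn:goodbilinear} is used is accurate and consistent with the paper.
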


\begin{proof}
Part (a) follows immediately from the definition of good bilinear forms.

Now we show (b). We may assume that $\alpha\neq0$ and $\beta\neq0$, since otherwise the statement is trivial. Fix a class $\omega\in H^{1,1}_{\mathrm{BC}}(X,\R)$ with $Q(\omega,\omega)>0$, and set $a:=Q(\alpha,\omega)$ and $b:=Q(\beta,\omega)$. Then $a\neq0$ and $b\neq0$ by (a), and set $\gamma:=a\beta-b\alpha$. Then
$$Q(\gamma,\gamma)=a^2Q(\beta,\beta)-2abQ(\alpha,\beta)+b^2Q(\alpha,\alpha)=0$$
and
$$Q(\gamma,\omega)=aQ(\beta,\omega)-bQ(\alpha,\omega)=0,$$
hence $\gamma=0$ again by (a).
\end{proof}

The following is the main result of this section.

\begin{theorem} \label{thm:nu1a}
Let $X$ be a compact complex manifold with a good bilinear form $Q\colon H^{1,1}_{\mathrm{BC}}(X,\R)\times H^{1,1}_{\mathrm{BC}}(X,\R)\to\R$, and let $\alpha\in H^{1,1}_{\mathrm{BC}}(X,\R)$ be a modified nef class with $Q(\alpha,\alpha)=0$. Assume that there exist a pseudoeffective class $\beta\in H^{1,1}_{\mathrm{BC}}(X,\R)$ and a non-zero $\R$-divisor $D \geq 0$ on $X$ such that
\begin{equation}\label{eq:thesum}
\{D\}+\beta = \alpha.
\end{equation}
Then the following statements hold.
\begin{enumerate}[\normalfont (a)]
\item There exists an $\R$-divisor $E\geq0$ on $X$ such that
$$\alpha=\{E\}\quad\text{and}\quad \kappa(X,E)\geq\kappa(X,D).$$
If additionally $\alpha\in H^{1,1}_{\mathrm{BC}}(X,\Q)$, then we may choose $E$ to be a $\Q$-divisor.
\item If $R(\beta)\neq0$,\footnote{Recall the Siu decomposition of the class $\beta$ as in Definition \ref{dfn:Siuclasses}.} then there exists $t>0$ such that $R(\beta)=t\alpha$. In particular, $\alpha$ is Siu-residual.
\end{enumerate}
\end{theorem}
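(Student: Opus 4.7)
The plan is to decompose $\beta$ into its Siu parts $\beta = R(\beta) + \{D_\beta\}$, with $D_\beta \geq 0$ an $\R$-divisor, and rewrite the hypothesis as
\[\alpha = \{D + D_\beta\} + R(\beta).\]
Since $\alpha$ is modified nef and both $\{D+D_\beta\}$ and $R(\beta)$ are pseudoeffective, the identity $Q(\alpha,\alpha) = Q(\alpha,\{D+D_\beta\}) + Q(\alpha,R(\beta)) = 0$ together with the good-form inequality forces both summands to vanish. Next, since $R(\beta)$ is Siu-residual, it is itself modified nef by Remark \ref{rem:rigid1}, so $Q(R(\beta),R(\beta)) \geq 0$. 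Expanding $0 = Q(\alpha,R(\beta))$ again and using that $Q(\{D+D_\beta\},R(\beta))\geq 0$ then yields $Q(R(\beta),R(\beta)) = 0$.

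For part (b), assuming $R(\beta)\neq 0$, I would apply Proposition \ref{HITgood}(b) to $(\alpha, R(\beta))$, now that $Q(\alpha,\alpha) = Q(R(\beta),R(\beta)) = Q(\alpha,R(\beta)) = 0$, obtaining $R(\beta) = t\alpha$ for some $t\in\R$. Necessarily $\alpha\neq 0$ and $t\neq 0$; if $t<0$, then $-\alpha$ would be a positive multiple of the pseudoeffective class $R(\beta)$, placing the line $\R\alpha$ in the pseudoeffective cone in contradiction with Lemma \ref{lem:pseflines}. Thus $t>0$, and Remark \ref{rem:rigid1} transfers the Siu-residual property from $R(\beta)$ to $\alpha$. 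Substituting back gives $(1-t)\alpha = \{D+D_\beta\}$: the value $t=1$ would force $D+D_\beta = 0$ via Lemma \ref{lem:cohotozero}, contradicting $D\neq 0$, while $t>1$ is ruled out in the same way as $t<0$. Hence $0<t<1$.

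For part (a), set $E := \frac{1}{1-t}(D+D_\beta)$ if $R(\beta)\neq 0$, and $E := D+D_\beta$ otherwise. In both cases $E\geq 0$ is an $\R$-divisor with $\{E\} = \alpha$ and $E \geq D$, whence $\kappa(X,E) \geq \kappa(X,D)$. The rationality addendum then follows from Lemma \ref{lem:rationalclass}. The main obstacle is the two-step exploitation of $Q$: first using goodness against $\alpha$ to split the orthogonality onto the two pseudoeffective pieces, and then invoking it a second time with $R(\beta)$ itself playing the role of the modified nef class, to upgrade $Q(R(\beta),R(\beta)) \geq 0$ to an equality. It is this second step, which depends on the fact (Remark \ref{rem:rigid1}) that Siu-residual classes are modified nef, that unlocks Proposition \ref{HITgood}(b) and with it the whole argument.
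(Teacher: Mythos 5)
Your proof is correct and takes essentially the same route as the paper: Siu-decompose $\beta$, use goodness of $Q$ against the modified nef class $\alpha$ (and against the Siu-residual, hence modified nef, class $R(\beta)$) to get $Q(\alpha,R(\beta))=Q(R(\beta),R(\beta))=0$, apply Proposition~\ref{HITgood}(b) to obtain $R(\beta)=t\alpha$ with $0<t<1$, and solve $(1-t)\alpha=\{D+D_\beta\}$ for $E$; your derivation of $Q(R(\beta),R(\beta))=0$ by expanding $Q(\alpha,R(\beta))=0$ into two nonnegative terms is only a cosmetic variant of the paper's argument via Proposition~\ref{HITgood}(a), and your explicit exclusion of $t\leq 0$ and $t\geq 1$ fills in details the paper leaves implicit. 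One small correction: to rule out $t=1$ you should not invoke Lemma~\ref{lem:cohotozero}, which assumes $X$ Kähler whereas here $X$ is only a compact complex manifold; use instead \cite[Example~3.2]{LX24} (as in Remark~\ref{rem:rigid2} and Lemma~\ref{lem:pseflines}), which yields $D+D_\beta=0$ from $\{D+D_\beta\}=0$ on any compact complex manifold.
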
 

\begin{proof}
The second statement in (a) follows immediately from the first statement in (a) and from Lemma \ref{lem:rationalclass}. In the remainder of the proof, we prove (b) and the first statement in (a).

There exists an $\R$-divisor $N\geq0$ such that the divisorial part in the Siu decomposition of $\beta$ is equal to $\{N\}$. Let $\rho$ denote the residual part in the Siu decomposition of $\beta$. Then $\beta = \rho + \{N\} $, and hence
\begin{equation}\label{eq:14a}
\alpha = \rho + \{N+D\}.
\end{equation}
We may assume that $\rho \neq 0$, since otherwise the result is trivial. We have
$$0=Q(\alpha,\alpha)=Q(\alpha,\rho)+Q(\alpha,\{N+D\}),$$
hence both summands are zero by the condition (b) in Definition \ref{dfn:goodbilinear}. In particular,
\begin{equation}\label{eq:12aa}
Q(\alpha,\alpha)=Q(\alpha,\rho)=0.
\end{equation}
If $Q(\rho,\rho)>0$, then this together with \eqref{eq:12aa} and Theorem \ref{HITgood}(a) implies that $\alpha=0$, which contradicts \eqref{eq:thesum} and the fact that $D\neq0$. Therefore, $Q(\rho,\rho)\leq0$. On the other hand, we have $Q(\rho,\rho)\geq0$ by the condition (b) in Definition~\ref{dfn:goodbilinear} again, hence 
\begin{equation}\label{eq:12bb}
Q(\rho,\rho)=0.
\end{equation}
Then \eqref{eq:12aa}, \eqref{eq:12bb} and Theorem \ref{HITgood}(b) yield that there exists a real number $t > 0$ with $\rho=t\alpha$, which shows (b) by Remark \ref{rem:rigid1}.

In particular, \eqref{eq:14a} gives
$$(1-t)\alpha=\{D+N\}.$$
Note that $D \neq 0$ implies that $t < 1$. Therefore, setting 
$$E:=\frac{1}{1-t}(N+D),$$
we obtain $\alpha=\{E\}$ and $\kappa(X,E)\geq\kappa(X,D)$, as desired.
\end{proof}

\section{Nonvanishing results}

In this section, we prove Theorems \ref{thm:main1} and \ref{thm:main2}. We follow the strategy from \cite{LP18,LP20b,Laz24}, but the implementation in the setting of this paper is considerably more involved.

Theorem \ref{thm:nonvanishingFormsnu1} below is the main technical result of the section, which generalises \cite[Theorem 6.3]{LP18} and \cite[Theorem 7.1]{LP20b}.

\begin{theorem}\label{thm:nonvanishingFormsnu1}
Let $X$ be a $\Q$-factorial compact Kähler variety which is not uniruled, and let $\sM$ be a nef $\Q$-line bundle on $X$. Let $t$ be a positive integer such that $\sM^{\otimes t}$ is a line bundle, and assume that the $\Q$-line bundle $\sM\otimes\sO_X({-}K_X)$ is pseudoeffective. Let $\pi\colon Y\to X$ be a resolution of $X$ and let 
$$Q\colon H^{1,1}(Y,\R)\times H^{1,1}(Y,\R)\to\R$$
be a good bilinear form. Assume that $Q\big(\{\pi^*\sM\},\{\pi^*\sM\}\big)=0$ and that for some positive integer $p$ we have  
$$ H^0\left(Y,(\Omega^1_Y)^{\otimes p} \otimes \pi^*\sM^{\otimes tm}\right) \neq 0$$
for infinitely many integers $m$. Then there exists a $\Q$-divisor $D\geq0$ such that $\{\sM\}=\{D\}$.
\end{theorem}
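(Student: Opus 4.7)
\medskip

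\textbf{Proof plan.} The argument will follow the strategy of \cite[Theorem~6.3]{LP18} and \cite[Theorem~7.1]{LP20b}, with the good bilinear form $Q$ playing the role of the Hodge--Riemann-type form. First, since uniruledness is a bimeromorphic invariant, $Y$ is not uniruled, so Theorem~\ref{thm:Ou} gives that $\{K_Y\}$ is pseudoeffective. For every integer $m$ with $H^0\big(Y,(\Omega^1_Y)^{\otimes p}\otimes\pi^*\sM^{\otimes tm}\big)\neq 0$, a nonzero section yields a nonzero morphism $\pi^*\sM^{\otimes(-tm)}\to(\Omega^1_Y)^{\otimes p}$; its saturated image is a line subbundle of the form $\pi^*\sM^{\otimes(-tm)}\otimes\sO_Y(D_m)$ for a unique effective $\Q$-divisor $D_m\geq 0$. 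Set $n:=\dim Y$ and $c:=pn^{p-1}$. Applying Theorem~\ref{thm:quotientspsef} to the torsion-free rank $n^p-1$ quotient
\[
\sG_m:=(\Omega^1_Y)^{\otimes p}\big/\bigl(\pi^*\sM^{\otimes(-tm)}\otimes\sO_Y(D_m)\bigr),
\]
and computing $\det(\Omega^1_Y)^{\otimes p}=K_Y^{\otimes pn^{p-1}}$, one obtains that the class
\[
\{\det\sG_m\}=c\{K_Y\}+tm\{\pi^*\sM\}-\{D_m\}
\]
is pseudoeffective on $Y$ for infinitely many $m$.

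Next, I would combine this with the hypothesis that $\sM\otimes\sO_X(-K_X)$ is pseudoeffective. Writing $\pi^*K_X=K_Y-E$ with $E$ a $\pi$-exceptional $\Q$-divisor, pulling back and multiplying by $c$ gives that $c\{\pi^*\sM\}-c\{K_Y\}+c\{E\}$ is pseudoeffective, whence summing with $\{\det\sG_m\}$ yields that
\[
(c+tm)\{\pi^*\sM\}+c\{E\}-\{D_m\}
\]
is pseudoeffective for infinitely many $m$. Pairing this class against $\{\pi^*\sM\}$ with $Q$, using $Q(\{\pi^*\sM\},\{\pi^*\sM\})=0$ together with property (b) of good bilinear forms (applicable since $\{\pi^*\sM\}$ is nef, hence modified nef), produces a uniform bound $Q(\{\pi^*\sM\},\{D_m\})\le c\,Q(\{\pi^*\sM\},\{E\})$ which does not depend on $m$.

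At this point the plan is to rewrite the pseudoeffectivity statement as a decomposition to which Theorem~\ref{thm:nu1a} can be applied. After choosing the resolution $\pi$ carefully and taking $m$ sufficiently large, the exceptional contribution $c\{E\}$ should be absorbable into the effective divisor part, producing an identity of the form
\[
(c+tm)\{\pi^*\sM\}=\{D_m'\}+\beta_m,
\]
where $D_m'\geq0$ is a nonzero effective $\R$-divisor on $Y$ and $\beta_m$ is pseudoeffective. Since $(c+tm)\{\pi^*\sM\}$ is modified nef with vanishing $Q$-self-intersection and belongs to $H^{1,1}_{\mathrm{BC}}(Y,\Q)$, Theorem~\ref{thm:nu1a}(a) then supplies an effective $\Q$-divisor $\widetilde D$ on $Y$ with $\{\widetilde D\}=\{\pi^*\sM\}$. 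Pushing forward via $\pi$ (using $\pi_*\pi^*=\mathrm{id}$ on Bott--Chern cohomology for bimeromorphic morphisms) gives the desired effective $\Q$-divisor $D:=\pi_*\widetilde D$ on $X$ with $\{\sM\}=\{D\}$.

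\emph{Main obstacle.} The delicate step is the bookkeeping involving $E$: since $E$ need not be effective, the class $c\{E\}$ may contribute a non-pseudoeffective piece, and one must pass to a sufficiently large $m$ (exploiting that the saturated divisor $D_m$ contains an exceptional part that grows with $m$) in order to cast the identity in the form required by Theorem~\ref{thm:nu1a}. The uniform bound on $Q(\{\pi^*\sM\},\{D_m\})$ obtained above, together with Remark~\ref{rem:rigid2} controlling the rigid divisorial part of the Siu decomposition, should be the key inputs that make this absorption possible.
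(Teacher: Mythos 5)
Your construction loses the one piece of uniformity that the paper's proof is built on, and this creates a genuine gap. Because you saturate a section separately for each $m$, the torsion-free quotient $\sG_m$ and hence the pseudoeffective class $\{\det\sG_m\}$ depend on $m$. The paper instead proves a delicate lemma (Lemma~\ref{lemmafund}): from infinitely many sections of $\mathcal E\otimes\mathcal L^{\otimes m}$ one can extract a \emph{single} saturated line bundle $\mathcal G\subseteq\bigwedge^r\mathcal E$, independent of $m$, with $H^0(Y,\mathcal G\otimes\mathcal L^{\otimes m})\neq0$ for infinitely many $m$; via Proposition~\ref{pro:quotientKaehler} this yields one fixed pseudoeffective line bundle $\sF$ with $\sF\simeq\pi^*\sM^{\otimes tm}\otimes\sO_Y(\ell K_Y-N_m)$ for all these $m$. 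That $m$-independence is exactly what handles the degenerate case: if the divisors $N_m$ have trivial push-forward for two distinct values $m_1,m_2$, comparing the two relations forces $\pi^*\sM^{\otimes kt(m_1-m_2)}\simeq\sO_Y$, i.e.\ $\{\pi^*\sM\}=0$, a contradiction after the harmless reduction $\{\sM\}\neq0$. In your setup, if the $D_m$ (or $\pi_*D_m$) all vanish or are $\pi$-exceptional, you are left only with the statement that $c\{K_Y\}+tm\{\pi^*\sM\}-\{D_m\}$ is pseudoeffective, which carries no contradiction and supplies no nonzero effective divisor, so Theorem~\ref{thm:nu1a} (which requires $D\neq0$) cannot be invoked. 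Nothing in your argument rules this case out.

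The second problem is the step you yourself flag as the ``main obstacle'': absorbing $c\{E\}$. Since $X$ is only assumed $\Q$-factorial (no klt or terminal hypothesis in this theorem), the exceptional divisor $E$ in $K_Y=\pi^*K_X+E$ has no sign control, and there is no reason whatsoever that $D_m$ should contain an exceptional part growing with $m$; the uniform bound $Q(\{\pi^*\sM\},\{D_m\})\le c\,Q(\{\pi^*\sM\},\{E\})$ you derive, while correct, is never used in a way that produces the required decomposition, and Remark~\ref{rem:rigid2} does not bridge this. The paper sidesteps the issue entirely by working on $X$ rather than $Y$ at this stage: it pushes the relation forward (using $\Q$-factoriality so that $\sM^{\otimes k}$, $\sO_X(k\pi_*N_m)$, $\sO_X(kK_X)$ are line bundles), shows the push-forward $\sF_X$ is pseudoeffective via Lemma~\ref{lem:psef}, twists by the hypothesis that $\sM\otimes\sO_X(-K_X)$ is pseudoeffective \emph{on $X$}, and only then pulls back to $Y$, so no exceptional error term ever appears. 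To repair your argument you would need both the $m$-uniform saturation of Lemma~\ref{lemmafund} and this descend-to-$X$ step; as written, the proposal does not prove the theorem.
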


In order to prove Theorem \ref{thm:nonvanishingFormsnu1}, we start with the following result which generalises \cite[Lemma 4.1]{LP18}. The logic of the proof is the same as in that of op.\ cit., but the details are significantly more intricate.

\begin{lemma} \label{lemmafund} 
Let $Y$ be a compact complex manifold and let $\mathcal E$ be a locally free sheaf on $Y$. Let $\mathcal L$ be a pseudoeffective line bundle on $Y$ which is not cohomologically trivial, and assume that there exists an infinite set $\mathcal T\subseteq\Z$ such that
$$ H^0(Y,\mathcal E \otimes \mathcal L^{\otimes m}) \neq 0\quad\text{for every }m\in\mathcal T.$$
Then there exist a positive integer $r$, a saturated line bundle $\mathcal G$ in $\bigwedge^r\mathcal E$ and an infinite set $\mathcal S\subseteq\N$ such that
$$H^0(Y,\mathcal G\otimes \mathcal L^{\otimes m}) \neq 0\quad\text{for every }m\in\mathcal S.$$
\end{lemma}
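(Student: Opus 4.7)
The plan is to use each section $s_m$ to produce a saturated line subbundle $\mathcal N_m\subseteq\mathcal E$, to choose a saturated subsheaf $\mathcal F\subseteq\mathcal E$ of minimal rank $r$ absorbing $\mathcal N_m$ for cofinitely many $m$, and then to wedge a fixed collection of sections with a varying one so as to obtain nonzero sections inside a single saturated line subbundle $\mathcal G\subseteq\bigwedge^r\mathcal E$. A pseudoeffectivity argument at the end will ensure that the twisting parameter can be taken in $\N$.

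First I would fix, for each $m\in\mathcal T$, a nonzero section $s_m\in H^0(Y,\mathcal E\otimes\mathcal L^{\otimes m})$ and view it as a nonzero map $\mathcal L^{\otimes -m}\to\mathcal E$. Saturating its image inside $\mathcal E$ yields a saturated rank-$1$ subsheaf $\mathcal N_m\subseteq\mathcal E$, which is a line bundle on the smooth manifold $Y$, and the factorisation $\mathcal L^{\otimes -m}\hookrightarrow\mathcal N_m$ exhibits a nonzero section of $\mathcal N_m\otimes\mathcal L^{\otimes m}$. Next, let $r$ be the smallest rank of a saturated subsheaf of $\mathcal E$ that contains $\mathcal N_m$ for every $m$ in some cofinite subset of $\mathcal T$, let $\mathcal F\subseteq\mathcal E$ realise this minimum, and, after replacing $\mathcal T$ by a cofinite subset, assume $\mathcal N_m\subseteq\mathcal F$ for all $m\in\mathcal T$. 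By the minimality of $r$, the saturation of $\sum_{m\in\mathcal T}\mathcal N_m$ inside $\mathcal E$ is all of $\mathcal F$, so the fibres $\mathcal N_m|_\eta$ span the generic fibre $\mathcal F|_\eta$; in particular I can pick $m_1^\ast,\dots,m_{r-1}^\ast\in\mathcal T$ so that $\mathcal N_{m_1^\ast},\dots,\mathcal N_{m_{r-1}^\ast}$ saturate to a subsheaf $\mathcal F'\subsetneq\mathcal F$ of rank $r-1$. Applying the minimality of $r$ to $\mathcal F'$ then guarantees infinitely many $m\in\mathcal T$ with $\mathcal N_m\not\subseteq\mathcal F'$, equivalently with $\mathcal N_{m_1^\ast},\dots,\mathcal N_{m_{r-1}^\ast},\mathcal N_m$ generically linearly independent in $\mathcal F$.

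I would then take $\mathcal G\subseteq\bigwedge^r\mathcal E$ to be the saturation of $\bigwedge^r\mathcal F$; as a saturated rank-$1$ subsheaf of a locally free sheaf on a smooth manifold, $\mathcal G$ is a line bundle. Writing $M_0:=m_1^\ast+\dots+m_{r-1}^\ast$, for every $m\in\mathcal T$ as in the previous paragraph the wedge
$$s_{m_1^\ast}\wedge\dots\wedge s_{m_{r-1}^\ast}\wedge s_m\in H^0\!\left(Y,\textstyle\bigwedge^r\mathcal E\otimes\mathcal L^{\otimes(M_0+m)}\right)$$
is nonzero by the generic linear independence, and since each of $s_{m_i^\ast}$ and $s_m$ arises from a map into $\mathcal F$, it factors through $\bigwedge^r\mathcal F\hookrightarrow\mathcal G$ and hence produces a nonzero section of $\mathcal G\otimes\mathcal L^{\otimes(M_0+m)}$. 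Letting $m$ range over all admissible values yields an infinite set $\mathcal S'\subseteq\Z$ of integers $k$ with $H^0(Y,\mathcal G\otimes\mathcal L^{\otimes k})\neq 0$.

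Finally, setting $\mathcal S:=\mathcal S'\cap\N$, I would argue $\mathcal S$ is infinite: otherwise infinitely many $k\in\mathcal S'$ are negative, and the pseudoeffective classes $\frac{1}{|k|}\{\mathcal G\}+\frac{k}{|k|}\{\mathcal L\}$ would converge to $-\{\mathcal L\}$ inside the closed cone $\mathcal E(Y)\subseteq H^{1,1}_\mathrm{BC}(Y,\R)$, placing $-\{\mathcal L\}$ in $\mathcal E(Y)$; together with $\{\mathcal L\}\in\mathcal E(Y)$, Lemma~\ref{lem:pseflines} would then force $\{\mathcal L\}=0$, contradicting the non-cohomological-triviality of $\mathcal L$. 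The hard part is the second paragraph: the choice of $\mathcal F$ of minimal rank and the subsequent dichotomy argument, together with checking that the wedge construction really lands in the single line bundle $\mathcal G$; everything else is bookkeeping or an invocation of Lemma~\ref{lem:pseflines}.
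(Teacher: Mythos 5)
Your proof is correct, and while it shares the paper's core mechanism --- wedging $r$ of the maps $\mathcal L^{\otimes -m}\to\mathcal E$ to land in a saturated line subsheaf $\mathcal G\subseteq\bigwedge^r\mathcal E$, then using the fact that the pseudoeffective cone contains no lines (Lemma~\ref{lem:pseflines}) together with $\{\mathcal L\}\neq 0$ to guarantee nonnegative twists --- the middle of your argument is organised quite differently. The paper works with the image sheaves $\mathcal F_{\mathcal R}=\operatorname{Im}\big(\bigoplus_{m\in\mathcal R}\mathcal L^{\otimes -m}\to\mathcal E\big)$, fixes a single point that is very general for all (countably many) relevant subsets $\mathcal R$, and runs a Nakayama-type argument plus a descending induction on rank over tails $\mathcal T\cap[k,\infty)$ to extract infinitely many $r$-element subsets $\mathcal Q_i$ with $\rk\mathcal F_{\mathcal Q_i}=r$, after which it takes determinants of the resulting injections into $\mathcal F_{\mathcal W}$. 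You instead saturate each image to a rank-one $\mathcal N_m$, take $\mathcal F$ of minimal rank among saturated subsheaves absorbing cofinitely many $\mathcal N_m$, fix $r-1$ sections once and for all (saturating to a corank-one $\mathcal F'\subsetneq\mathcal F$) and vary only the last wedge factor, with minimality forcing infinitely many $m$ with $\mathcal N_m\not\subseteq\mathcal F'$; this buys a simpler combinatorics (no descending induction, no infinitely many $r$-tuples) at the cost of a generic-linear-independence check for each $m$, which indeed only needs one good point per $m$. A second genuine difference is where negativity is excluded: the paper does it at the outset on $\mathbb P(\mathcal E)$, showing $\sO_Z(1)\otimes\pi^*\mathcal L^{\otimes m}$ is not pseudoeffective for $m\ll0$ so that $\mathcal T\subseteq\N$ may be assumed, whereas you postpone it and apply the same cone argument on $Y$ directly with $\mathcal G$; both are fine. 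Two small presentational points: your ``saturation of $\bigwedge^r\mathcal F$'' should be the saturation of the image of $\bigwedge^r\mathcal F\to\bigwedge^r\mathcal E$ (it is a line bundle by \cite[Lemma 1.1.15]{OSS11}, as in the paper), and the spanning/independence statements at ``generic'' points should be justified either by choosing a point outside the countably many bad analytic sets (as in the paper's Step 3) or, as suffices for you, pointwise for each fixed $m$.
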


\begin{proof}
\emph{Step 1.}
Denote $Z=\mathbb P(\mathcal E)$ with the projection $\pi\colon Z\to Y$. First note that 
$$H^0(Y,\mathcal E\otimes \mathcal L^{\otimes m})\simeq H^0(Z,\sO_Z(1)\otimes \pi^*\mathcal L^{\otimes m}).$$
Since $\mathcal L$ is pseudoeffective and not cohomologically trivial, the line bundle $\sO_Z(1)\otimes \pi^*\mathcal  L^{\otimes m}$ is not pseudoeffective for $m\ll0$ by Lemma \ref{lem:pseflines}. Hence, there are only finitely many negative integers in $\mathcal{T}$. Therefore, we may assume that $\mathcal T\subseteq\N$.

\medskip

\emph{Step 2.}
Fix a nontrivial section of $H^0(Y,\mathcal E \otimes \mathcal L^{\otimes m})$ for every $m \in \mathcal T$. It gives an inclusion 
$$f_m\colon \mathcal L^{\otimes -m} \to \mathcal E \quad\text{for }m\in \mathcal T.$$
For each subset $\mathcal R\subseteq\mathcal T$, consider the induced map
$$f_{\mathcal R}\colon\bigoplus_{m\in \mathcal R} \mathcal L^{\otimes -m} \to \mathcal E,$$
and set
$$\mathcal F_{\mathcal R}:=\image(f_{\mathcal R}).$$
Then we claim that $\mathcal F_{\mathcal R}$ is a coherent subsheaf of $\mathcal E$ for each subset $\mathcal R\subseteq\mathcal T$. Indeed, for each finite set $\mathcal R'\subseteq\mathcal R$ the sheaf $\mathcal F_{\mathcal R'}$ is coherent by \cite[Consequence 2, p.~237]{GR84}, hence the claim follows from \cite[Noether Lemma, p.~111]{GR84}.

\medskip

\emph{Step 3.}
By Step 2 and by \cite[Theorem, p.~92]{GR84}, for each subset $\mathcal R\subseteq\mathcal T$ the sheaf $\mathcal F_{\mathcal R}$ is locally free away from a closed analytic subset of $Y$. Let $\mathfrak A$ be the set of all subsets $\mathcal R\subseteq\mathcal T$ of the following form: either $|\mathcal R|\leq\rk\mathcal E$, or there exists $k\in\N$ such that $\mathcal R=\mathcal T\cap [k,+\infty)$. Then the set $\mathfrak A$ is countable, and we conclude that there exists a point $y\in Y$ such that for each $\mathcal R\in\mathfrak A$,
$$\mathcal F_{\mathcal R}\text{ is locally free at }y,\text{ and }\rk\mathcal F_{\mathcal R}=\rk\mathcal F_{\mathcal R,y}.$$
We fix such a point $y\in Y$ for the remainder of the proof.

\medskip

\emph{Step 4.}
Let $\mathcal R\in\mathfrak A$ and let $r:=\rk\mathcal F_{\mathcal R}$. In this step, we show that there exists a subset $\mathcal Q\subseteq\mathcal R$ such that $r=|\mathcal Q|=\rk\mathcal F_{\mathcal Q}$.

To that end, for each $m\in\mathcal R$ consider the induced $\sO_{Y,y}$-module ho\-mo\-mor\-phism 
$$f_{m,y}\colon \mathcal L_y^{\otimes -m}\to \mathcal E_y,$$
and let $\beta_m\in\mathcal L_y^{\otimes -m}$ be a generator of the $\sO_{Y,y}$-module $\mathcal L_y^{\otimes -m}$. Set 
$$\alpha_m:=f_{m,y}(\beta_m)\in\mathcal E_y.$$
Then the set $\{\alpha_m\mid m\in\mathcal R\}$ generates the locally free $\sO_{Y,y}$-module $\mathcal F_{\mathcal R,y}$ of rank $r$. If $\mathfrak m_y\subseteq\sO_{Y,y}$ is the maximal ideal, the $\sO_{Y,y}/\mathfrak m_y$-vector space $V_{\mathcal R,y}:=\mathcal F_{\mathcal R,y}/\mathfrak m_y\mathcal F_{\mathcal R,y}$ is of dimension $r$ by \cite[Theorem 2.3]{Mat89}, and it is generated by the classes $[\alpha_m]\in V_{\mathcal R,y}$ induced by the elements $\alpha_m$ for $m\in\mathcal R$. Then there exist a subset $\mathcal Q:=\{m_1,\dots,m_r\}\subseteq\mathcal R$ such that the classes $[\alpha_{m_1}],\dots,[\alpha_{m_r}]$ form a basis of $V_{\mathcal R,y}$, hence by \cite[Theorem 2.3 and Exercise 2.4]{Mat89} the elements $\alpha_{m_1},\dots,\alpha_{m_r}$ form a basis of the locally free $\sO_{Y,y}$-module $\mathcal F_{\mathcal R,y}$. Therefore, $\rk\mathcal F_{\mathcal Q}=r$, as desired.

\medskip

\emph{Step 5.}
In this step, we prove that there exist an infinite subset $\mathcal W\in\mathfrak A$, a positive integer $r$, and infinitely many subsets $\mathcal Q_i\subseteq\mathcal W$, for $i\in\N$, such that
$$r=|\mathcal Q_i|=\rk\mathcal F_{\mathcal Q_i}=\rk\mathcal F_{\mathcal W}\quad\text{for all }i\in\N.$$
The proof is by descending induction on $r$. Set $r:=\rk\mathcal F_{\mathcal T}$. If there are infinitely many subsets $\mathcal Q_i\subseteq\mathcal T$, for $i\in\N$, such that $r=|\mathcal Q_i|=\rk\mathcal F_{\mathcal Q_i}$ for all $i\in\N$, then we set $\mathcal W:=\mathcal T$.

Otherwise, there are only finitely many subsets $\mathcal Q_1,\dots,\mathcal Q_\ell\subseteq\mathcal T$, such that $r=|\mathcal Q_i|=\rk\mathcal F_{\mathcal Q_i}$ for all $i=1,\dots,\ell$. Fix $k\in\N$ such that $k$ is larger than any element in $\bigcup_{i=1}^\ell\mathcal Q_i$, and set $\mathcal T':=\mathcal T\cap [k,+\infty)$. Then $\mathcal T'\in\mathfrak A$, and we claim that $\rk\mathcal F_{\mathcal T'}<r$. Indeed, if $\rk\mathcal F_{\mathcal T'}=r$, then by Step 4 there exists a subset $\mathcal Q\subseteq\mathcal T'$ such that $r=|\mathcal Q|=\rk\mathcal F_{\mathcal Q}$. But clearly $\mathcal Q\neq\mathcal Q_i$ for all $i=1,\dots,\ell$, which is a contradiction. We replace $\mathcal T$ by $\mathcal T'$ and continue the procedure analogously, which must end after finitely many steps.

\medskip

\emph{Step 6.}
Fix an infinite subset $\mathcal W\in\mathfrak A$, a positive integer $r$, and infinitely many subsets $\mathcal Q_i\subseteq\mathcal W$, for $i\in\N$, as in Step 5. Then each map
\begin{equation}\label{eq:inclusion}
f_{\mathcal Q_i}\colon\bigoplus_{m\in \mathcal Q_i} \mathcal L^{\otimes -m} \to \mathcal F_{\mathcal Q_i}
\end{equation}
is a surjective morphism between coherent sheaves of rank $r$, hence it is injective on a complement of a closed analytic subset of $Y$ by \cite[Theorem, p.~92]{GR84} and \cite[Theorem 2.4]{Mat89}. Thus, $f_{\mathcal Q_i}$ is injective on the whole $Y$ since the sheaf $\bigoplus_{m\in \mathcal Q_i} \mathcal L^{\otimes -m}$ is torsion-free. Taking determinants in \eqref{eq:inclusion} yields inclusions
\begin{equation}\label{eq:inclusion2}
\mathcal L^{\otimes {-}q_i} \to \det\mathcal F_{\mathcal Q_i}\subseteq\det\mathcal F_{\mathcal W}\subseteq \bigwedge^r\mathcal E\quad\text{for each }i\in\N,
\end{equation}
where $q_i$ is the sum of all elements in $\mathcal Q_i$. Let $\mathcal G$ be the the saturation of $\det\mathcal F_{\mathcal W}$ in $\bigwedge^r\mathcal E$. Then $\mathcal G$ is a line bundle on $Y$ by \cite[Lemma 1.1.15]{OSS11}. The set $\mathcal S:=\{q_i\mid i\in\N\}\subseteq\N$ is infinite, and by \eqref{eq:inclusion2} we have
$$H^0(Y,\mathcal G\otimes \mathcal L^{\otimes m}) \ne 0\quad\text{for all }m\in\mathcal S,$$
which proves the result.
\end{proof}

\begin{proposition}\label{pro:quotientKaehler}
Let $Y$ be a compact Kähler manifold that is not uniruled. Let $\mathcal E$ be a positive tensor power of $\Omega_Y^1$, let $\mathcal G$ be a saturated line bundle in~$\mathcal E$, and let $\sL$ be a line bundle on $X$. Assume that there exists an infinite set $\mathcal S\subseteq\N$ and Weil divisors $N_m\geq0$ for $m\in\mathcal S$ such that
\begin{equation}\label{eq:infmanym}
\mathcal O_Y(N_m)\simeq \mathcal G\otimes \sL^{\otimes m} \quad\text{for every }m\in\mathcal S.
\end{equation}
Then there exist a positive integer $\ell$ and a pseudoeffective line bundle $\sF$ on~$Y$ such that
$$\sF \simeq \sL^{\otimes m}\otimes \sO_Y(\ell K_Y-N_m) \quad\text{for every }m\in\mathcal S.$$
\end{proposition}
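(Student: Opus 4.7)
The plan is to pass from the saturated inclusion $\mathcal G \hookrightarrow \mathcal E$ to its torsion-free cokernel, take determinants, and then invoke the Kähler pseudoeffectivity result \ref{thm:quotientspsef}; the resulting pseudoeffective line bundle will be matched with the desired $\sF$ via \eqref{eq:infmanym}.

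First, I would observe that since $Y$ is a non-uniruled compact Kähler manifold, Theorem \ref{thm:Ou} provides that $K_Y$ is pseudoeffective, so that the hypothesis of Theorem \ref{thm:quotientspsef} is satisfied. Write $\mathcal E = (\Omega_Y^1)^{\otimes p}$ for some integer $p \geq 1$. Because $\mathcal G \subseteq \mathcal E$ is saturated, the quotient $\mathcal Q := \mathcal E/\mathcal G$ is a torsion-free coherent sheaf, and Theorem \ref{thm:quotientspsef} then yields that $\det \mathcal Q$ is pseudoeffective.

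Next, using the standard identities $\det \mathcal Q \simeq \det \mathcal E \otimes \mathcal G^{-1}$ and $\det (\Omega_Y^1)^{\otimes p} \simeq \sO_Y(\ell K_Y)$ with $\ell := p\,n^{p-1}$ and $n := \dim Y$ (both immediate by induction on $p$ from the product rule for determinants of tensor products), the pseudoeffectivity of $\det \mathcal Q$ rephrases as pseudoeffectivity of the line bundle
$$\sF := \sO_Y(\ell K_Y) \otimes \mathcal G^{-1}.$$
Finally, inverting \eqref{eq:infmanym} gives $\mathcal G^{-1} \simeq \sO_Y(-N_m) \otimes \sL^{\otimes m}$ for every $m \in \mathcal S$, hence $\sF \simeq \sL^{\otimes m} \otimes \sO_Y(\ell K_Y - N_m)$ for every such $m$, which is the required conclusion. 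Note that this automatically forces $\ell$ to be uniform in $m$, as demanded by the statement.

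The only genuinely non-trivial ingredient is Theorem \ref{thm:quotientspsef} itself; the remainder is formal manipulation of line bundles and determinants. What is essential for the argument to work is that $\mathcal G$ be \emph{saturated} — not merely a subsheaf — so that $\mathcal Q$ is torsion-free and the hypotheses of \ref{thm:quotientspsef} are actually met; this is also what guarantees that $\mathcal G$ is a genuine line bundle and not just a rank-one subsheaf with singularities.
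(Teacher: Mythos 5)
Your proposal is correct and follows essentially the same route as the paper: saturation of $\mathcal G$ gives a torsion-free quotient $\mathcal Q=\mathcal E/\mathcal G$, Theorems \ref{thm:Ou} and \ref{thm:quotientspsef} give pseudoeffectivity of $\det\mathcal Q\simeq\det\mathcal E\otimes\mathcal G^{-1}$, and \eqref{eq:infmanym} converts this into the stated form of $\sF$. The only difference is cosmetic: you compute $\ell=p\,n^{p-1}$ explicitly, whereas the paper merely asserts the existence of some $\ell$ with $\det\mathcal E\simeq\sO_Y(\ell K_Y)$.
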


\begin{proof}
The proof is almost the same as that of \cite[Proposition 4.2]{LP18}, which we recall here for completeness. If we set $\mathcal Q:=\mathcal E/\mathcal G$, then from the exact sequence $0\to\mathcal G\to\mathcal E\to\mathcal Q\to 0$ we obtain
\begin{equation}\label{eq:ell}
\det\mathcal E\simeq\mathcal G\otimes\det\mathcal Q.
\end{equation}
The sheaf $\mathcal Q$ is torsion-free since $\mathcal G$ is saturated in $\mathcal E$, hence $\mathcal F :=\det \mathcal Q$ is pseudoeffective by Theorems \ref{thm:Ou} and \ref{thm:quotientspsef}. There exists a positive integer $\ell$ such that $\det\mathcal E\simeq \sO_Y(\ell K_Y)$, and the result follows from \eqref{eq:infmanym} and \eqref{eq:ell}.
\end{proof}

Putting all this together, we have:

\begin{proof}[Proof of Theorem \ref{thm:nonvanishingFormsnu1}]
The proof follows the logic of that of \cite[The\-o\-rem~10.1]{Laz24}, with appropriate modifications.

\medskip

\emph{Step 1.}
If $\{\sM\}=0$, then the claim is obvious. Therefore, from now on we may assume that $\{\sM\}\neq0$, hence
\begin{equation}\label{eq:57b}
\{\pi^*\sM\}\neq0.
\end{equation}
We apply Lemma \ref{lemmafund} with $\mathcal E := (\Omega^1_Y)^{\otimes p} $ and $\mathcal L := \pi^*\sM^{\otimes t}$. Then there exist a positive integer $r$, a saturated line bundle $\mathcal G$ in $\bigwedge^r\mathcal E$, an infinite set $\mathcal S\subseteq\N$, and integral divisors $N_m\geq0$ for $m\in\mathcal S$ such that 
$$ \sO_Y(N_m) \simeq \mathcal G\otimes \mathcal L^{\otimes m}\quad\text{for all }m\in\mathcal S. $$
Since $Y$ is not uniruled by assumption, Proposition~\ref{pro:quotientKaehler} implies that there exist a positive integer $\ell$ and a pseudoeffective line bundle $\sF$ such that
\begin{equation}\label{eq:manyrelations}
\sF \simeq \pi^*\sM^{\otimes tm}\otimes\sO_Y(\ell K_Y-N_m) \quad\text{for all }m\in\mathcal S.
\end{equation}

\medskip

\emph{Step 2.}
Since $X$ is $\Q$-factorial and $\sM$ is a $\Q$-line bundle by assumption, there exists a positive integer $k$ such that the sheaves $\sM^{\otimes k}$, $\sO_X(k\pi_*N_m)$, and $\sO_X(kK_X)$ are line bundles, and set $\sF_X:=(\pi_*\sF^{\otimes k})^{**}$. Taking the $k$-th tensor power in \eqref{eq:manyrelations}, applying $\pi_*$ and taking the double dual, we obtain
\begin{equation}\label{eq:57}
\sF_X \simeq \sM^{\otimes ktm}\otimes \sO_X(k\ell K_X-k\pi_*N_m) \quad\text{for all }m\in\mathcal S.
\end{equation}
In particular, $\sF_X$ is a line bundle on $X$, hence it is pseudoeffective by Lemma~\ref{lem:psef}. Thus, the line bundle
$$\sH:=\sF_X\otimes\big(\sM^{\otimes k\ell}\otimes \sO_X({-}k\ell K_X)\big)$$
is pseudoeffective, since $\sM^{\otimes k\ell}\otimes \sO_X({-}k\ell K_X)$ is pseudoeffective by as\-sump\-tion. Then \eqref{eq:57} gives
$$ \sO_X(k\pi_*N_m)\otimes\sH \simeq \sM^{\otimes k(tm+\ell)} \quad\text{for all }m\in\mathcal S.$$
Pulling back these relations to $Y$, we obtain
\begin{equation}\label{eq:57a}
\sO_X(k\pi^*\pi_*N_m)\otimes\pi^*\sH \simeq \pi^*\sM^{\otimes k(tm+\ell)} \quad\text{for all }m\in\mathcal S.
\end{equation}

\medskip

\emph{Step 3.}
Assume first that there exist two distinct $m_1,m_2\in\mathcal S$ such that $\pi^*\pi_*N_{m_1}=\pi^*\pi_*N_{m_2}=0$. Then \eqref{eq:57a} gives
$$\pi^*\sH\simeq \pi^*\sM^{\otimes k(tm_1+\ell)}\simeq \pi^*\sM^{\otimes k(tm_2+\ell)}.$$
Therefore, $\pi^*\sM^{\otimes kt(m_1-m_2)}\simeq\sO_Y$, and hence $\{\pi^*\sM\}=0$, which contradicts \eqref{eq:57b}.

\medskip

\emph{Step 4.}
Therefore, we may assume that there exists $m\in\mathcal S$ such that $\pi^*\pi_*N_m\neq0$. Since $\pi^*\pi_*N_m\geq0$ and the line bundle $\pi^*\sH$ is pseudoeffective, by Theorem \ref{thm:nu1a} there exists a $\Q$-divisor $E\geq0$ on $Y$ such that $\{\pi^*\sM\}=\{E\}$. Therefore, $\{\sM\}=\{\pi_*E\}$, as desired.
\end{proof}

The first consequence of Theorem \ref{thm:nonvanishingFormsnu1} is the following result, whose proof uses methods of that of \cite[Corollary 4.5]{LP18}.

\begin{theorem}\label{thm:nu1algsing}
Let $X$ be a $\Q$-factorial compact Kähler variety which is not uniruled. Let $\sM$ be a nef $\Q$-line bundle on $X$, and assume the $\Q$-line bundle $\sM\otimes\sO_X({-}K_X)$ is pseudoeffective. Let $\pi\colon Y\to X$ be a resolution of $X$, and let $Q\colon H^{1,1}(Y,\R)\times H^{1,1}(Y,\R)\to\R$ be a good bilinear form. Assume that:
\begin{enumerate}[\normalfont (a)]
\item $Q\big(\{\pi^*\sM\},\{\pi^*\sM\}\big)=0$,
\item $\chi(Y,\sO_Y)\neq0$, and
\item there exist a closed positive current $T\in\{\pi^*\sM\}$ and a $\Q$-divisor $G\geq0$ on $Y$ such that $\mathcal I(mT)=\sO_Y({-}mG)$ for all sufficiently divisible positive integers $m$. 
\end{enumerate}
Then there exists a $\Q$-divisor $D\geq0$ such that $\{\sM\}=\{D\}$.
\end{theorem}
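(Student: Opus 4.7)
The strategy is to invoke Theorem~\ref{thm:nonvanishingFormsnu1}: since all its other hypotheses are already present in our assumptions, what must be verified is the existence of an integer $p\geq 1$ such that $H^0\bigl(Y,(\Omega^1_Y)^{\otimes p}\otimes\pi^*\sM^{\otimes tm}\bigr)\neq 0$ for infinitely many $m\in\N$. The plan is to split into two cases according to whether the $\Q$-divisor $G$ vanishes, with the Euler-characteristic hypothesis $\chi(Y,\sO_Y)\neq 0$ entering only in the second case.

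If $G\neq 0$, the identity $\mathcal I(mT)=\sO_Y(-mG)$ for all sufficiently divisible $m$ forces $G$ to coincide with the divisorial part of the Siu decomposition $T=R+G$, while the residual part $R$ is a closed positive $(1,1)$-current whose class $\beta:=\{R\}=\{\pi^*\sM\}-\{G\}$ is therefore pseudoeffective. Since $\sM$ is nef, the class $\alpha:=\{\pi^*\sM\}$ is modified nef and rational, so Theorem~\ref{thm:nu1a}(a) applies with $D:=G\neq 0$ and $\beta$ as above, yielding a $\Q$-divisor $E\geq 0$ on $Y$ with $\{\pi^*\sM\}=\{E\}$. Pushing forward by $\pi$ gives $\{\sM\}=\{\pi_*E\}$, and one may take $D:=\pi_*E\geq 0$. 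This case does not invoke the hypothesis on $\chi(Y,\sO_Y)$.

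If $G=0$, then $\mathcal I(mT)=\sO_Y$ for all sufficiently divisible $m$, and every Lelong number of $T$ vanishes. Here the plan is to produce the required sections via Hirzebruch--Riemann--Roch combined with Nadel vanishing. Setting $P_p(m):=\chi\bigl(Y,(\Omega^1_Y)^{\otimes p}\otimes\pi^*\sM^{\otimes tm}\bigr)$, one has $P_0(0)=\chi(Y,\sO_Y)\neq 0$ by hypothesis, and by Serre duality the same remark shows that $\chi\bigl(Y,\sO_Y(K_Y)\otimes\pi^*\sM^{\otimes tm}\bigr)$ is a nonzero polynomial in $m$. Via the antisymmetrisation inclusion $\sO_Y(K_Y)\hookrightarrow(\Omega^1_Y)^{\otimes n}$, sections of $\sO_Y(K_Y)\otimes\pi^*\sM^{\otimes tm}$ pull back to sections of $(\Omega^1_Y)^{\otimes n}\otimes\pi^*\sM^{\otimes tm}$, so it suffices to produce the former. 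Nadel vanishing applied to the current $tmT$---whose multiplier ideal is trivial because $G=0$---together with a small ample perturbation forces $H^i=0$ for $i>0$, and the Euler-characteristic computation then gives $h^0\neq 0$ for infinitely many $m$. Theorem~\ref{thm:nonvanishingFormsnu1} concludes.

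The main obstacle is this vanishing step in the Kähler, non-projective setting: because $\pi^*\sM^{\otimes tm}$ is nef but never big, classical Kodaira and Kawamata--Viehweg vanishing are unavailable, and one must rely on the Nadel vanishing enabled by the trivial multiplier ideal. Controlling the ample perturbation used in Nadel while preserving the nonzero polynomial behaviour of the Riemann--Roch computation---and ultimately extracting sections of the unperturbed tensor sheaf---is the technical heart of the argument.
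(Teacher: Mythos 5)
Your case $G\neq 0$ is essentially fine: there $\mathcal I(mT)=\sO_Y(-mG)$ does force $G$ to be (dominated by) the divisorial part of the Siu decomposition of $T$, so $\{\pi^*\sM\}=\{G\}+\{T-G\}$ with $T-G\geq 0$, and Theorem~\ref{thm:nu1a}(a) applies since $\{\pi^*\sM\}$ is modified nef, rational, and $Q$-isotropic; this even bypasses hypothesis (b), which is consistent with the paper (the same mechanism appears in the proof of Theorem~\ref{thm:nu1}(b)). The substance of the theorem, however, is the case $G=0$, and there your argument has a genuine gap. Nadel vanishing is not ``enabled by the trivial multiplier ideal'': it requires the curvature current to dominate a K\"ahler form, i.e.\ the class to be big, whereas here $Q\big(\{\pi^*\sM\},\{\pi^*\sM\}\big)=0$ excludes bigness from the start. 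The ``small ample perturbation'' does not rescue this: on a non-projective compact K\"ahler $Y$ there is no ample line bundle to perturb by, a K\"ahler-class perturbation leaves the category of line bundles so there is no coherent cohomology to compute, and even in the projective case the vanishing and the Riemann--Roch polynomial would refer to the perturbed bundle, with no way to ``extract sections of the unperturbed tensor sheaf'' --- exactly the point you flag as the technical heart and leave unresolved. So as written the $G=0$ case does not produce the sections needed to feed Theorem~\ref{thm:nonvanishingFormsnu1}.

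The missing idea is to run the argument by contradiction and to replace any positivity-based vanishing theorem by the hard Lefschetz theorem of Demailly--Peternell--Schneider \cite[Theorem~0.1]{DPS01}, which needs only pseudoeffectivity. Assuming no $\Q$-divisor $D$ exists, the contrapositive of Theorem~\ref{thm:nonvanishingFormsnu1} gives $H^0\big(Y,\Omega^p_Y\otimes\pi^*\sM^{\otimes m}\big)=0$ for all $p$ and all sufficiently divisible $m$, hence also after twisting by $\mathcal I(mT)$. The DPS theorem asserts that cupping with $\omega^p$ surjects $H^0\big(Y,\Omega^{n-p}_Y\otimes L\otimes\mathcal I(h)\big)$ onto $H^p\big(Y,\sO_Y(K_Y)\otimes L\otimes\mathcal I(h)\big)$ for a pseudoeffective $L$, so all higher cohomology of $\sO_Y(K_Y)\otimes\pi^*\sM^{\otimes m}\otimes\mathcal I(mT)=\sO_Y(K_Y)\otimes\pi^*\sM^{\otimes m}\otimes\sO_Y(-mG)$ vanishes; the Euler characteristic is then zero for infinitely many $m$, hence identically zero as a polynomial in $m$, and evaluating at $m=0$ gives $\chi(Y,\sO_Y)=0$, contradicting (b). This is precisely how hypothesis (c) and the nonvanishing criterion interact in the paper, and it is the step your proposal cannot supply with Nadel-type vanishing.
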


\begin{proof}
Assume for contradiction that such a $\Q$-divisor $D$ does not exist. Then Theorem \ref{thm:nonvanishingFormsnu1} yields that for all $p\geq 0$ and for all sufficiently divisible $m>0$ we have
$$ H^0\big(Y,\Omega^p_Y \otimes \pi^*\sM^{\otimes m}\big)=0,$$
and thus
$$H^0\big(Y,\Omega^p_Y\otimes \pi^*\sM^{\otimes m}\otimes\mathcal I(mT)\big) = 0.$$
Then \cite[Theorem 0.1]{DPS01} implies that for all $p\geq 0$ and for all sufficiently divisible $m>0$ we have
$$H^p\big(Y,\sO_Y(K_Y)\otimes \pi^*\sM^{\otimes m}\otimes\mathcal I(mT)\big) = 0.$$
Hence, by (c) we have
$$H^p\big(Y,\sO_Y(K_Y)\otimes \pi^*\sM^{\otimes m}\otimes\sO_Y({-}mG)\big) = 0$$
for all $p\geq 0$ and for all sufficiently divisible $m>0$. This implies
\begin{equation}\label{eq:89}
\chi\big(Y,\sO_Y(K_Y)\otimes \pi^*\sM^{\otimes m}\otimes\sO_Y({-}mG)\big) = 0
\end{equation}
for all sufficiently divisible $m>0$. Since the Euler--Poincar\'e characteristic $\chi\big(Y,\sO_Y(K_Y)\otimes \pi^*\sM^{\otimes m}\otimes\sO_Y({-}mG)\big)$ is a polynomial in $m$ by the Hirzebruch--Riemann--Roch theorem, \eqref{eq:89} implies that it must be identically zero, hence $\chi\big(Y,\sO_Y(K_Y)\big) = 0$ by setting $m=0$. Equivalently, by Serre duality we have $\chi(Y,\sO_Y) = 0$, which is a contradiction to (b).
\end{proof}

We now come to the main results of this section. There are two flavours: the first one deals with line bundles of numerical dimension $1$ on Kähler varieties, whereas the second one deals with parabolic line bundles on hyperkähler manifolds.

\begin{theorem}\label{thm:nu1}
Let $X$ be a $\Q$-factorial compact Kähler variety which is not uniruled. Let $\sM$ be a nef $\Q$-line bundle on $X$ such that $\nd(\sM)\leq 1$, and assume the $\Q$-line bundle $\sM\otimes\sO_X({-}K_X)$ is pseudoeffective. Let $\pi\colon Y\to X$ be a resolution of $X$, and let $T_{\min}\in\{\pi^*\sM\}$ be a current with minimal singularities.
\begin{enumerate}[\normalfont (a)]
\item If $\chi(Y,\sO_Y)\neq0$ and if all Lelong numbers of $T_{\min}$ are zero, then there exists a $\Q$-divisor $D\geq0$ such that $\{\sM\}=\{D\}$.
\item If not all Lelong numbers of $T_{\min}$ are zero, then there exists a $\Q$-divisor $D\geq0$ such that $\{\sM\}=\{D\}$, and the class $\{\pi^*\sM\}$ is rigid.
\end{enumerate}
\end{theorem}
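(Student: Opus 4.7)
For part (a), I would apply Theorem~\ref{thm:nu1algsing} with the good bilinear form $Q_\omega$ from Proposition~\ref{prop:twoexamples}(a), where $\omega$ is any Kähler class on~$Y$. Since $\nd(\sM)\leq 1$, the class $\{\sM\}^2$ vanishes and hence $\{\pi^*\sM\}^2=0$, giving $Q_\omega(\{\pi^*\sM\},\{\pi^*\sM\})=0$. The vanishing of all Lelong numbers of $T_{\min}$ yields $\nu(mT_{\min},x)=0$ for every positive integer $m$ and every $x\in Y$, so Skoda's lemma gives $\mathcal{I}(mT_{\min})=\sO_Y$ for every $m$; this verifies condition~(c) of Theorem~\ref{thm:nu1algsing} with $G=0$. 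Together with the hypothesis $\chi(Y,\sO_Y)\neq0$, Theorem~\ref{thm:nu1algsing} then produces the desired effective $\Q$-divisor on $X$.

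For part (b), I would first treat the divisorial case, in which there exists a prime divisor $D_1\subseteq Y$ with $\nu:=\nu(T_{\min},D_1)>0$. Then $T_{\min}-\nu[D_1]\geq0$ and one has the decomposition $\{\pi^*\sM\}=\nu\{D_1\}+\{T_{\min}-\nu[D_1]\}$. Applying Theorem~\ref{thm:nu1a}(a) with $\alpha=\{\pi^*\sM\}$, $D=\nu D_1$ and $\beta=\{T_{\min}-\nu[D_1]\}$, using that $\alpha$ is nef (hence modified nef) and $Q_\omega(\alpha,\alpha)=0$, yields a $\Q$-divisor $E\geq0$ on $Y$ with $\{\pi^*\sM\}=\{E\}$. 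Pushing forward and using the $\Q$-factoriality of $X$ gives the required effective $\Q$-divisor on $X$ representing $\{\sM\}$.

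For rigidity, I would observe that the two alternatives of Theorem~\ref{thm:nu1a}(b) leave only $R(\beta)=0$: if instead $\alpha$ were Siu-residual, then $D(\alpha)$ would vanish, contradicting $\nu>0$. A direct potential-level check shows that $T_{\min}-\nu[D_1]$ is a current with minimal singularities in $\beta$, since if $\varphi_{\min}$ is a minimal potential for $T_{\min}$ and $s$ is a defining section of $D_1$, then $\varphi_{\min}-\nu\log|s|$ is minimal in the relevant $\PSH$-space. Consequently, the residual part $R$ of $T_{\min}$ satisfies $\{R\}=R(\beta)=0$, and Lemma~\ref{lem:cohotozero} forces $R=0$. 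Thus $T_{\min}$ is purely divisorial, $\alpha=D(\alpha)$, and $\alpha$ is rigid by Remark~\ref{rem:rigid2}.

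The main obstacle is the non-divisorial subcase of (b): the hypothesis permits $T_{\min}$ to have zero generic Lelong numbers along every prime divisor while still having a positive Lelong number at a point of codimension $\geq 2$. In that situation the Siu decomposition of $T_{\min}$ is purely residual, so no effective decomposition $\{D\}+\beta=\alpha$ with $D\neq 0$ is immediately available to feed into Theorem~\ref{thm:nu1a}. I would attempt to handle this by blowing up a Lelong upperlevel set of codimension $\geq 2$ to create a divisorial Lelong component of the pullback of $T_{\min}$, reducing to the divisorial case on a higher birational model, and then descending both the effective representative (by pushforward and $\Q$-factoriality) and the rigidity conclusion (using the intrinsicality of $D(\alpha)$ under proper bimeromorphic pullback) back to $Y$.
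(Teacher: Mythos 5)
Your part (a) is exactly the paper's argument (Skoda's lemma plus Theorem~\ref{thm:nu1algsing} for $Q_\omega$), and your ``divisorial case'' of (b) is essentially sound. The genuine gap is the case you yourself flag as the main obstacle: when $T_{\min}$ has positive Lelong numbers only along sets of codimension at least $2$, your argument is an unexecuted sketch, and the mechanism you propose for it does not work as stated. First, blowing up ``a Lelong upperlevel set'' raises real issues you do not address: $E_c(T_{\min})$ may be singular, so the blowup need not be smooth or Kähler, and, more importantly, you never justify the key point that the pulled-back current acquires a \emph{positive generic Lelong number along the exceptional divisor}. The paper avoids all of this by simply blowing up a single point $y$ with $\nu(T_{\min},y)>0$; the crucial input is \cite[Corollary~4]{Fav99}, which gives $\nu(\mu^*T_{\min},E)\geq\nu(T_{\min},y)$ for the exceptional divisor $E$ of the point blowup. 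With that, $S:=\mu^*T_{\min}-\nu E\geq0$, one applies Theorem~\ref{thm:nu1a} on the blowup with the form $Q_{\omega'}$, and pushes forward; note the paper does not case-split at all --- this single construction handles both your divisorial and non-divisorial cases uniformly.

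Second, your proposed descent of rigidity via ``the intrinsicality of $D(\alpha)$ under proper bimeromorphic pullback'' is false, and in fact contradicts the very phenomenon being exploited: a class whose minimal current has only isolated positive Lelong numbers is Siu-residual on $Y$, yet after blowing up such a point its pullback acquires a nonzero divisorial part along $E$. So the divisorial part is \emph{not} preserved under pullback. The paper's rigidity argument instead runs entirely on the blowup: one needs that $\mu^*T_{\min}$ still has minimal singularities in $\{\mu^*\pi^*\sM\}$ (this is \cite[Proposition~5.1]{LX24}, and it is also what rules out the alternative $R(\{S\})\neq0$ of Theorem~\ref{thm:nu1a}(b), since a Siu-residual class cannot have a minimal current with divisorial part along $E$); then $R(\{S\})=0$ forces, via \cite[Remark~5.4]{Laz24} and Lemma~\ref{lem:cohotozero}, that $\mu^*T_{\min}$ is purely divisorial, so $\{\mu^*\pi^*\sM\}=D(\{\mu^*\pi^*\sM\})$ is rigid by Remark~\ref{rem:rigid2}, and rigidity descends to $\{\pi^*\sM\}$ by \cite[Corollary~4.2]{LX24}, not by any invariance of the Siu decomposition. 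Without these two ingredients (the Lelong-number comparison under the point blowup and the correct descent of rigidity), your treatment of the non-divisorial case does not close.
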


\begin{proof}
We prove the theorem in several steps.

\medskip

\emph{Step 1.}
In this step we show (a). By the assumptions of (a) and by Skoda's lemma, we have $\mathcal I(mT_{\min})=\sO_Y$ for all positive integers $m$. Fix a Kähler class $\eta$ on $Y$. Then we conclude by Theorem \ref{thm:nu1algsing} applied to the good bilinear form $Q_\eta\colon H^{1,1}(Y,\R)\times H^{1,1}(Y,\R)\to\R$ from Proposition~\ref{prop:twoexamples}.

\medskip

\emph{Step 2.}
In the remainder of the proof we show (b). By the assumptions of (b) there exists a point $y\in Y$ such that $\nu:=\nu(T_{\min},y)>0$. Let $\mu\colon Z \to Y$ be the blowup of $Y$ at $y$, let $E$ be the exceptional divisor of $\mu$, and let $\eta'$ be a Kähler class on $Z$. Then by \cite[Corollary 4]{Fav99} we have
\begin{equation}\label{eq:Lelongineq}
\nu(\mu^*T_{\min},E)\geq\nu.
\end{equation}
Hence, if we denote
\begin{equation}\label{eq:59}
S:=\mu^*T_{\min}-\nu E,
\end{equation}
then we have $S\geq0$ by \eqref{eq:Lelongineq} and by considering the Siu decomposition of $\mu^*T_{\min}$, and
\begin{equation}\label{eq:898}
\{\mu^*\pi^*\sM\}=\{\nu E\}+\{S\}.
\end{equation}
Therefore, by Theorem~\ref{thm:nu1a}(a) applied to the good bilinear form 
$$Q_{\eta'}\colon H^{1,1}(Z,\R)\times H^{1,1}(Z,\R)\to\R,$$
there exists a $\Q$-divisor $D'\geq0$ on $Z$ such that $D'\in \{\mu^*\pi^*\sM\}$. Setting $D:=\pi_*\mu_*D'$, we deduce that $\{\sM\}=\{D\}$. This shows the first statement in (b).

\medskip

\emph{Step 3.}
It remains to show the second statement in (b). Assume first that $R\big(\{S\}\big)\neq0$.\footnote{Recall the Siu decomposition of the class $\{S\}$ as in Definition \ref{dfn:Siuclasses}.} Then Theorem~\ref{thm:nu1a}(b) applied to \eqref{eq:898} and to the good bilinear form $Q_{\eta'}$ gives that the class $\{\mu^*\pi^*\sM\}$ is Siu-residual. Since the current $\mu^*T_{\min}\in\{\mu^*\pi^*\sM\}$ has minimal singularities by \cite[Proposition~5.1]{LX24}, we conclude that the current $\mu^*T_{\min}$ has no divisorial part in its Siu decomposition, which contradicts \eqref{eq:Lelongineq}.

\medskip

\emph{Step 4.}
Therefore, $R\big(\{S\}\big)=0$. Let $S=R+G$ be the Siu decomposition of $S$, where $R$ is its residual part. Since $S$ is a current with minimal singularities in the class $\{S\}$ by \cite[Remark 5.4]{Laz24}, we have $R\in R\big(\{S\}\big)$, hence $R=0$ by Lemma \ref{lem:cohotozero}(b). Therefore, $\mu^*T_{\min}=G+\nu E$ by \eqref{eq:59}, and thus,
$$ D\big(\{\mu^*\pi^*\sM\}\big)=\{G+\nu E\}=\{\mu^*T_{\min}\}=\{\mu^*\pi^*\sM\}.$$
We deduce that the class $\{\mu^*\pi^*\sM\}$ is rigid by Remark \ref{rem:rigid2}, hence the class $\{\pi^*\sM\}$ is rigid by \cite[Corollary~4.2]{LX24}. This finishes the proof.
\end{proof}

\begin{remark}
In the setup of Theorem \ref{thm:nu1}, assume that there does not exist a $\Q$-divisor $D\geq0$ such that $\{\sM\}=\{D\}$. Then it follows from Steps 1 and~2 of the proof of Theorem~\ref{thm:nu1} (where the assumption that $T_{\min}$ is a current with minimal singularities was not used) that for any resolution $\pi\colon Y\to X$ we have $\chi(Y,\sO_Y)=0$, and that all Lelong numbers of any closed positive current $T\in \{\pi^*\sM\}$ vanish.
\end{remark}

Now we have:

\begin{proof}[Proof of Theorem \ref{thm:main1}]
Since klt singularities are rational by \cite[Theo\-rem~VII.1.1 and Lemma VII.1.7]{Nak04} and since $\chi(X,\sO_X)\neq 0$ by assumption, we have that $\chi(Y,\sO_Y)\neq 0$ for every resolution of singularities $Y\to X$. Thus, we may apply Theorem \ref{thm:nu1} for $\sM:=\sO_X(K_X+\Delta)$ and conclude that there exists a $\Q$-divisor $D\geq0$ such that $\{K_X+\Delta\}=\{D\}$.

Let $\pi\colon Y\to X$ be a log resolution of $(X,\Delta)$. Then there exist $\Q$-divisors $\Delta_Y \geq 0$ and $E\geq0$ without common components such that
\begin{equation}\label{eq:canonical log resolution}
K_Y+\Delta_Y\sim_\Q \pi^*(K_X+\Delta)+E,
\end{equation}
and the pair $(Y,\Delta_Y)$ is klt. Since $\{K_Y+\Delta_Y\}=\{\pi^*D+E\}$, we conclude that $\kappa(Y,K_Y+\Delta_Y)\geq0$ by \cite[Corollary 5.9]{Wan21}. Since by \eqref{eq:canonical log resolution} we have $\kappa(X,K_X+\Delta)=\kappa(Y,K_Y+\Delta_Y)$, the proof is complete.
\end{proof}

Corollary \ref{cor:main1} follows immediately from Theorems \ref{thm:main1} and \ref{thm:GM}. Theorem~\ref{thm:main2} follows immediately from Theorem \ref{thm:nu1} by rationality of klt singularities. Similarly as Corollary \ref{cor:main1}, one can deduce the following result in the case of Calabi--Yau $4$-folds.

\begin{corollary}\label{cor:main2}
Let $X$ be a $\Q$-factorial compact Kähler klt variety of dimension $4$ such that $X$ is not uniruled and $c_1(X)=0$. Let $\sL$ be a nef line bundle on $X$ of numerical dimension~$1$. Assume that $\chi(X,\sO_X)\neq0$. Let $\pi\colon Y\to X$ be a resolution and let $T_{\min}$ be a current with minimal singularities in the cohomology class of $\pi^*\sL$. If all Lelong numbers of $T_{\min}$ vanish, then there exists a semiample $\Q$-divisor $D$ on $X$ which belongs to the cohomology class of $\sL$.
\end{corollary}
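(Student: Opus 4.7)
The plan is to combine Theorem~\ref{thm:main2} with Theorem~\ref{thm:GM} (which is unconditional in dimension $4$). First, I would apply Theorem~\ref{thm:main2} to obtain a $\Q$-divisor $D \geq 0$ on $X$ with $\{D\} = \{\sL\}$ in $H^{1,1}_{\mathrm{BC}}(X,\R)$, and then argue that this very $D$ is semiample.

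Second, the hypothesis $c_1(X) = 0$ means that $K_X$ is numerically trivial, so in particular $\nd(K_X) = 0$. By \cite[Corollary~1.18]{CGP23} (cited in the introduction), this yields $\kappa(X, K_X) \geq 0$. Since any effective $\Q$-divisor numerically equivalent to zero must itself be zero, I conclude $K_X \sim_\Q 0$.

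For a sufficiently small rational $\varepsilon > 0$, the pair $(X, \varepsilon D)$ is klt by openness of the klt condition (using the $\Q$-factoriality of $X$). From $K_X \sim_\Q 0$ I obtain $K_X + \varepsilon D \sim_\Q \varepsilon D$, so the $\Q$-line bundle $K_X + \varepsilon D$ is nef (its Bott-Chern class is $\varepsilon\{\sL\}$) and has Kodaira dimension at least zero. Moreover, $\varepsilon T_{\min}$ is a closed positive current in $\{\pi^*(K_X + \varepsilon D)\} = \varepsilon\{\pi^*\sL\}$ with all Lelong numbers equal to zero. Applying Theorem~\ref{thm:GM} to the klt pair $(X, \varepsilon D)$ then yields that $K_X + \varepsilon D$ is semiample, hence so is $\varepsilon D$, and therefore so is $D$ itself. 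Since $\{D\} = \{\sL\}$, this is the desired semiample $\Q$-divisor.

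The argument is essentially a packaging of two already-established results. The only patching step requiring some care is verifying that $K_X \sim_\Q 0$ and not merely numerically trivial, which is immediate from the recent Kähler nonvanishing result in numerical dimension zero. Beyond this, I do not foresee any genuine obstacles: all hypotheses of Theorem~\ref{thm:GM} for the pair $(X,\varepsilon D)$ are verified by direct inspection, and the vanishing-Lelong-numbers assumption passes to $\varepsilon T_{\min}$ trivially.
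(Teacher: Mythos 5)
Your proposal is correct and follows essentially the same route as the paper, which deduces Corollary~\ref{cor:main2} exactly as Corollary~\ref{cor:main1}, by combining the nonvanishing statement (Theorem~\ref{thm:main2}, i.e.\ Theorem~\ref{thm:nu1}) with Theorem~\ref{thm:GM}. The details you supply -- obtaining $K_X\sim_\Q 0$ from nonvanishing in numerical dimension zero and passing to the klt pair $(X,\varepsilon D)$ so that Theorem~\ref{thm:GM} applies in dimension $4$ -- are precisely what the paper's ``similarly'' is meant to encode.
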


\begin{remark}\label{remark bb decomposition}
One might be tempted to try to prove Theorem~\ref{thm:main2} by using the singular Bogomolov--Beauville decomposition theorem from \cite{HP19,BGL22}. That result says that there exists a quasi-\'etale cover $\xi\colon\widetilde{X}\to X$ such that $\widetilde X$ splits into a product of irreducible Calabi--Yau varieties (which are automatically projective), hyperkähler varieties and a torus. Then we have $\nd(\xi^*\sL)=1$; \emph{however}, the condition $\chi(X,\sO_X)\neq 0$ does not in general imply that $\chi(\widetilde X,\sO_{\widetilde X})\neq 0$. If we had $\chi(\widetilde X,\sO_{\widetilde X})\neq 0$, then an easy argument as in \cite[Section 8]{LP18} would allow to reduce to the case where $\widetilde X$ has only one factor. Then $\widetilde{X}$ would either be a K3 surface (by the assumption on the numerical dimension), or a projective Calabi--Yau variety, and Theorem~\ref{thm:main2} would follow either from classical results or from \cite{LP18}.
\end{remark}

Finally, in the following result we recover \cite[Theorem 4.1(ii)]{Ver10}, which we will need in the proof of Theorem \ref{thm:main3}.

\begin{theorem}\label{thm:parabolic}
Let $X$ be a hyperkähler manifold. Let $\sL$ be a nef $\Q$-line bundle on $X$ such that $q_X(\sL,\sL)=0$. Assume that there exists a closed positive current $T\in \{\sL\}$ such that all Lelong numbers of $T$ are zero. Then $\kappa(X,\sL)\geq0$.
\end{theorem}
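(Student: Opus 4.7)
The plan is to apply Theorem~\ref{thm:nu1algsing} directly, using the Beauville--Bogomolov--Fujiki form as the good bilinear form. Since $X$ is smooth I may take $Y := X$ with $\pi := \mathrm{id}_X$, $\sM := \sL$, and $Q := q_X|_{H^{1,1}(X,\R)}$, which is good by Proposition~\ref{prop:twoexamples}(b). I then need to verify the hypotheses. As $X$ is hyperkähler, $K_X \sim 0$, so $X$ is not uniruled by Theorem~\ref{thm:Ou}, and $\sL \otimes \sO_X({-}K_X) \simeq \sL$ is nef and therefore pseudoeffective. Condition (a) of Theorem~\ref{thm:nu1algsing} is exactly the assumption $q_X(\sL,\sL) = 0$, and for (b), if $\dim X = 2n$ then $\chi(X,\sO_X) = n+1 \neq 0$.

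For condition (c), I use the given current $T \in \{\sL\}$. Since Lelong numbers scale linearly, all Lelong numbers of $mT$ vanish for every positive integer $m$, and Skoda's lemma then gives $\mathcal{I}(mT) = \sO_X$ for every such $m$. Hence condition (c) is satisfied with $G := 0$. Applying Theorem~\ref{thm:nu1algsing} produces a $\Q$-divisor $D \geq 0$ on $X$ with $\{\sL\} = \{D\}$ in $H^{1,1}(X,\R)$.

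To deduce $\kappa(X,\sL) \geq 0$ from this Bott--Chern identity, I would upgrade it to an isomorphism of line bundles after passing to a sufficiently divisible integer. Choose $k \in \N_{>0}$ so that $\sL^{\otimes k}$ and $\sO_X(kD)$ are honest line bundles. Since $X$ is simply connected and $h^{0,1}(X) = 0$ on a hyperkähler manifold, the exponential sequence yields an injection $\mathrm{Pic}(X) \hookrightarrow H^2(X,\Z)$. The class $c_1(\sL^{\otimes k}) - c_1(\sO_X(kD))$ vanishes in $H^2(X,\R)$, hence is torsion in $H^2(X,\Z)$; multiplying by its order yields a positive integer $M$ with $\sL^{\otimes Mk} \simeq \sO_X(MkD)$, and since $MkD \geq 0$ we obtain $\kappa(X,\sL) \geq 0$.

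All genuine difficulty is already encapsulated in Theorem~\ref{thm:nu1algsing}; the argument above is essentially a verification of its hypotheses together with a last topological step. The only mildly delicate point is the passage from the cohomological equality $\{\sL\} = \{D\}$ to an actual effective multiple of $\sL$, and this depends crucially on the vanishing $H^1(X,\Z) = H^1(X,\sO_X) = 0$ characteristic of hyperkähler manifolds.
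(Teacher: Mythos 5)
Your proposal is correct and follows essentially the same route as the paper: Skoda's lemma gives $\mathcal I(mT)=\sO_X$, and Theorem~\ref{thm:nu1algsing} is applied with the good bilinear form $q_X|_{H^{1,1}(X,\R)}$ from Proposition~\ref{prop:twoexamples}(b). Your explicit verification of the hypotheses (non-uniruledness via $K_X\sim 0$, $\chi(X,\sO_X)=n+1$) and the final passage from $\{\sL\}=\{D\}$ to an effective multiple via $h^1(X,\sO_X)=0$ only spells out what the paper leaves implicit.
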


\begin{proof}
By Skoda's lemma we have $\mathcal I(mT)=\sO_Y$ for all positive integers~$m$. Then we conclude by Theorem \ref{thm:nu1algsing} applied to the good bilinear form $q_X\vert_{H^{1,1}(X,\R)}\colon H^{1,1}(X,\R)\times H^{1,1}(X,\R)\to\R$, see Proposition~\ref{prop:twoexamples}.
\end{proof}

\section{Maximal Lelong components}\label{sec:Demailly}

In this section we prove Theorem \ref{thm:main3}. The proof is similar to that of \cite[Theorem 4.9]{LM23}, but we are able to bypass several approximation arguments by using a regularisation result from \cite{Dem92} directly, see Theorem \ref{thm:Dem92}.

We start by recalling the definition of Lelong components from \cite{Ver09}.

\begin{definition}\label{dfn:maxlelong}
Let $X$ be a compact Kähler manifold and let $T$ be a closed positive $(1,1)$-current on $X$. An irreducible analytic subset $Z\subseteq X$ is a \emph{Lelong component of $T$} if there exists $c_0>0$ such that $Z$ is an irreducible component of $E_c(T)$ for each $0 < c \leq c_0$. We say that $Z$ is a \emph{maximal Lelong component of $T$} if it is a Lelong component of $T$ of maximal dimension among all Lelong components.
\end{definition}

The following is the main technical result of this section.

\begin{theorem}  \label{thm:LelongHK}
Let $X$ be a hyperkähler manifold of dimension $2n$ and let $\alpha\in H^{1,1}(X,\R)$ be a nef class on $X$ with $q_X(\alpha, \alpha) = 0$. Let $T\in \alpha$ be a closed positive current and assume that not all Lelong numbers of $T$ are zero. Let $Z\subseteq X$ be a maximal Lelong component of $T$ and set $q:=\dim Z$. Then the following hold:
\begin{enumerate}[\normalfont (a)]
\item $\nd(\alpha)=n$ and $q\geq n$,
\item $(\alpha|_Z)^{q-n+1}=0$,
\item $Z$ is coisotropic with respect to the symplectic form,
\item if $q=n$, then $\alpha|_Z=0$ and $Z$ is Lagrangian,
\item if $T$ has minimal singularities, then there exists a positive integer $m_0$ such that for every $m\geq m_0$ and for every closed positive current $T_m \in \{m\alpha\}$, the complex subspace $V_m$ of $X$ defined by the multiplier ideal sheaf $\sI(T_m)$ has dimension at least $n$.
\end{enumerate}
\end{theorem}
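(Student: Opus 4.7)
The overall plan is to follow the strategy of \cite[Theorem 4.9]{LM23}, replacing their explicit approximation arguments by a direct invocation of Demailly's regularisation Theorem~\ref{thm:Dem92}. To start, I would prove the numerical dimension claim in (a) using only BBF technology: the Fujiki relation (Lemma~\ref{lem:Fujikirelation}) together with Corollary~\ref{cor:wedge} applied to $\alpha$ and a Kähler class of positive BBF-norm imply that $\alpha^{n+1}=0$ while $\alpha^n\neq 0$, giving $\nd(\alpha)=n$. The inequality $q\geq n$ will fall out of the wedge product argument below.

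The central construction is a regularisation $T_k\in\alpha+\varepsilon_k\omega$ of $T$, with $\omega$ a fixed Kähler class and $\varepsilon_k\downarrow 0$, obtained from Demailly's theorem. The currents $T_k$ have analytic singularities, and their Lelong numbers decrease pointwise to those of $T$. Since $Z$ is a maximal Lelong component of $T$, we have $\nu_k:=\nu(T_k,Z)>0$ for $k\gg 0$, and Lemma~\ref{lem:stongly1} makes $S_k:=T_k-\nu_k[Z]$ a closed strongly positive current. Because $T_k$ has analytic singularities, Lemma~\ref{lem:strongly2} ensures that $T_k^p$, $S_k^p$, and $[Z]\wedge S_k^{p-1}$ are well-defined strongly positive currents in the relevant range of~$p$.

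The heart of the proof is then the binomial expansion
$$T_k^{n+1}=(\nu_k[Z]+S_k)^{n+1}=S_k^{n+1}+(n+1)\nu_k\,[Z]\wedge S_k^n+\cdots,$$
in which every summand is strongly positive, and higher-order self-intersections of $[Z]$ are forced to be concentrated on lower-dimensional subsets of $Z$ by bidegree. Pairing with $\omega^{n-1}$, using the cohomological vanishing $\{T_k^{n+1}\}\to\alpha^{n+1}=0$, and sending $k\to\infty$, we obtain $\{Z\}\cdot\alpha^n\cdot\omega^{n-1}=0$, from which Lemma~\ref{lem:cohoclasses} together with a degree count yields both $q\geq n$ (completing (a)) and, after iterating with higher wedge powers of $[Z]$, the vanishing $\{Z\}\cdot\alpha^{q-n+1}=0$. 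An application of Lemma~\ref{lem:cohotozero} on a resolution of $Z$ then gives $(\alpha|_Z)^{q-n+1}=0$, proving (b). To prove (c), I would run the same wedge argument with factors of $\sigma+\bar\sigma$ in place of some of the $\alpha$ factors, exploiting that the analogous degree vanishing for $\sigma$ follows from Corollary~\ref{cor:wedge} (since $q_X(\sigma,\sigma)=0$ as well). After restriction to $Z$ and a rank analysis via Proposition~\ref{prop:rank}, one extracts $\sigma|_Z^{q-n+1}=0$, so $Z$ is coisotropic by Theorem~\ref{theorem:coisotropic-characterisation}. Part (d) is then immediate from (b), (c) and Remark~\ref{rem:coisotropic}.

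For part (e), I would exploit that if $T$ has minimal singularities with $\nu(T,Z)=\nu_0>0$, then by \eqref{eq:Tmin} every closed positive current $T_m\in\{m\alpha\}$ satisfies $\nu(T_m,Z)\geq m\nu_0$. Choosing $m_0$ so that $m_0\nu_0\geq 2n+1$, Skoda's lemma forces $\mathcal{I}(T_m)$ to be contained in a high power of the ideal sheaf of $Z$, so $V_m\supseteq Z$ and $\dim V_m\geq q\geq n$ by part (a). \emph{The main obstacle} in the plan will be the careful bookkeeping of positivity through the binomial expansion of $T_k^{n+1}$ and its higher-power analogues: ensuring that the cohomological limits are as claimed, and that the iterated argument for (b) genuinely yields vanishing of all powers up to $(\alpha|_Z)^{q-n+1}$. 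A secondary subtle point is the passage between $\alpha$ and $\sigma$ in the proof of (c), where the symmetry between the two vanishing conditions comes from Corollary~\ref{cor:wedge} and requires care with bidegree.
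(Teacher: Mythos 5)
Your overall strategy points in the right direction, and some pieces do coincide with the paper: $\nd(\alpha)=n$ is a known BBF/Fujiki fact (the paper simply cites \cite[Proposition 24.1]{GHJ03}), part (d) is a formal consequence of (b), (c) and Remark \ref{rem:coisotropic}, and your argument for (e) (minimal singularities, $\nu(T_m,Z)\geq m\nu(T,Z)$, Skoda) is exactly the paper's Corollary \ref{cor:maxLelong}(b). But the central construction you propose for $q\geq n$, (b) and (c) has a genuine gap. In the essential cases the maximal Lelong component $Z$ has codimension $p=2n-q\geq 2$, so $[Z]$ is a $(p,p)$-current: the difference $S_k:=T_k-\nu_k[Z]$ is not defined (the bidegrees do not match), and Lemma \ref{lem:stongly1} cannot be applied to the $(1,1)$-current $T_k$ -- it concerns a strongly positive $(p,p)$-current and a subvariety of codimension exactly $p$. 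Moreover the wedge powers you need do not exist: a regularisation $T_k$ with analytic singularities is singular along a set containing $Z$, so by Bedford--Taylor--Demailly (cf.\ the codimension hypothesis in Lemma \ref{lem:strongly2}) one can form at most $p$ factors of $T_k$, and $T_k^{n+1}$, with $n+1>p$, is not a well-defined current; the terms $[Z]\wedge[Z]\wedge\cdots$ in your binomial expansion are likewise undefined, and saying they are ``concentrated on lower-dimensional subsets by bidegree'' is not a substitute for a definition. Note also that Theorem \ref{thm:Dem92} is not an approximation statement producing such $T_k$: it is Demailly's intersection-type theorem which, for each $p$, produces a closed strongly positive $(p,p)$-current $\Theta_p$ in the class $\big(\{T\}+b_1\{m\omega\}\big)\cdots\big(\{T\}+b_p\{m\omega\}\big)$ dominating $\sum_k(\nu_{p,k}-b_1)\cdots(\nu_{p,k}-b_p)[Z_{p,k}]$.

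That theorem is precisely what replaces your expansion in the paper's proof. Since $Z$ is a maximal Lelong component, $b_1=\cdots=b_p=0$ for $p=\codim Z$, so $\Theta_p$ lies in the class $\alpha^p$ and satisfies $\Theta_p\geq\nu(T,Z)^p[Z]$; subtracting $[Z]$ is then legitimate at the level of $(p,p)$-currents by Lemma \ref{lem:stongly1}. The mass inequality of Theorem \ref{thm:Dem92}(b) gives $\alpha^p\neq 0$, hence $p\leq\nd(\alpha)=n$, i.e.\ $q\geq n$ (Corollary \ref{cor:maxLelong}(a)). For (b) one multiplies the cohomological identity $\{\Theta_p\}=\alpha^p$ by $\alpha^{n-p+1}$ and a suitable power of $\omega$, uses $\alpha^{n+1}=0$ and positivity of both $\nu(T,Z)^p[Z]$ and the residual current to obtain $\{Z\}\cdot\alpha^{n-p+1}\cdot\omega^{\bullet}=0$, and concludes with Lemmas \ref{lem:cohoclasses} and \ref{lem:cohotozero}; no current of bidegree beyond $(p,p)$ is ever formed. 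For (c) the paper wedges the single current $\Theta_p$ with $\rho=\sigma^{n-p+1}\wedge\overline{\sigma}^{\,n-p+1}\wedge\psi_1\wedge\cdots\wedge\psi_{p-2}$ for cut-off positive forms $\psi_i$, uses Corollary \ref{cor:wedge} and Lemma \ref{lem:cohotozero} to get $\Theta_p\wedge\rho=0$, hence $[Z]\wedge\rho=0$ and then $\sigma_Z^{\,n-p+1}=0$ pointwise on $Z_{\reg}$ via Theorem \ref{theorem:coisotropic-characterisation}; your sketch (``restriction to $Z$ and a rank analysis'') skips exactly this positivity step, which is where the pointwise vanishing on $Z$ comes from. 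To repair the proposal, drop the $T_k^{n+1}$ expansion and use Theorem \ref{thm:Dem92} as stated.
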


The main technical input in the proof of Theorem \ref{thm:LelongHK} follows from the circle of ideas surrounding Demailly's regularisation techniques from \cite{Dem92}. In order to state the result we use properly, we need the following definition from \cite[Section 7]{Dem92}.

\begin{definition}\label{dfn:Lelongmax}
Let $X$ be a compact Kähler manifold of dimension $n$ and let $T$ be a closed positive $(1,1)$-current on $X$. To the current $T$ we associate the sequence 
$$ 0=b_1\leq\dots\leq b_n\leq b_{n+1}$$ 
of real numbers such that for each $p\in\{1,\dots,n\}$ and each $c\in(b_p,b_{p+1}]$ we have $\dim E_c(T)=n-p$, and $E_c(T)=\emptyset$ for $c>b_{n+1}$. Notice that $b_{n+1}<\infty$, i.e.\ $E_c(T)=\emptyset$ for $c\gg0$, since $X$ is compact. Let $\{Z_{p,k}\}_{k\in I_p}$ be the collection of all $(n-p)$-dimensional irreducible components of all sets $E_c(T)$ for $c\in(b_p,b_{p+1}]$, and set $\nu_{p,k} := \nu(T,Z_{p,k})$ for each $p$ and $k$.
\end{definition}

\begin{remark}
With notation from Definition \ref{dfn:Lelongmax} and by the definition of the Lelong set $E_c(T)$, we have $\nu_{p,k} > b_p$ for each $p$ and $k$. Note also that we can have $b_p=b_{p+1}$: for instance, if $E_c(T)$ has codimension at least two for every $c>0$, then $b_2=b_1=0$.
\end{remark}

The following is a more precise version of \cite[Theorem 1.7]{Dem92}, see Remark \ref{rem:stronglypositive} below.

\begin{theorem}\label{thm:Dem92}
Let $X$ be a compact K\"ahler manifold of dimension $n$, let $\eta$ be a Kähler class on $X$, and let $T$ be a closed positive $(1,1)$-current on $X$. Then, using the notation from Definition~\ref{dfn:Lelongmax}, there exists a positive integer $m$ such that the following holds.
\begin{enumerate}[\normalfont (a)]
\item For each $p=1,\dots,n$, there exists a closed strongly positive $(p,p)$-current $\Theta_p$ in the cohomology class $\big(\{T\}+b_1 m\eta\big)\cdot\ldots\cdot\big(\{T\}+b_p m\eta\big)$ such that
$$\Theta_p\geq \sum_{k\geq1}(\nu_{p,k}-b_1)\cdot\ldots\cdot (\nu_{p,k}-b_p)[Z_{p,k}].$$
\item For each $p=1,\dots,n$ we have
\begin{align*}
\sum_{k\geq1}(\nu_{p,k}-b_1)\cdot\ldots\cdot &\, (\nu_{p,k}-b_p) \{Z_{p,k}\}\cdot\eta^{n-p}\\
&\leq \big(\{T\}+b_1 m\eta\big)\cdot\ldots\cdot\big(\{T\}+b_p m\eta\big)\cdot\eta^{n-p}.
\end{align*}
\end{enumerate}
\end{theorem}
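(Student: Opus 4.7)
The plan is to construct the $\Theta_p$ as wedge products of analytic approximations of $T$ at the critical levels $b_1,\ldots,b_p$, closely following the strategy of \cite[Section~7]{Dem92}. The starting ingredient is Demailly's analytic approximation: there exist a positive integer $m$ (depending only on the curvature of $(X,\omega)$) and, for every $c\geq 0$, a closed $(1,1)$-current $T_c$ with analytic singularities in the class $\{T\}+c\{m\omega\}$, with $T_c \geq 0$, whose global potential $\varphi_c$ has unbounded locus equal to $E_c(T)$ and satisfies $\nu(\varphi_c,x)=\max(\nu(T,x)-c,0)$ for every $x \in X$. I would fix such $m$ and currents $T_c$ once and for all.

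For each $p=1,\ldots,n$ and each sufficiently small $\varepsilon>0$, I would define
$$\Theta_p^{(\varepsilon)} := T_{b_1+\varepsilon} \wedge T_{b_2+\varepsilon} \wedge \cdots \wedge T_{b_p+\varepsilon}$$
by induction on the number of factors, using Lemma~\ref{lem:strongly2} at each step. By the definition of the sequence $b_j$, the set $E_{b_j+\varepsilon}(T)$ has dimension at most $n-j$, so the potential of $T_{b_j+\varepsilon}$ has unbounded locus of dimension $\leq n-j\leq n-(j-1)-1$, which is exactly the codimension hypothesis required by Lemma~\ref{lem:strongly2} to wedge the inductively constructed strongly positive $(j-1,j-1)$-current $\Theta_{j-1}^{(\varepsilon)}$ with $T_{b_j+\varepsilon}$. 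The outcome is a closed strongly positive $(p,p)$-current in the class $\prod_{j=1}^p(\{T\}+(b_j+\varepsilon)\{m\omega\})$, and weak compactness of bounded-mass positive currents yields a weak limit $\Theta_p$ as $\varepsilon \to 0^+$, still closed, strongly positive, and in the claimed cohomology class.

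For the lower bound, the key is that for $\varepsilon$ small enough and every $j\leq p$, the component $Z_{p,k}$ remains an irreducible component of $E_{b_j+\varepsilon}(T)$, and the generic Lelong number of $T_{b_j+\varepsilon}$ along $Z_{p,k}$ equals $\nu_{p,k}-b_j-\varepsilon$. The multiplicativity of generic Lelong numbers along a common subvariety for wedge products of currents with analytic singularities gives
$$\nu(\Theta_p^{(\varepsilon)}, Z_{p,k}) \geq \prod_{j=1}^p (\nu_{p,k}-b_j-\varepsilon),$$
so Lemma~\ref{lem:stongly1} allows us to subtract $\prod_j(\nu_{p,k}-b_j-\varepsilon)[Z_{p,k}]$ from $\Theta_p^{(\varepsilon)}$ while preserving strong positivity. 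Iterating over $k$ and passing to the limit $\varepsilon\to 0$ establishes (a). Part~(b) follows by wedging the inequality in (a) with the strongly positive form $\omega^{n-p}$ and integrating over $X$, the cohomological pairing producing the stated inequality. The hardest step I anticipate is the product estimate for generic Lelong numbers, which requires the precise form of the Demailly construction rather than a generic analytic approximation, so that the singularities of the $T_{b_j+\varepsilon}$ interact transversally along $Z_{p,k}$.
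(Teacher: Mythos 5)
Your proposal is correct and is essentially the paper's own argument: the paper fixes $m$ via the curvature of $\mathcal{O}_{\mathbb{P}(T_X)}(1)$ and then follows Demailly's proof of \cite[Theorem~1.7]{Dem92} — precisely the wedge-of-regularizations construction you outline, with limits over the approximation parameters — adding only that the intermediate currents $\Theta_{p,c,k}$ and their weak limits are \emph{strongly} positive by Lemma~\ref{lem:strongly2}, and that (b) follows from (a) by pairing with $\omega^{n-p}$ using Lemma~\ref{lem:stongly1}. The one step you single out as hardest, the bound $\nu\big(T_{b_1+\varepsilon}\wedge\cdots\wedge T_{b_p+\varepsilon},Z_{p,k}\big)\geq\prod_j(\nu_{p,k}-b_j-\varepsilon)$, requires no transversality of the singularities: it is Demailly's comparison theorem for Lelong numbers of wedge products (see \cite[Chapter~III, \S7]{Dem12a}), valid whenever the product is well defined, so your anticipated difficulty is in fact standard.
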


\begin{remark}\label{rem:stronglypositive}
In \cite[Theorem 1.7]{Dem92} it was shown that the current $\Theta_p$ is positive. However, we will need the stronger statement that it is even \emph{strongly positive}.
\end{remark}

\begin{proof}[Proof of Theorem \ref{thm:Dem92}]
We first show that (b) follows from (a). Indeed, fix a Kähler form $\omega\in\eta$. Since the form $\omega^{n-p}$ is strongly positive by \cite[Proposition III.1.11]{Dem12a}, and since the $(p,p)$-current
$$\Theta_p-\sum_{k\geq1}(\nu_{p,k}-b_1)\cdots(\nu_{p,k}-b_p)[Z_{p,k}]$$
is strongly positive by (a) and by Lemma \ref{lem:strongly1}, we have
$$\int_X\Theta_p\wedge\omega^{n-p}\geq \sum_{k\geq1}(\nu_{p,k}-b_1)\cdots(\nu_{p,k}-b_p)\int_X [Z_{p,k}]\wedge\omega^{n-p},$$
which is equivalent to (b).

We now prove (a). Consider the projective bundle $\pi\colon\mathbb P(T_X)\to X$ and set $\mathcal L:=\sO_{\mathbb P(T_X)}(1)$. Pick a positive integer $m$ such that the cohomology class $\{\mathcal L\}+m\pi^*\eta$ contains a K\"ahler form $\omega_\pi$. Then the form $\omega_\pi-\pi^*\omega$ defines a hermitian metric $h$ on $\mathcal L$ such that $\Theta_h(\mathcal L)+\pi^*\omega\geq0$.

We now follow the proof of \cite[Theorem 1.7]{Dem92} closely. The closed positive currents $\Theta_p$ defined in the proof of \cite[Theorem 1.7]{Dem92} satisfy the inequality in (a). It remains to show that  each $\Theta_p$ is strongly positive. We use the notation from that proof. If $p=1$, then the $(1,1)$-current $\Theta_1=T$ is strongly positive. If $p>1$, then the currents $\Theta_{p,c,k}$ in the proof of op.\ cit.\ are strongly positive by Lemma \ref{lem:strongly2}. Since the current $\Theta_p$ is defined as the weak limit of a some convergent subsequence in the family $\{\Theta_{p,c,k}\mid b_p<c\leq b_{p+1},\,k\geq1\}$, it is also strongly positive. This finishes the proof.
\end{proof}

As an immediate and possibly surprising corollary we have:

\begin{corollary}\label{cor:maxLelong}
Let $X$ be a compact Kähler manifold of dimension $n$ and let~$T$ be a closed positive $(1,1)$-current on $X$ such that the class $\{T\}$ is nef and $\nd\big(\{T\}\big)=d$. Assume that not all Lelong numbers of $T$ are zero.
\begin{enumerate}[\normalfont (a)]
\item Let $Z\subseteq X$ be a maximal Lelong component of $T$. Then $\dim Z\geq n-d$.
\item If $T$ has minimal singularities, then there exists a positive integer $m_0$ such that for every integer $m\geq m_0$ and for every closed positive current $T_m \in \{mT\}$, the complex subspace $V_m$ of $X$ defined by the multiplier ideal sheaf $\sI(T_m)$ has dimension at least $n-d$.
\end{enumerate}
\end{corollary}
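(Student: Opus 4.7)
The plan is to deduce both statements directly from Theorem~\ref{thm:Dem92}. Fix a K\"ahler form $\omega$ on $X$.

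For part (a), I argue by contradiction: suppose some maximal Lelong component $Z$ satisfies $\dim Z = n-p$ with $p>d$. The preliminary step is to show that in the notation of Definition~\ref{dfn:Lelongmax} one has $b_1=\cdots=b_p=0$. For this I establish the auxiliary claim that every irreducible component $W$ of $E_c(T)$, for any $c>0$, is contained in a Lelong component of $T$. To prove the claim, I build a strictly increasing chain $W=W_0\subsetneq W_1\subsetneq\cdots$ in which each $W_i$ is an irreducible component of some $E_{c_i}(T)$; since $\dim W_i$ strictly increases, the chain terminates at some $W_k$, and maximality forces $W_k$ to remain an irreducible component of $E_{c'}(T)$ for every $c'\in(0,c_k]$, i.e.\ a Lelong component. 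Given the claim, the assumption that all Lelong components have dimension at most $n-p$ implies $\dim E_c(T)\le n-p$ for all $c>0$, which by Definition~\ref{dfn:Lelongmax} yields $b_1=\cdots=b_p=0$.

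Inserting these vanishings into Theorem~\ref{thm:Dem92}(b) reduces it to
\[
\sum_{k\ge 1}\nu_{p,k}^{\,p}\,\{Z_{p,k}\}\cdot\{\omega\}^{n-p}\ \le\ \{T\}^p\cdot\{\omega\}^{n-p}.
\]
The right-hand side is zero because $\nd(\{T\})=d<p$ forces $\{T\}^p=0$ in cohomology. On the other hand, $Z$ appears as some $Z_{p,k_0}$, and its contribution $\nu(T,Z)^p\,\{Z\}\cdot\{\omega\}^{n-p}$ is strictly positive, since $\nu(T,Z)>0$ (as $Z$ is a Lelong component) and $\int_Z\omega^{n-p}>0$ (as $\omega$ is K\"ahler and $\dim Z = n-p$). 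This contradiction proves (a).

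For part (b), use (a) to fix a maximal Lelong component $Z$ of $T$ with $\dim Z\ge n-d$, and set $c_0:=\nu(T,Z)>0$ and $m_0:=\lceil n/c_0\rceil$. Since $T$ has minimal singularities in $\{T\}$, the current $mT$ has minimal singularities in $\{mT\}$, so by \eqref{eq:Tmin} every closed positive current $T_m\in\{mT\}$ satisfies $\nu(T_m,x)\ge m\,\nu(T,x)$ for all $x\in X$. For $x\in Z$ we have $\nu(T,x)\ge\nu(T,Z)=c_0$, so for $m\ge m_0$ we obtain $\nu(T_m,x)\ge m c_0\ge n$ throughout $Z$; Skoda's lemma then gives $\sI(T_m)_x\subseteq\mathfrak m_x$, hence $x\in V_m$. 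Thus $Z\subseteq V_m$ and $\dim V_m\ge\dim Z\ge n-d$.

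The main obstacle is the preliminary claim identifying the maximal dimension of Lelong components with the maximal dimension of components of $E_c(T)$ for small $c>0$, since this is what bridges the combinatorial bookkeeping of $b_i$'s and $Z_{p,k}$'s in Theorem~\ref{thm:Dem92} with the geometric notion of maximal Lelong component. Once that bridge is in place, everything else — the vanishing $\{T\}^{d+1}=0$ for (a), and the Lelong number inflation together with Skoda's lemma for (b) — is essentially mechanical.
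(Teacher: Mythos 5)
Your proof is correct and follows essentially the same route as the paper: Theorem~\ref{thm:Dem92}(b) with $b_1=\cdots=b_p=0$, strict positivity of the term coming from $Z$, and $\{T\}^{d+1}=0$ for (a), then scaling of Lelong numbers via minimal singularities and Skoda's lemma for (b). The only difference is that you spell out, via the increasing-chain argument, why every component of $E_c(T)$ sits inside a Lelong component, which correctly justifies the equality $b_1=\cdots=b_p=0$ that the paper asserts without further comment.
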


\begin{proof}
We first show (a). Set $p:=n-\dim Z$ and let $\eta$ be a Kähler class on $X$. Then, with notation from Definition~\ref{dfn:Lelongmax}, we have $b_1=\ldots=b_p=0$ since $Z$ is a maximal Lelong component. Thus, Theorem \ref{thm:Dem92}(b) implies
$$\sum_{k\geq1}\nu_{p,k}^p \{Z_{p,k}\}\cdot \eta^{n-p}\leq \{T\}^p\cdot\eta^{n-p}.$$
Since $Z$ is one of the components $Z_{p,k}$, the left hand side of this inequality is strictly positive by Lemma \ref{lem:cohoclasses}(b), hence so is the right hand side. As $\{T\}^{d+1}=0$, we obtain $p\leq d$.

To show (b), pick a positive integer $m_0$ such that $m_0\nu(T,Z)\geq n$. Then for any $m\geq m_0$ and any closed positive current $T_m \in \{mT\}$ we have
$$\nu(T_m,Z)\geq\nu(mT,Z)=m\nu(T,Z)\geq m_0\nu(T,Z)\geq n,$$
where the first inequality holds since $mT$ has minimal sin\-gu\-la\-ri\-ties. Hence, we have $Z\subseteq V_m$ by Skoda's lemma, and we conclude by (a).
\end{proof}

Now we can prove Theorem \ref{thm:LelongHK}.

\begin{proof}[Proof of Theorem \ref{thm:LelongHK}]
Part (d) follows immediately from (b) and (c) by Remark~\ref{rem:coisotropic}. We have $\alpha\neq0$ by Lemma \ref{lem:cohotozero}(b), since $T\neq0$; thus, $\nd(\alpha)=n$ by \cite[Proposition 24.1]{GHJ03}. Now (a) and (e) follow immediately from Corollary~\ref{cor:maxLelong}.

We prove the remaining statements in the following two steps. We fix a Kähler class $\eta$ on $X$.

\medskip

\emph{Step 1.}
In this step we show (b). Set $p:=2n-q$. Then, with notation from Definition~\ref{dfn:Lelongmax}, we have $b_1=\ldots=b_p=0$ since $Z$ is a maximal Lelong component. Thus, Theorem \ref{thm:Dem92}(a) and Lemma \ref{lem:strongly1} imply that there exists a closed strongly positive $(p,p)$-current $\Theta_p\in\alpha^p$ and a closed strongly positive $(p,p)$-current $R$ such that
\begin{equation}\label{eq:dominate}
\Theta_p=\nu(T,Z)^p[Z]+R.
\end{equation}
Since the class $\alpha$ is nef and $\alpha^{n+1}=0$ by (a), by \eqref{eq:dominate} we have
\begin{align*}
0&=\alpha^{n+1}\cdot\eta^{n-1}=\{\Theta_p\}\cdot \alpha^{n-p+1}\cdot\eta^{n-1}\\
&=\nu(T,Z)^p\big\{Z\big\}\cdot \alpha^{n-p+1}\cdot \eta^{n-1}+\{R\}\cdot \alpha^{n-p+1}\cdot\eta^{n-1}\geq0.
\end{align*}
In particular,
\begin{equation}\label{eq:67898}
\big\{Z\big\}\cdot \alpha^{n-p+1}\cdot \eta^{n-1}=0.
\end{equation}

Denote $\beta:=\alpha|_Z$ and $\xi:=\eta|_Z$. Let $\pi\colon \widetilde Z \to Z$ be a resolution which is a composition of the normalisation $Z^\nu \to Z$ and a resolution $\widetilde Z \to Z^\nu$ obtained as a sequence of blowups along smooth centres contained in the singular locus. Then there exists a $\pi$-exceptional $\R$-divisor $E\geq0$ on $\widetilde{Z}$ such that ${-}E$ is relatively ample over $Z^\nu$, hence $\widetilde{\xi}:=\pi^*\xi-\{E\}$ is a Kähler class on $\widetilde{Z}$. 

Let $j\colon Z\to X$ be the inclusion morphism and denote $\tau:=\pi\circ j$. Then $\{Z\}=\tau_*\{\widetilde Z\}_\mathrm{fund}$, where $\{\widetilde Z\}_\mathrm{fund}$ is the fundamental class of the manifold $\widetilde{Z}$, see \cite[\S11.1.4]{Voi02}. By \eqref{eq:67898} and by the projection formula, we have
$$\big\{\widetilde Z\big\}_\mathrm{fund}\cdot \tau^*\alpha^{n-p+1}\cdot \tau^*\eta^{n-1}=0,$$
which is equivalent to
\begin{equation}\label{eq:6789}
\pi^*\beta^{n-p+1}\cdot \pi^*\xi^{n-1}=0.
\end{equation}

We claim that for each integer $0\leq k\leq n-1$ we have
$$ \pi^*\beta^{n-p+1}\cdot \pi^*\xi^{n-1-k}\cdot \widetilde{\xi}^k = 0. $$
Indeed, for $k=0$ the claim follows from \eqref{eq:6789}. Assume inductively that the claim holds for some $k$. Then
\begin{align*}
0 &= \pi^*\beta^{n-p+1}\cdot \pi^*\xi^{n-1-k}\cdot \widetilde{\xi}^k = \pi^*\beta^{n-p+1}\cdot \pi^*\xi^{n-2-k}\cdot\big(\widetilde{\xi}+\{E\}\big)\cdot \widetilde{\xi}^k\\
&= \pi^*\beta^{n-p+1}\cdot \pi^*\xi^{n-2-k}\cdot\widetilde{\xi}^{k+1} + \pi^*\beta^{n-p+1}\cdot \pi^*\xi^{n-2-k}\cdot \{E\}\cdot \widetilde{\xi}^k.
\end{align*}
Since $\pi^*\beta$, $\pi^*\xi$ and $\widetilde{\xi}$ are nef, both of the summands above have to vanish, which proves the claim. In particular, for $k=n-1$ we obtain
\begin{equation}\label{eq resolution integral zero}
\pi^*\beta^{n-p+1}\cdot \widetilde{\xi}^{n-1} = 0.
\end{equation}
This gives $\pi^*\beta^{n-p+1}=0$ by Lemma \ref{lem:cohoclasses}(b), hence $\beta^{n-p+1}=0$ by Lemma~\ref{lemma mixed hodge lemma}, as desired.

\medskip

\emph{Step 2.}
In this step we show (c). If $p=1$, then (c) holds by \cite[Proposition~I.6.5]{LM87}. Thus, we may assume that $p\geq2$. Let $\sigma$ be a holomorphic symplectic form on $X$ and let us denote the pullback of $\sigma$ to $Z$ by $\sigma_Z$. For a smooth point $z$ of $Z$, we will show that $\sigma_Z(z)^{n-p+1} =0$, which is equivalent to $Z$ being coisotropic with respect to $\sigma$ by Theorem \ref{theorem:coisotropic-characterisation}.

To that end, let $\omega\in\eta$ be a Kähler form. By Corollary \ref{cor:wedge}, we have
\begin{equation}\label{eq:n-p}
\{T\}^p \cdot \{\sigma\}^{n-p+1} \cdot \{\overline{\sigma}\}^{n-p+1}\cdot\eta^{p-2}=0\in H^{2n,2n}(X, \R).
\end{equation}
Let $\Theta_p$ be as in Step 1, and recall that it is strongly positive. Then by \cite[Corollary III.1.16]{Dem12a}, the current $\Theta_p \wedge \omega^{p-2}$ is strongly positive. The form $\sigma^{n-p+1} \wedge \overline{\sigma}^{n-p+1}$ is positive by \cite[Example~III.1.2]{Dem12a}, hence, if we denote
$$\rho:=\sigma^{n-p+1} \wedge \overline{\sigma}^{n-p+1}\wedge \omega^{p-2},$$
the current $\Theta_p \wedge \rho$ is positive by \cite[Corollary III.1.16]{Dem12a}. Since we have
$$\{\Theta_p \wedge \rho\}=\{T\}^p \cdot \{\sigma\}^{n-p+1} \cdot \{\overline{\sigma}\}^{n-p+1}\cdot\eta^{p-2},$$
we conclude by \eqref{eq:n-p} and Lemma \ref{lem:cohotozero}(b) that
\begin{equation}\label{eq:768}
\Theta_p \wedge \rho=0.
\end{equation}
Since the current $R$ from Step 1 is strongly positive, we have again by \cite[Corollary III.1.16]{Dem12a} that
$$ R \wedge \rho \geq 0. $$
Similarly, since the current $[Z]$ is strongly positive by \cite[III.\S1.20 and Theorem III.2.7]{Dem12a}, we have
$$ [Z] \wedge \rho \geq 0. $$
Combining these two inequalities with \eqref{eq:dominate} and \eqref{eq:768} yields
$$0 = \Theta_p \wedge \rho = R \wedge \rho + \nu(T,Z)^p  [Z] \wedge \rho \geq 0.$$
Therefore, $[Z] \wedge \rho=0$ as in the proof of Lemma \ref{lem:pseflines}. Hence, 
$$\int_{Z_\reg}\rho=0,$$
where $Z_\reg$ is the smooth locus of $Z$. Since $\rho|_{Z_\reg}$ is a positive form of top degree on $Z_\reg$ by \cite[Criterion~III.1.6]{Dem12a}, we deduce that $\rho|_{Z_\reg}=0$, thus $(\sigma^{n-p+1} \wedge \overline{\sigma}^{n-p+1})|_{Z_\reg}=0$ by Lemma \ref{lem:cohotozero}(a). We thus have for any $z\in Z^\reg$ that
$$\sigma_Z(z)^{n-p+1} \wedge \overline{\sigma}_Z(z)^{n-p+1}=0$$
as forms on $T_{Z,z}$. But then an easy calculation yields $\sigma_Z(z)^{n-p+1} =0$, as desired.
\end{proof}

\begin{proof}[Proof of Theorem \ref{thm:main3}]
Let $T\in\{\sL\}$ be a closed positive current. If all Lelong numbers of $T$ are zero, then $\kappa(X,\sL)\geq0$ by Theorem \ref{thm:parabolic}. Thus, we may assume that not all Lelong numbers of $T$ are zero, and let $Z\subseteq X$ be a maximal Lelong component of $T$. Then $\dim Z\geq n$ by Theorem \ref{thm:LelongHK}(a).

Assume first that $Z$ is a divisor on $X$. As $T-\nu(T,Z)\cdot Z\geq0$ in this case, there exists an effective $\Q$-divisor in $c_1(\sL)$ by Theorem \ref{thm:nu1a} applied to the good bilinear form $q_X|_{H^{1,1}(X,\R)}$. Since $h^1(X,\sO_X)=0$, this implies that $\kappa(X,\sL)\geq0$. Therefore, we may assume that $\dim Z\leq2n-2$. Then we conclude by Theorem~\ref{thm:LelongHK}.
\end{proof}

Corollary \ref{cor:main3} follows immediately from Theorems \ref{thm:parabolic} and \ref{thm:GM}. 

\begin{remark}
Analogues of Theorem~\ref{thm:LelongHK} and Theorem~\ref{thm:main3} are likely to hold in the more general context of primitive symplectic varieties in the sense of \cite[Definition~3.1]{BL22}. However, it is not clear how to adapt proofs (or maybe even the statements), especially concerning the use of good bilinear forms or regularisation techniques. Working on a resolution would require a much more detailed analysis and is postponed to a future work.
\end{remark}

\bibliographystyle{amsalpha}
\bibliography{biblio}

\end{document}